\documentclass[a4paper,11pt]{amsart}
\usepackage[utf8x]{inputenc}
\usepackage{amsmath,amssymb,amsthm}
\usepackage{graphics,color,graphicx}
\usepackage{colonequals}
\usepackage{textcomp}

\relpenalty=100

\newtheorem{dfn}{Definition}[section]
\newtheorem{thm}{Theorem}
\newtheorem{thmA}{Theorem}

\newtheorem{thmB}{Theorem}

\newtheorem{thmC}{Theorem}

\newtheorem{prp}[dfn]{Proposition}
\newtheorem{lem}[dfn]{Lemma}
\newtheorem{cor}[dfn]{Corollary}
\newtheorem{quest}{Question}
\newtheorem{prb}{Problem}
\newtheorem{reform}{Reformulation}

\theoremstyle{definition}
\newtheorem{exl}[dfn]{Example}
\newtheorem{rem}[dfn]{Remark}

\newcommand{\R}{{\mathbb R}}
\newcommand{\RP}{{\mathbb R}\mathrm{P}}

\renewcommand{\phi}{\varphi}

\newcommand{\GL}{\operatorname{GL}}

\newcommand{\Proj}{\operatorname{Proj}}
\newcommand{\Aff}{\operatorname{Aff}}
\newcommand{\conv}{\operatorname{conv}}
\newcommand{\inn}{\operatorname{int}}

\newcommand{\grad}{\operatorname{grad}}
\newcommand{\bc}{\gamma}
\newcommand{\vol}{\operatorname{vol}}
\newcommand{\ce}{\colonequals}
\newcommand{\crra}{\operatorname{cr}}
\newcommand{\cone}{\operatorname{cone}}
\newcommand{\const}{\mathrm{const}}
\newcommand{\Ort}{\operatorname{O}}
\newcommand{\dist}{\operatorname{dist}}
\newcommand{\diam}{\operatorname{diam}}
\newcommand{\width}{\operatorname{width}}

\title{Fitting centroids by a projective transformation}
\author{Ivan Izmestiev}
\address{Institut f\"ur Mathematik \\
Freie Universit\"at Berlin \\
Arnimallee 2 \\
D-14195 Berlin, Germany}
\email{izmestiev@math.fu-berlin.de}
\thanks{Supported by the European Research Council under the European Union's Seventh Framework Programme (FP7/2007-2013)/\allowbreak ERC Grant agreement no.~247029-SDModels}

\begin{document}

\begin{abstract}
Given two subsets of $\R^d$, when does there exist a projective transformation that maps them to two sets with a common centroid? When is this transformation unique modulo affine transformations? We study these questions for $0$- and $d$-dimensional sets, obtaining several existence and uniqueness results as well as examples of non-existence or non-uniqueness.

If both sets have dimension $0$, then the problem is related to the analytic center of a polytope and to polarity with respect to an algebraic set. If one set is a single point, and the other is a convex body, then it is equivalent by polar duality to the existence and uniqueness of the Santal\'o point. For a finite point set versus a ball, it generalizes the M\"obius centering of edge-circumscribed convex polytopes and is related to the conformal barycenter of Douady-Earle. If both sets are $d$-dimensional, then we are led to define the Santal\'o point of a pair of convex bodies. We prove that the Santal\'o point of a pair exists and is unique, if one of the bodies is contained within the other and has Hilbert diameter less than a dimension-depending constant. The bound is sharp and is obtained by a box inside a cross-polytope.
\end{abstract}

\maketitle

\section{Introduction}
\subsection{The setup}
This work arose from the following question:
\begin{quote}
Given a convex polytope $P$ in $\R^d$ and a point $q$ inside $P$, does there exist a projective transformation $\phi$ such that $\phi(q)$ is the centroid of the vertices of $\phi(P)$?
\end{quote}
For example, if $P$ is a $d$-simplex, then as $\phi$ one can take the projective transformation that fixes the vertices of $P$ and maps $q$ to the centroid of $P$.

The question can be generalized as follows:
\begin{quest}
\label{quest:1}
Given two subsets $K_1, K_2 \subset \R^d$, does there exist a projective transformation $\phi \colon \RP^d \to \RP^d$ such that the centroids of $\phi(K_1)$ and $\phi(K_2)$ coincide?
\end{quest}



The centroid $\gamma(K)$ can be defined (by the usual integral formula) for any subset $K \subset \R^d$ that has a positive finite $k$-Hausdorff measure for some $0 \le k \le d$, see e.~g.~\cite{KMP06}. Thus, any of the sets $K_i$ in Question \ref{quest:1} may be, say, a convex polytope or the $k$-skeleton of a convex polytope.

If $\dim K = 0$ or $\dim K = d$, then the centroid of $K$ is affinely covariant:
\begin{equation}
\label{eqn:AffCovar}
\bc(\psi(K)) = \psi(\bc(K)) \quad \forall \psi \in \Aff(d)
\end{equation}
so that we can quotient out affine transformations when searching for $\phi$. Since $\dim\Proj(d) - \dim\Aff(d) = d$, the ``number of equations'' becomes equal to the ``number of variables'', and the following question poses itself.

\begin{quest}
\label{quest:2}
If $\dim(K_i) \in \{0, d\}$ in Question \ref{quest:1}, then is $\phi \in \Proj(d)$ such that $\bc(\phi(K_1)) = \bc(\phi(K_2))$ unique up to post-composition with an affine transformation?
\end{quest}

For example, if $K_1$ is the vertex set of a simplex, and $K_2$ is a single point, then $\phi$ is unique in the above sense.

For $\dim(K) \notin \{0, d\}$ the centroid is in general not affinely covariant. Indeed, it is well-known that the centroid of the boundary of a triangle coincides with the centroid of its vertices if and only if the triangle is regular. As the centroid of the vertices is affinely covariant, the centroid of the boundary is not. See \cite{KMP06} for centroids of skeleta of simplices in higher dimensions.


Let us discuss some restrictions we will impose on $K_i$ and $\phi$. First, $K$ is always assumed to be compact and equal to the closure of its interior. The latter is not really a restriction, since replacing $K$ by the closure of $\inn K$ doesn't change the centroid.

Second, we will always assume one set to be contained in the convex hull of the other: $K_2 \subset \conv K_1$. Although this looks quite restrictive, it leaves enough room for non-trivial results. For an idea of what can be done in the case when the convex hulls are incomparable, see Section \ref{sec:FutRes}.

Third, in order for the centroid of $\phi(K)$ to be defined, no point of $K$ may be sent to infinity for $\dim K = 0$ and $\vol(\phi(K) \cap \R^d) < \infty$ must hold for $\dim K = d$. The following restriction (together with compactness of $K$) guarantees both.

\begin{dfn}
\label{dfn:Admissible}
Let $K \subset \R^d \subset \RP^d$. A projective transformation $\phi \colon \RP^d \to \RP^d$ is called \emph{admissible} for $K$, if $\phi(\conv(K)) \subset \R^d$.
\end{dfn}

Non-admissible projective transformations are more difficult to handle; besides, admissible transformations will often suffice. If we allow a projective transformation to send to infinity a hyperplane that separates the points $p_1, \ldots, p_n$, then we can lose the uniqueness, see Proposition \ref{prp:NonAdm}.

The requirement that $K$ is equal to the closure of its interior forbids $K$ to have ``antennas''. It turns out, we heed to forbid ``horns'' in order to ensure the existence of a suitable projective transformation.

%

\begin{dfn}
A $d$-dimensional compact subset $K \subset \R^d$ is called \emph{cusp-free}, if for every $x \in K \cap \partial \conv K$ there is a $d$-simplex contained in $K$ with a vertex at $x$.
\end{dfn}

Examples of cusp-free sets are: pure $d$-dimensional polyhedra (finite unions of convex $d$-dimensional polyhedra); $d$-submanifolds of $\R^d$ with smooth boundary; $d$-submanifolds with corners.

%

\subsection{Making a given point to the centroid of a set}
Here we present our results in the case when $K_2 = \{q\}$ is a single point.

\begin{thm}
\label{thm:OneVsMany}
Let $K = \{p_1, \ldots, p_n\} \subset \R^d$ be a finite set of points affinely spanning $\R^d$, and let $q \in \inn \conv(K)$ be a point in the interior of their convex hull. Then there exists a projective transformation $\phi \colon \RP^d \to \RP^d$, admissible with respect to $K$, such that
\[
\frac{\phi(p_1) + \cdots + \phi(p_n)}n = \phi(q)
\]
If $\psi$ is any other admissible projective transformation with $\bc(\psi(K)) = \psi(q)$, then $\phi \circ \psi^{-1}$ is an affine transformation.
\end{thm}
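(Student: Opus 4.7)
The plan is to recast the problem as a strictly concave maximization. First, I would lift to $\R^{d+1}$ by embedding $\R^d$ as $\{x_{d+1}=1\}$ with $\tilde p_i \ce (p_i,1)$ and $\tilde q \ce (q,1)$; a projective map $\phi$ lifts to some $A \in \GL(d+1)$ acting by $p \mapsto A\tilde p/\xi(\tilde p)$, where $\xi \ce A^\top e_{d+1}$ is the last row of $A$. Admissibility is equivalent to $\xi(\tilde p_i)>0$ for all $i$, and two such $A$ give projective maps differing by post-composition with an affine map iff their associated $\xi$'s agree up to positive scaling (since affine maps fix the hyperplane at infinity). Hence admissible $\phi$ modulo $\Aff(d)$ is parametrized by the positive rays in the open cone $\Xi \ce \{\xi \in (\R^{d+1})^* : \xi(\tilde p_i)>0 \ \forall i\}$.

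Next I would rewrite the centroid equation. Left-multiplying $\frac1n \sum_i \phi(p_i) = \phi(q)$ by $A^{-1}$ yields
\[
\sum_{i=1}^n \frac{\tilde p_i}{\xi(\tilde p_i)} = n\,\frac{\tilde q}{\xi(\tilde q)}.
\]
Normalize $\xi$ by $\xi(\tilde q) = 1$, so $\xi$ ranges over the affine slice $\Omega \ce \Xi \cap \{\xi(\tilde q)=1\}$, and consider $F(\xi) \ce \sum_{i=1}^n \log \xi(\tilde p_i)$. Since $\nabla F(\xi) = \sum_i \tilde p_i/\xi(\tilde p_i)$ and pairing the Lagrange condition $\nabla F = \lambda \tilde q$ with $\xi$ forces $\lambda = n$, the centroid condition is exactly that $\xi$ is a critical point of $F|_\Omega$.

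The whole problem then reduces to showing that $F|_\Omega$ has a unique critical point. The Hessian $-\sum_i \xi(\tilde p_i)^{-2}\, \tilde p_i \tilde p_i^\top$ is negative definite because $\{\tilde p_i\}$ linearly spans $\R^{d+1}$ (as $K$ affinely spans $\R^d$), so $F$ is strictly concave on $\Omega$. For boundedness, since $q \in \inn\conv K$ one can write $\tilde q = \sum_i a_i \tilde p_i$ with $a_i>0$ and $\sum a_i =1$, and then $1 = \xi(\tilde q) = \sum a_i \xi(\tilde p_i)$ forces $0 < \xi(\tilde p_i) < 1/a_i$ on $\Omega$, which bounds $\|\xi\|$ via the spanning property. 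For boundary behavior, any point of $\partial\Omega$ therefore lies in $\R^{d+1}$ and has $\xi(\tilde p_i) \to 0$ for some $i$, so $F\to -\infty$ there. A strictly concave function on a bounded convex open set tending to $-\infty$ at the boundary attains its maximum at a unique interior critical point $\xi^*$, which yields a unique admissible $\phi$ modulo $\Aff(d)$.

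The main technical obstacle is the bookkeeping in the first step: correctly identifying the quotient of admissible projective transformations by $\Aff(d)$ with positive rays in $\Xi$ and translating the centroid equation into the clean Lagrangian form above. Once this dictionary is set up, strict concavity, boundedness of $\Omega$, and the boundary behavior of $F$ are routine, and produce existence and uniqueness in a single stroke.
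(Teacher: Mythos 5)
Your proposal is correct and takes essentially the same route as the paper: the paper's main proof normalizes $q=0$ and maximizes the strictly concave function $\sum_i\log(1+\langle p_i,y\rangle)$ over the bounded domain $\inn P^\circ$, which is exactly your $F|_\Omega$ in the affine chart $\xi=(y,1)$, and the paper's Section 3.2 spells out precisely your homogeneous version (concavity of the lifted functional on the slice $\{\xi(\tilde q)=1\}$, with critical points characterized by the Lagrange condition).
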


A projective transformation modulo post-composition with affine ones is uniquely determined by the hyperplane $\ell \subset \R^d$ that it sends to infinity. Associate to every $\ell$ the point $q$ that becomes the centroid of $\{p_i\}$ after $\ell$ is sent to infinity. Then, by Theorem \ref{thm:OneVsMany}, the hyperplanes disjoint from $\conv\{p_1, \ldots, p_n\}$ are in one-to-one correspondence with the points inside the convex hull. This correspondence is related to the polarity with respect to an algebraic set. Namely, let $A$ be the union of the hyperplanes dual to $\{p_i\}$; then the dual of $q$ is the polar with respect to~$A$ of the dual of~$\ell$. See Proposition \ref{prp:Polarity} for more details.

On the other hand, there is a relation to the analytic center of a polytope and the Karmarkar's algorithm, \cite{BL89}.

\begin{thm}
\label{thm:OneVsBody}
Let $K \subset \R^d$ be a compact cusp-free $d$-dimensional set, and $q \in \inn\conv(K)$. Then there exists a projective transformation $\phi \colon \RP^d \to \RP^d$, admissible with respect to $K$, such that
\[
\bc(\phi(K)) = \phi(q)
\]
For any other admissible projective transformation $\psi$ with this property, the composition $\phi \circ \psi^{-1}$ is affine.
\end{thm}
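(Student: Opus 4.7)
The plan is variational: parametrize admissible projective transformations modulo affine equivalence by a bounded open convex set $U \subset \R^d$, exhibit a strictly convex functional on $U$ whose critical points are exactly the configurations with $\bc(\phi(K)) = \phi(q)$, and show the functional diverges on $\partial U$. By the affine covariance \eqref{eqn:AffCovar} I may normalize $q = 0$, and by post-composition restrict to admissible $\phi$ fixing the origin. Every such $\phi$ has the form $\phi(x) = Ax/\ell_a(x)$ with $A \in \GL(d)$ and $\ell_a(x) \ce 1 + \langle a, x\rangle$ positive on $\conv K$; take
\[
U \ce \{a \in \R^d : \ell_a > 0 \text{ on } \conv K\},
\]
which is open, convex, bounded (since $0 \in \inn\conv K$), and contains $0$. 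A Jacobian computation gives $|\det D\phi(x)| = |\det A|/\ell_a(x)^{d+1}$, so $\bc(\phi(K)) = \phi(0)$ reduces, independently of $A$, to the single vector equation
\[
\int_K \frac{x\,dx}{\ell_a(x)^{d+2}} = 0.
\]

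Consider the functional
\[
F(a) \ce \frac{1}{d+1}\int_K \frac{dx}{\ell_a(x)^{d+1}}, \qquad a \in U.
\]
Differentiation under the integral sign gives $\nabla F(a) = -\int_K x\,\ell_a(x)^{-(d+2)}\,dx$, so critical points of $F$ are exactly the solutions sought. The Hessian
\[
\mathrm{Hess}\,F(a) = (d+2)\int_K \frac{xx^T\,dx}{\ell_a(x)^{d+3}}
\]
is the second-moment matrix of a positive measure with $d$-dimensional support, hence positive definite; so $F$ is strictly convex on $U$.

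For the boundary behavior, suppose $a_n \to a_* \in \partial U$. Then the hyperplane $\{\ell_{a_*} = 0\}$ supports $\conv K$; pick an extreme point $x_0$ of $\conv K$ lying on this hyperplane. Extreme points of the convex hull of a compact set lie in the set, so $x_0 \in K$, and cusp-freeness furnishes a $d$-simplex $\Delta \subset K$ with vertex $x_0$. Working in coordinates $(t, y) \in \R \times \R^{d-1}$ adapted to $\ell_{a_*}$ near $x_0$, with $\ell_{a_*} = t$ and the cross-section $\Delta \cap \{t = s\}$ of $(d-1)$-volume $\asymp s^{d-1}$ for small $s > 0$,
\[
\int_\Delta \ell_{a_*}(x)^{-(d+1)}\,dx \asymp \int_0^\varepsilon \frac{s^{d-1}}{s^{d+1}}\,ds = +\infty,
\]
and Fatou gives $F(a_n) \to +\infty$. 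A strictly convex function on a bounded open convex set that blows up at the boundary attains a unique minimum in the interior, proving both existence and uniqueness of the critical point and hence of $\phi$ modulo affine post-composition.

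The main obstacle is the boundary blowup, where cusp-freeness is essential: without it, a sharp horn of $K$ meeting a supporting hyperplane of $\conv K$ could contribute too little mass near that point for the integral to diverge, and the infimum of $F$ might be finite yet unattained in $U$. The remaining steps --- reduction to $U$, Jacobian, gradient, and Hessian --- are routine differentiation under the integral sign.
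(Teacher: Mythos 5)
Your proof is correct and follows essentially the same route as the paper's: the paper reformulates the problem via cone sections, observes that the desired sections are exactly the critical points of the volume functional (Proposition \ref{prp:Lever}), proves coercivity near $\partial C^*$ by the very same cusp-freeness/simplex divergence argument (Lemma \ref{lem:InfinityBody}), and concludes by strict convexity. The only notable difference is that the paper works with $\frac{1}{d+1}\log \vol(C_y)$ in homogeneous coordinates and needs a Cauchy--Schwarz step to get strict convexity (Proposition \ref{prp:ConvexBody}), whereas on your affine slice the plain volume $\frac{1}{d+1}\int_K \ell_a(x)^{-(d+1)}\,dx$ has a manifestly positive-definite Hessian.
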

Since projective transformations can be represented by central projections (Section \ref{sec:ConeSec}), Theorem \ref{thm:OneVsBody} can be reformulated as existence and uniqueness of a hyperplane section of a cone through a given point having this point as the centroid. Representing projective transformations as composition of two polarities, we can relate Theorem \ref{thm:OneVsBody} in the case of convex $K$ to the Santal\'o point: the hyperplane that must be sent to infinity is dual to the Santalo point of the dual of $K$. See Theorems \ref{thm:OneVsBodyB}  and \ref{thm:OneVsBodyC} in Section \ref{sec:ConeSecSant}.

If the point $q$ lies sufficiently close to a sufficiently sharp cusp of $K$, then there is no projective transformation making $q$ to the centroid of $K$. See Example \ref{exl:CounterPointBody}.

\begin{figure}[ht]
\begin{center}
\includegraphics{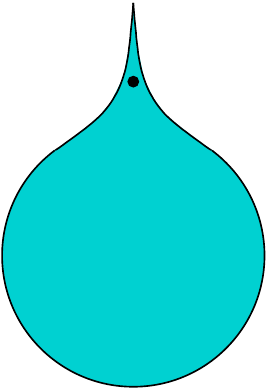}
\end{center}
\caption{A point close to a sharp cusp cannot become centroid.}
\end{figure}

\subsection{One of the sets is finite}
Here we present the results in the case when one of the sets is finite but consists of more than one point.
\begin{thm}
\label{thm:ManyVsMany}
Let $K_1 = \{p_1, \ldots, p_n\} \subset \R^d$ and $K_2 = \{q_1, \ldots, q_m\}$ be such that $K_2 \subset \inn\conv(K_1)$. Then there exists a projective transformation $\phi$, admissible with respect to $K_1$ (and hence with respect to $K_2$), such that $\bc(\phi(K_1)) = \bc(\phi(K_2))$.

In general, $\phi$ is not unique, even up to post-composition with affine transformations.
\end{thm}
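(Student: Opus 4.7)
The plan is to reduce existence to Theorem \ref{thm:OneVsMany} by a Brouwer fixed-point argument whose well-posedness hinges on the containment $K_2 \subset \inn\conv(K_1)$.

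For each $q \in \inn\conv(K_1)$, Theorem \ref{thm:OneVsMany} furnishes an admissible projective transformation $\phi_q$, unique up to post-composition with an affine map, such that $\bc(\phi_q(K_1)) = \phi_q(q)$. By affine covariance \eqref{eqn:AffCovar} of the centroid, the formula
\[
G(q) \ce \phi_q^{-1}\bigl(\bc(\phi_q(K_2))\bigr)
\]
does not depend on the choice of representative in the affine class of $\phi_q$, so $G\colon\inn\conv(K_1)\to\R^d$ is well-defined. Since $\phi_q$ is admissible for $K_1$, it is admissible for $K_2\subset\conv(K_1)$; viewing it as a rescaled linear map on the cone in $\R^{d+1}$ shows that admissible projective transformations commute with taking convex hulls, so $\phi_q(\conv(K_2))=\conv(\phi_q(K_2))$. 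Hence $\bc(\phi_q(K_2))\in\phi_q(\conv(K_2))$ and $G(q)\in\conv(K_2)\subset\inn\conv(K_1)$, so $G$ restricts to a self-map of the compact convex set $\conv(K_2)$. Continuity of $q\mapsto\phi_q\pmod{\Aff(d)}$, and hence of $G$, follows from the uniqueness part of Theorem \ref{thm:OneVsMany}: one parametrizes the affine class of $\phi_q$ by the hyperplane $\phi_q^{-1}(\infty)$ and inverts the smooth bijection $\ell\mapsto q$ guaranteed by that theorem, invoking the implicit function theorem as needed.

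Brouwer's fixed-point theorem applied to $G|_{\conv(K_2)}$ now yields $q^*\in\conv(K_2)$ with $G(q^*)=q^*$, which unwinds to $\bc(\phi_{q^*}(K_1))=\phi_{q^*}(q^*)=\bc(\phi_{q^*}(K_2))$; thus $\phi\ce\phi_{q^*}$ solves the problem. For non-uniqueness we exhibit an explicit example: unlike in the one-point case of Theorem \ref{thm:OneVsMany}, no single convex functional governs the problem, so there is no structural reason for the fixed point of $G$ to be unique, and a concrete configuration in the plane produces several solutions inequivalent modulo $\Aff(d)$. The main obstacle is thus verifying well-posedness of the map $G$---that it lands inside $\conv(K_2)$ and is continuous---after which Brouwer does the work; non-uniqueness is a matter of writing down one specific counterexample rather than a structural argument.
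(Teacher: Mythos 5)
Your existence argument is correct but follows a genuinely different route from the paper. The paper works in Reformulation \ref{ref:A}: it writes the centroid condition as the vanishing of the gradient of the functional $\frac1n\sum_i\log(1+\langle p_i,y\rangle)-\frac1m\sum_j\log(1+\langle q_j,y\rangle)$ on $\inn(\conv K_1)^\circ$, observes that the hypothesis $K_2\subset\inn\conv(K_1)$ forces this functional to tend to $-\infty$ at the boundary (the first sum blows up while the second stays bounded because $(\conv K_1)^\circ\subset\inn(\conv K_2)^\circ$), and takes a maximizer. You instead bootstrap from Theorem \ref{thm:OneVsMany} via Brouwer: the self-map property $G(\conv K_2)\subset\conv K_2$ is exactly right, since $\bc(\phi_q(K_2))\in\conv(\phi_q(K_2))=\phi_q(\conv K_2)$ for admissible $\phi_q$, and the fixed-point identity unwinds correctly. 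The one step you should tighten is continuity of $q\mapsto\phi_q\pmod{\Aff(d)}$: the implicit function theorem applies because the Hessian of the functional \eqref{eqn:F1} is negative definite (Lemma \ref{lem:Conv1}), so the equation $\grad_y\sum_i\log(1+\langle p_i-q,y\rangle)=0$ has a nondegenerate Jacobian in $y$; alternatively, invariance of domain makes the continuous bijection $y\mapsto q$ a homeomorphism. What each approach buys: the variational proof is self-contained and exhibits the solution as a maximizer of an explicit functional (which the paper's subsequent remark exploits to weaken the containment hypothesis), while your topological proof is softer and reuses the one-point theorem as a black box. Finally, the non-uniqueness assertion is not proved by your text: saying that ``no single convex functional governs the problem'' is not evidence, and you must actually produce a configuration. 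The paper does this with $K_1=\{-1,0,1\}$, $K_2=\{\pm 3/\sqrt{13}\}$ on the line (Example \ref{exl:CounterManyMany1}), where both the identity and $x\mapsto x/(3-x)$ fit the centroids; without such an explicit example that half of the statement remains unestablished.
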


For non-uniqueness, see Examples \ref{exl:CounterManyMany1} and \ref{exl:CounterManyMany2}.

\begin{thm}
\label{thm:ManyVsBody}
Let $K \subset \R^d$ be a compact cusp-free $d$-dimensional set, and let $p_1, \ldots, p_n \in \conv K$ be such that every support hyperplane of $K$ contains less than $\frac{n}{d+1}$ of the points $p_1, \ldots, p_n$.
Then there exists a projective transformation $\phi$ such that
\begin{equation}
\label{eqn:PhiBodyPoints}
\bc(\phi(K)) = \frac1{n} \sum_{i=1}^n \phi(p_i)
\end{equation}
In general, $\phi$ is not unique, even modulo affine transformations.
\end{thm}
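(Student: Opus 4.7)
The plan is to run the variational scheme used in Theorem~\ref{thm:OneVsBody}, now with a functional that combines the volume integral for $K$ with a discrete sum over the $p_i$.

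After a translation, assume $0 \in \inn \conv K$, and parametrize admissible projective transformations modulo affine by $a \in U \ce \{a \in \R^d : \langle a, x\rangle < 1 \text{ for all } x \in \conv K\}$ via $\phi_a(x) = x/(1-\langle a, x\rangle)$. The set $U$, being the open polar of $\conv K$, is bounded, convex and open. Define
\[
F(a) \ce \log\vol(\phi_a(K)) + \frac{d+1}{n}\sum_{i=1}^n \log\bigl(1-\langle a, p_i\rangle\bigr).
\]
Using $\vol(\phi_a(K)) = \int_K (1-\langle a,x\rangle)^{-(d+1)}\,dx$, a direct computation gives
\[
\tfrac{1}{d+1}\,\nabla F(a) = \bc(\phi_a(K)) - \tfrac{1}{n}\sum_{i=1}^n \phi_a(p_i),
\]
so critical points of $F$ in $U$ satisfy equation~\eqref{eqn:PhiBodyPoints}.

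The main step is then to show $F(a) \to +\infty$ as $a \to \partial U$. Let $D(a) \ce 1 - \max_{x \in \conv K}\langle a, x\rangle$, so $D > 0$ on $U$ and $D = 0$ on $\partial U$. Since $p_i \in \conv K$ gives $1-\langle a, p_i\rangle \ge D(a)$ and the $p_i$ off the contact hyperplane stay bounded away from~$0$,
\[
\sum_{i=1}^n \log(1-\langle a, p_i\rangle) \ge k(a_*)\,\log D(a) + O(1) \qquad (a \to a_* \in \partial U),
\]
where $k(a_*) = \#\{i : p_i \in \ell_{a_*}\}$. In the opposite direction, the cusp-free hypothesis supplies a $d$-simplex $T \subseteq K$ whose apex is an extreme point of $\conv K$ realizing $D(a)$; estimating $(1-\langle a,\cdot\rangle)^{-(d+1)}$ on the portion of $T$ within distance $\sim D(a)$ of its apex yields
\[
\log\vol(\phi_a(K)) \ge -\log D(a) + O(1),
\]
with constants that may be chosen uniform for $a$ in a neighborhood of any fixed $a_* \in \partial U$. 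Adding the two estimates,
\[
F(a) \ge \Bigl(\tfrac{(d+1)\,k(a_*)}{n} - 1\Bigr) \log D(a) + O(1).
\]
The hypothesis $k(a_*) < n/(d+1)$, together with integrality of $k$ and its upper semicontinuity on the compact set $\partial U$, makes the coefficient of $\log D(a)$ strictly and uniformly negative. Since $\log D(a) \to -\infty$, one obtains $F(a) \to +\infty$; hence $F$ has compact sublevel sets in~$U$ and attains its minimum at some $a \in U$, which gives the desired $\phi = \phi_a$.

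The principal obstacle lies in the uniform boundary blow-up: one must match the divergence $-\log D(a)$ from the volume integral---whose derivation requires the cusp-free geometry at every potential support point---with the contribution $-(d+1)k(a_*)/n \cdot \log(1/D(a))$ from the discrete sum, and the strict threshold $n/(d+1)$ is precisely what makes the geometric divergence win. Non-uniqueness is to be expected, since $\log(1-\langle a, p_i\rangle)$ is concave in $a$ while $\log\vol(\phi_a(K))$ is convex (by a H\"older/Pr\'ekopa--Leindler argument), so $F$ has Hessian of no definite sign in general; explicit counterexamples can be constructed along the lines of those used for Theorem~\ref{thm:ManyVsMany}.
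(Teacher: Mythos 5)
Your overall strategy is the same as the paper's (Section \ref{sec:ManyVsBody}): the same functional, the same gradient identity identifying solutions of \eqref{eqn:PhiBodyPoints} with critical points, and existence via coercivity at the boundary of the polar domain. The gap is in the coercivity estimate, specifically in the claim $\log\vol(\phi_a(K)) \ge -\log D(a) + O(1)$ ``with uniform constants''. The extreme point of $\conv K$ realizing $D(a)$ varies with $a$, and cusp-freeness supplies a $d$-simplex of $K$ at each such point but no uniform lower bound on its size; so the constant in your simplex estimate can degenerate as $a \to a_*$ (this is harmless when $K$ is convex or $\conv K$ is a polytope, but not for a general compact cusp-free $K$). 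If instead you fix one contact point $x^* \in K \cap \ell_{a_*}$ and its simplex, you only get $\log\vol(\phi_a(K)) \ge -\log\bigl(1-\langle a, x^*\rangle\bigr) + O(1)$, and $1-\langle a, x^*\rangle \ge D(a)$ may tend to $0$ much more slowly than $D(a)$. Your two bounds are then measured against different infinitesimals, $-\log\bigl(1-\langle a,x^*\rangle\bigr)$ versus $\frac{(d+1)k}{n}\log D(a)$, and the sum need not tend to $+\infty$.

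The paper closes exactly this gap with one additional device. Each $p_i$ lying on $\ell_{a_*}$ is written as a convex combination $p_i = \sum_j \lambda_{ij} x_j$ of finitely many points $x_j \in K \cap \ell_{a_*}$ (the face $\ell_{a_*} \cap \conv K$ is the convex hull of $K \cap \ell_{a_*}$, so Carath\'eodory applies). Concavity of the logarithm then gives $\log\bigl(1-\langle a, p_i\rangle\bigr) \ge \sum_j \lambda_{ij} \log\bigl(1-\langle a, x_j\rangle\bigr) + O(1)$, while a fixed simplex of $K$ at each of the finitely many $x_j$ gives $\log\vol(\phi_a(K)) \ge -\log \min_j\bigl(1-\langle a, x_j\rangle\bigr) + O(1)$ with constants depending only on these finitely many simplices. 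Both divergences are now expressed through the same quantities $1-\langle a, x_j\rangle$, and the hypothesis $k < n/(d+1)$ makes the net coefficient of $\log\min_j\bigl(1-\langle a, x_j\rangle\bigr)$ negative, forcing $F \to +\infty$. You should insert this decomposition-plus-Jensen step; the remainder of your plan (gradient computation, compact sublevel sets, and the expectation of non-uniqueness, for which the paper gives Example \ref{exl:CounterBodyMany}) is sound.
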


For the sharpness of the assumptions and for non-uniqueness, see Examples \ref{exl:d+1Points} and \ref{exl:CounterBodyMany}.

Interestingly enough, the assumptions leading to existence become obsolete, and the transformation turns out to be unique, if $K$ is a ball. Since the image of a ball under an admissible projective transformation is an ellipsoid, and the ellipsoid can be mapped back to the ball by an affine transformation, the following theorem is equivalent to the existence and uniqueness of a projective transformation fitting the centroids of a ball and of a finite set.

\begin{thm}
\label{thm:ManyVsBall}
Let $B^d = \{x \in \R^d \mid \|x\| \le 1\}$ be the unit ball centered at the origin, and let $p_1, \ldots, p_n \in B^d$ be a finite set of points, $n \ge 3$. Then there exists a projective transformation fixing $B^d$ such that
\[
\sum_i \phi(p_i) = 0
\]
The transformation $\phi$ is unique up to post-composition with an orthogonal transformation.
\end{thm}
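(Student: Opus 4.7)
The projective transformations of $\RP^d$ preserving $B^d$ are precisely the isometries of hyperbolic $d$-space in its Klein model; modulo post-composition with $\Ort(d)$ they are parametrized by $\phi^{-1}(0) \in \inn B^d$. I would pass to the hyperboloid model, equipping $\R^{d+1}$ with the Lorentz form $\langle u,v\rangle_L \ce u_0 v_0 - u_1 v_1 - \cdots - u_d v_d$, and lift each $p_i$ to $\hat p_i \ce (1,p_i)/\sqrt{1-|p_i|^2}$ if $|p_i|<1$ (forward hyperboloid $H \ce \{\langle x,x\rangle_L = 1,\ x_0>0\}$) and $\hat p_i \ce (1,p_i)$ if $|p_i|=1$ (forward light cone). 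Using that for any Lorentz $A$ with $Ax=e_0$ one has $\langle A\hat p_i, e_0\rangle_L = \langle \hat p_i, x\rangle_L$, a short calculation shows that the critical-point equation for
\[
F(x) \ce \sum_{i=1}^n \log\langle \hat p_i, x\rangle_L, \qquad x \in H,
\]
namely $\sum_i \hat p_i / \langle \hat p_i, x\rangle_L \in \R \cdot x$, is equivalent to $\sum_i \phi(p_i)=0$ for the ball transformation $\phi$ induced by $A$. It therefore suffices to establish existence and uniqueness of a critical point of $F$ on $H$.

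\textbf{Strict convexity.} Along the geodesic $x(s) = \cosh(s)\,x + \sinh(s)\,v$ with $\langle v,v\rangle_L = -1$ and $\langle v,x\rangle_L=0$, write $a_i \ce \langle \hat p_i, x\rangle_L > 0$ and $b_i \ce \langle \hat p_i, v\rangle_L$. A direct computation gives
\[
\frac{d^2}{ds^2}\bigg|_{s=0} \log\langle \hat p_i, x(s)\rangle_L = \frac{a_i^2 - b_i^2}{a_i^2}.
\]
Decomposing $\hat p_i = a_i x + w_i$ with $w_i \in T_x H$ and applying the Cauchy--Schwarz inequality to the negative-definite form on $T_x H$, one finds $a_i^2 - b_i^2 \geq \langle \hat p_i,\hat p_i\rangle_L$, which equals $1$ for interior $p_i$ and $0$ for boundary $p_i$; in the boundary case equality holds iff $v$ is tangent to the geodesic ray from $x$ to $p_i$. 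If some $p_i$ lies in the open ball, the corresponding strictly positive summand forces $F$ to be strictly convex. Otherwise, simultaneous vanishing of all $f_i''(0)$ in direction $v$ places every $\hat p_i$ into the Lorentz $2$-plane $\operatorname{span}(x,v)$; but such a $2$-plane meets the light cone in at most two rays, contradicting the existence of $n\ge 3$ \emph{distinct} points $p_i \in \partial B^d$. Thus $F$ is strictly convex.

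\textbf{Properness and conclusion.} For a unit-speed geodesic $x(t)$ tending to $\xi \in \partial B^d$, one checks that $\langle \hat p_i, x(t)\rangle_L \sim c_i e^{t}$ with $c_i > 0$ whenever $p_i \neq \xi$ (whether $p_i$ is interior or on the boundary), while $\langle \hat p_i, x(t)\rangle_L \sim e^{-t}$ in the unique case (by distinctness, at most one) $p_i = \xi \in \partial B^d$. Summing, $F(x(t)) = (n-2)\,t + O(1) \to +\infty$ for $n \ge 3$, so $F$ is proper. A strictly convex proper function on the complete simply connected space $H$ has exactly one critical point, namely its global minimum, giving the desired $\phi$ uniquely up to $\Ort(d)$. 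I expect the main difficulty to be the strict-convexity step in the all-boundary case: the summands degenerate there into Busemann-type functions, each only convex, and the argument only closes by using that three or more distinct null rays cannot fit into a single Lorentz $2$-plane.
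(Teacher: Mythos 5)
Your proposal is correct and follows essentially the same route as the paper: both reduce the problem to finding the unique critical point of $\sum_i \log\langle \hat p_i, \cdot\rangle$ (Lorentz product) on the hyperboloid, prove strict geodesic convexity in the hyperbolic metric --- with the degenerate linear behaviour for boundary points ruled out because $n\ge 3$ distinct null directions cannot lie in one Lorentz $2$-plane --- and conclude via coercivity. The paper reaches the same functional through the volume-of-cone-section formula $\vol(C_y)=\const\cdot\|y\|_{1,d}^{-(d+1)}$ and checks convexity by the explicit parametrizations $\log\cosh t$ and $t$ along geodesics, but these are only presentational differences from your Cauchy--Schwarz computation.
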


This result generalizes centering via M\"obius transformations \cite{Spr05} used for unique representation of polyhedral types. There is also a relation to the conformal barycenter \cite{DE86}, see Remark \ref{rem:Moebius}.

\subsection{Two convex bodies}
Here we present the results for the case when both $K_1$ and $K_2$ are $d$-dimensional. In order to get some uniqueness results, we need to assume that $K_1$ and $K_2$ are convex. The uniqueness can be guaranteed if one of the bodies lies ``deep inside'' the other.

\begin{dfn}
Let $K_1 \supset K_2$ be two convex bodies in $\R^d$. The \emph{Hilbert diameter} of $K_2$ with respect to $K_1$ is defined as
\[
\diam_{K_1}(K_2) \ce \max_{p,q \in K_2} \frac12 \log |\crra(p,q;a,b)|
\]
where $a,b \in \partial K_1$ are points collinear with $p$ and $q$, and $\crra(p,q;a,b) = \frac{(p-a)(q-b)}{(p-b)(q-a)}$. The \emph{maximum Hilbert width} of $K_2$ with respect to $K_1$ is defined as
\[
\width_{K_1}(K_2) \ce \sup_{\alpha \cap K_1 = \emptyset} \frac12 \log |\crra(m_2, \ell_2; \ell_1, m_1)|
\]
where $\alpha$ is a $(d-2)$-dimensional affine subspace, and $\ell_i, m_i \supset \alpha$ are support hyperplanes to $K_i$.
\end{dfn}
It follows immediately from definition that
\[
\width_{K_1}(K_2) = \diam_{K_2^\circ}(K_1^\circ)
\]
where $K^\circ$ denotes the polar dual of $K$ (one may take the polar duals with respect to any point lying in the interior of both $K_1$ and $K_2$). See Figure \ref{fig:Tang}, where $L_i = K_i^\circ$.

\begin{figure}[ht]
\begin{center}
\begin{picture}(0,0)%
\includegraphics{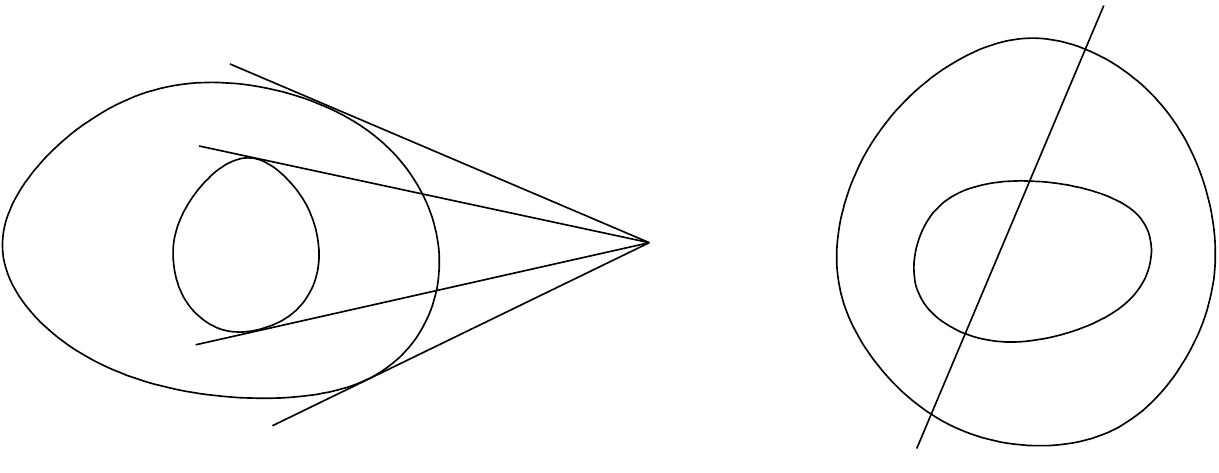}%
\end{picture}%
\setlength{\unitlength}{4144sp}%
\begingroup\makeatletter\ifx\SetFigFont\undefined%
\gdef\SetFigFont#1#2#3#4#5{%
  \reset@font\fontsize{#1}{#2pt}%
  \fontfamily{#3}\fontseries{#4}\fontshape{#5}%
  \selectfont}%
\fi\endgroup%
\begin{picture}(5569,2145)(407,-1110)
\put(1129,325){\makebox(0,0)[lb]{\smash{{\SetFigFont{9}{10.8}{\rmdefault}{\mddefault}{\updefault}{\color[rgb]{0,0,0}$\ell_2$}%
}}}}
\put(1117,-582){\makebox(0,0)[lb]{\smash{{\SetFigFont{9}{10.8}{\rmdefault}{\mddefault}{\updefault}{\color[rgb]{0,0,0}$m_2$}%
}}}}
\put(1221,737){\makebox(0,0)[lb]{\smash{{\SetFigFont{9}{10.8}{\rmdefault}{\mddefault}{\updefault}{\color[rgb]{0,0,0}$\ell_1$}%
}}}}
\put(1326,-1050){\makebox(0,0)[lb]{\smash{{\SetFigFont{9}{10.8}{\rmdefault}{\mddefault}{\updefault}{\color[rgb]{0,0,0}$m_1$}%
}}}}
\put(1478,-144){\makebox(0,0)[lb]{\smash{{\SetFigFont{9}{10.8}{\rmdefault}{\mddefault}{\updefault}{\color[rgb]{0,0,0}$K_2$}%
}}}}
\put(717,-134){\makebox(0,0)[lb]{\smash{{\SetFigFont{9}{10.8}{\rmdefault}{\mddefault}{\updefault}{\color[rgb]{0,0,0}$K_1$}%
}}}}
\put(5310,-185){\makebox(0,0)[lb]{\smash{{\SetFigFont{9}{10.8}{\rmdefault}{\mddefault}{\updefault}{\color[rgb]{0,0,0}$L_1$}%
}}}}
\put(5387,-745){\makebox(0,0)[lb]{\smash{{\SetFigFont{9}{10.8}{\rmdefault}{\mddefault}{\updefault}{\color[rgb]{0,0,0}$L_2$}%
}}}}
\put(3331,-241){\makebox(0,0)[lb]{\smash{{\SetFigFont{9}{10.8}{\rmdefault}{\mddefault}{\updefault}{\color[rgb]{0,0,0}$\alpha$}%
}}}}
\put(4501,-916){\makebox(0,0)[lb]{\smash{{\SetFigFont{9}{10.8}{\rmdefault}{\mddefault}{\updefault}{\color[rgb]{0,0,0}$a$}%
}}}}
\put(4816,-601){\makebox(0,0)[lb]{\smash{{\SetFigFont{9}{10.8}{\rmdefault}{\mddefault}{\updefault}{\color[rgb]{0,0,0}$p$}%
}}}}
\put(5401,839){\makebox(0,0)[lb]{\smash{{\SetFigFont{9}{10.8}{\rmdefault}{\mddefault}{\updefault}{\color[rgb]{0,0,0}$b$}%
}}}}
\put(5041,254){\makebox(0,0)[lb]{\smash{{\SetFigFont{9}{10.8}{\rmdefault}{\mddefault}{\updefault}{\color[rgb]{0,0,0}$q$}%
}}}}
\end{picture}%
\end{center}
\caption{Concurrent tangent hyperplanes to $K_1$ and $K_2$ and the dual collinear points on the boundaries of $L_1$ and $L_2$.}
\label{fig:Tang}
\end{figure}

Note that for $K_1 = B^d$ the number $\frac12 \log |\crra(m_2, \ell_2; \ell_1, m_1)|$ is the hyperbolic distance between $\ell_2$ and $m_2$, with $B^d$ viewed as the Cayley-Klein model of the hyperbolic space. Thus, the maximum hyperbolic width is defined as the maximum distance between support hyperplanes.

\begin{thm}
\label{thm:BodyVsBody}
Let $K_1, K_2 \subset \R^d$ be two convex bodies such that $K_2 \subset \inn\conv(K_1)$. Then there exists a projective transformation $\phi$ such that
\[
\bc(\phi(K_1)) = \bc(\phi(K_2))
\]
In general, $\phi$ is not unique modulo affine transformations. It is unique, if
\begin{equation}
\label{eqn:CR}
\width_{K_1}(K_2) < \log\frac{1 + \kappa_d}{1 - \kappa_d}, \quad \text{where }\kappa_d = \sqrt{\frac{6}{(d+1)(d+2)}}
\end{equation}

The bound \eqref{eqn:CR} is sharp. It is achieved for a cross-polytope $K_1$ and a rectangular parallelepiped $K_2$ inside it, provided that at least one of the sides of the parallelepiped is sufficiently long, see Fig. \ref{fig:SharpBound}.
\end{thm}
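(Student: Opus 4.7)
The plan is to reduce the problem to a variational one on a convex domain and to control the Hessian by the Hilbert width. After translating so that a common interior point of $K_1$ and $K_2$ lies at the origin, every admissible projective transformation, modulo affine post-composition, has the form $\phi_a(x) = x/(1-\langle a,x\rangle)$ for a covector $a$ in the open convex set $U := \inn K_1^\circ$. Set $V_K(a) := \vol(\phi_a(K)) = \int_K (1-\langle a,x\rangle)^{-(d+1)}\,dx$; a direct change of variables yields
\[
\bc(\phi_a(K)) \;=\; \tfrac{1}{d+1}\,\nabla \log V_K(a),
\]
so the matching-centroid equation becomes $\nabla F(a) = 0$ for
\[
F(a) \;:=\; \log V_{K_1}(a) - \log V_{K_2}(a).
\]
Existence then follows from properness: as $a \to \partial U$ the hyperplane $\ell_a$ becomes tangent to $\partial K_1$ and the integrand of $V_{K_1}$ develops a non-integrable singularity at the tangent point (thanks to convexity of $K_1$), forcing $V_{K_1}(a) \to \infty$, while $V_{K_2}(a)$ stays bounded because $K_2 \subset \inn K_1$ is bounded away from $\ell_a$; so $F \to +\infty$ on $\partial U$ and attains its minimum on $U$.

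Uniqueness is controlled by the Hessian. A second differentiation, using the same substitution, gives
\[
\nabla^2 \log V_K(a) \;=\; (d+1)(d+2)\,\Sigma_{L(a)} + (d+1)\,\bc(L(a))\,\bc(L(a))^T,
\]
with $L(a) := \phi_a(K)$ and $\Sigma_L$ the covariance matrix of the uniform probability measure on $L$. At a critical point of $F$ the rank-one terms cancel, leaving
\[
\nabla^2 F(a) \;=\; (d+1)(d+2)\,\bigl(\Sigma_{L_1(a)} - \Sigma_{L_2(a)}\bigr).
\]
On the contractible open domain $U$, a proper $C^2$ function whose critical points are all non-degenerate local minima has exactly one critical point by the Morse inequalities ($n_0 - n_1 + \dots = 1$, and $n_k = 0$ for $k \geq 1$). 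So uniqueness reduces to proving $\Sigma_{L_1} \succ \Sigma_{L_2}$ at each critical point.

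The covariance comparison is the main obstacle. Hilbert width being projectively invariant, $\width_{K_1}(K_2) = \width_{L_1}(L_2)$, so one may work directly with $L_1 \supset L_2$ sharing a common centroid. In each direction $u$, the task reduces to a one-dimensional statement about the marginals, which are $(d-1)$-concave centered densities. Two sharp inequalities do the job: a Grünbaum-type lower bound
\[
u^T \Sigma_{L_1} u \;\geq\; \tfrac{2}{(d+1)(d+2)}\, w_{L_1}(u)^2,
\]
saturated by the cross-polytope marginal $(1-|t|)^{d-1}$, and a Hensley-type upper bound
\[
u^T \Sigma_{L_2} u \;\leq\; \tfrac{1}{3}\, w_{L_2}(u)^2,
\]
saturated by the uniform distribution on a symmetric interval. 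A cross-ratio computation with parallel support hyperplanes identifies the hypothesis $\width_{K_1}(K_2) < \log\tfrac{1+\kappa_d}{1-\kappa_d}$ with the ratio bound $w_{L_2}(u) < \kappa_d\, w_{L_1}(u)$ in every direction, and combining the two variance estimates yields $u^T(\Sigma_{L_1} - \Sigma_{L_2})u > 0$ for every $u$, i.e., $\Sigma_{L_1} \succ \Sigma_{L_2}$ as required.

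Sharpness and non-uniqueness are witnessed by the extremal pair. For $K_1 = B_1^d$ the standard cross-polytope and $K_2 = \prod_i [-s_i, s_i]$ with $\sum_i s_i \leq 1$, both have centroid at the origin, so $a = 0$ is a critical point; explicit calculation gives $\Sigma_{K_1} = \tfrac{2}{(d+1)(d+2)}\,I$ and $\Sigma_{K_2} = \operatorname{diag}(s_i^2/3)$, making the Hessian at $a = 0$ diagonal with entries proportional to $\kappa_d^2 - s_i^2$. It vanishes precisely when some $s_i$ reaches $\kappa_d$; beyond the threshold a negative eigenvalue appears, $a = 0$ becomes a saddle, and by Morse theory a second local minimum must exist, destroying uniqueness. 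The Hilbert width in the degenerating direction is then exactly $\log\tfrac{1+\kappa_d}{1-\kappa_d}$, matching the stated bound; the ``at least one side sufficiently long'' proviso is what ensures that a single $s_i$ can reach $\kappa_d$ while the parallelepiped still fits inside the cross-polytope.
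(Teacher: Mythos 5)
Your proposal is correct and follows essentially the same route as the paper: the log-volume-difference functional (the paper's $F = F_1 - F_2$ on the dual cone), the gradient identity identifying critical points with matched centroids, the rank-one cancellation in the Hessian at critical points (Lemma \ref{lem:D2F}), the conversion of the cross-ratio hypothesis into a width-ratio bound (Lemma \ref{lem:CRWidth}), the two sharp variance-versus-width inequalities (Lemma \ref{lem:MomWidth}), the Morse-theoretic uniqueness argument (Lemma \ref{lem:Morse}), and the cross-polytope/box extremal pair for sharpness. The only substantive difference is that you cite the Gr\"unbaum-type lower bound and the cylinder upper bound on the marginal variance as known facts, whereas the paper proves them from scratch by a symmetrization and mass-rearrangement argument; your constants and the threshold $\kappa_d$ all check out.
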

\begin{figure}[ht]
\begin{center}
\begin{picture}(0,0)%
\includegraphics{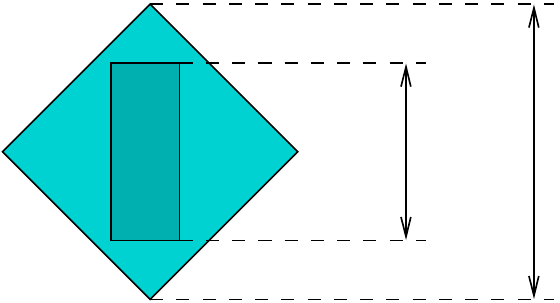}%
\end{picture}%
\setlength{\unitlength}{4144sp}%
\begingroup\makeatletter\ifx\SetFigFont\undefined%
\gdef\SetFigFont#1#2#3#4#5{%
  \reset@font\fontsize{#1}{#2pt}%
  \fontfamily{#3}\fontseries{#4}\fontshape{#5}%
  \selectfont}%
\fi\endgroup%
\begin{picture}(2544,1374)(664,-523)
\put(2566, 74){\makebox(0,0)[lb]{\smash{{\SetFigFont{9}{10.8}{\rmdefault}{\mddefault}{\updefault}{\color[rgb]{0,0,0}$>\kappa_d$}%
}}}}
\put(3151, 74){\makebox(0,0)[lb]{\smash{{\SetFigFont{9}{10.8}{\rmdefault}{\mddefault}{\updefault}{\color[rgb]{0,0,0}$1$}%
}}}}
\end{picture}%
\end{center}
\caption{For a long box inside a cross-polytope a projective transformation is not unique.}
\label{fig:SharpBound}
\end{figure}

The discussion in Section \ref{sec:CompPolar} justifies the following definition.

\begin{dfn}
A point $y \in \R^d$ is called a \emph{Santal\'o point of a pair} $L_1, L_2 \subset \R^d$ of convex bodies, if $(L_1)^\circ_y$ and $(L_2)^\circ_y$ have the same centroid. Here $L^\circ_y$ denotes the polar dual of $L$ with respect to the unit sphere centered at $y$.
\end{dfn}

By going to the polar duals of $K_1$ and $K_2$, one derives from Theorem \ref{thm:BodyVsBody} criteria for existence and uniqueness of a Santal\'o point of a pair.

\begin{cor}
Let $L_1, L_2 \subset \R^d$ be two convex bodies such that $L_2 \subset \inn L_1$. Then the pair $(L_1, L_2)$ has at least one Santal'\o point.

If the Hilbert diameter of $L_2$ with respect to $L_1$ satisfies
\[
\diam(L_2) < \log \frac{1+\kappa_d}{1-\kappa_d}
\]
then the Santal\'o point of the pair $(L_1, L_2)$ is unique.
\end{cor}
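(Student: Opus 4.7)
\medskip

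\textbf{Proof plan.} The strategy is to transport Theorem \ref{thm:BodyVsBody} through the polarity dictionary developed in Section \ref{sec:ConeSecSant}, applying it to the dual pair of bodies.

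First, I would fix an auxiliary base point $y_0 \in \inn L_2 \subset \inn L_1$ and set $K_1 \ce (L_2)^\circ_{y_0}$, $K_2 \ce (L_1)^\circ_{y_0}$. Since polar duality with respect to a point in the common interior reverses inclusions, the hypothesis $L_2 \subset \inn L_1$ gives $K_2 \subset \inn K_1$, so the pair $(K_1, K_2)$ satisfies the hypotheses of Theorem \ref{thm:BodyVsBody} and yields an admissible projective transformation $\phi$ with $\bc(\phi(K_1)) = \bc(\phi(K_2))$.

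For the existence part, I would invoke the correspondence set up in Section \ref{sec:ConeSecSant} (and recorded in Theorems \ref{thm:OneVsBodyB}--\ref{thm:OneVsBodyC}): an admissible projective transformation of $K = L^\circ_{y_0}$, taken modulo affine transformations, is determined by the hyperplane $\ell = \phi^{-1}(\infty)$, and if $y$ denotes the pole of $\ell$ with respect to the unit sphere at $y_0$, then $\phi(K)$ is affinely equivalent to $L^\circ_y$. Applying this simultaneously to $L_1$ and $L_2$ with the same $\phi$ (hence the same $\ell$ and the same $y$), the equality $\bc(\phi(K_1)) = \bc(\phi(K_2))$ becomes $\bc((L_2)^\circ_y) = \bc((L_1)^\circ_y)$, which is the defining property of the Santal\'o point of the pair $(L_1, L_2)$.

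For the uniqueness part, I would translate the Hilbert-width hypothesis of Theorem \ref{thm:BodyVsBody} via the identity $\width_{K_1}(K_2) = \diam_{K_2^\circ}(K_1^\circ)$ recorded right after the definition of Hilbert width. Because $K_i^\circ = L_i$ (biduality with respect to $y_0$ for bodies containing $y_0$ in their interior), this reads $\width_{K_1}(K_2) = \diam_{L_1}(L_2)$, so the assumption $\diam_{L_1}(L_2) < \log\frac{1+\kappa_d}{1-\kappa_d}$ of the corollary is exactly the uniqueness hypothesis \eqref{eqn:CR} of Theorem \ref{thm:BodyVsBody}. The bijection $\phi \leftrightarrow y$ of the previous paragraph then transfers uniqueness of $\phi$ modulo affine transformations to uniqueness of the Santal\'o point.

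The only genuine point to verify is the compatibility of the dictionary $\phi \leftrightarrow y$ with centroid matching, but this is exactly what Section \ref{sec:ConeSecSant} provides; given that, both halves of the corollary are direct substitutions into Theorem \ref{thm:BodyVsBody}.
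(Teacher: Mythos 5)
Your proposal is correct and follows exactly the route the paper intends: the paper derives this corollary from Theorem \ref{thm:BodyVsBody} by polar duality without writing out the details, and your write-up supplies precisely those details via Propositions \ref{prp:A} and \ref{prp:C} and the identity $\width_{K_1}(K_2)=\diam_{K_2^\circ}(K_1^\circ)$. One small slip: with your labeling $K_1=(L_2)^\circ_{y_0}$ and $K_2=(L_1)^\circ_{y_0}$, biduality gives $K_1^\circ=L_2$ and $K_2^\circ=L_1$ (not $K_i^\circ=L_i$), but the conclusion you actually draw, $\width_{K_1}(K_2)=\diam_{L_1}(L_2)$, is the correct one.
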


\begin{thm}
\label{thm:BodyVsBall}
Let $K \subset B^d$ be a convex body such that
\begin{equation}
\label{eqn:BallBody1}
\width(K) < \log\frac{1+\sqrt{\frac3{d+2}}}{1-\sqrt{\frac3{d+2}}}
\end{equation}
in the hyperbolic metric defined by $B^d$ as a Cayley-Klein model.
Then there is a unique projective transformation $\phi$, up to post-composition with orthogonal ones, that fixes $B^d$ and such that the centroid of $\phi(K)$ is the center of~$B^d$. The bound in \eqref{eqn:BallBody1} is sharp.
\end{thm}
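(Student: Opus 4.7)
\emph{Existence.} Apply Theorem \ref{thm:BodyVsBody} to the pair $(K_1, K_2) = (B^d, K)$: there is a projective map $\psi$ with $\bc(\psi(B^d)) = \bc(\psi(K))$. The image $\psi(B^d)$ is an ellipsoid, and composing $\psi$ with a linear map that carries this ellipsoid back to $B^d$ yields a projective transformation $\phi$ fixing $B^d$. By affine covariance of the centroid together with the symmetry $\bc(B^d) = 0$, we conclude $\bc(\phi(K)) = 0$.

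\emph{Uniqueness.} The projective transformations fixing $B^d$ form the hyperbolic isometry group of the Klein model on $B^d$. Modulo post-composition with orthogonal maps they are parametrized by the preimage of the origin: for each $a \in B^d$ let $\phi_a$ be the hyperbolic isometry with $\phi_a^{-1}(0) = a$, and set $F(a) \ce \bc(\phi_a(K)) \in \R^d$. The claim becomes: $F(a) = 0$ has at most one solution under the width hypothesis. Following the variational scheme behind Theorem \ref{thm:BodyVsBody}, $F$ is the hyperbolic gradient of a smooth potential $V \colon B^d \to \R$; its zeros are therefore the critical points of $V$, and uniqueness reduces to showing that $\mathrm{Hess}\, V$ is positive definite at every such critical point (equivalently, that $V$ is strictly convex in the hyperbolic metric).

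\emph{Hessian computation.} Normalize so that $a = 0$ is critical, i.e.\ $\bc(K) = 0$, and consider an infinitesimal hyperbolic translation in direction $v$. The Hessian decomposes as
\[
\mathrm{Hess}\, V(0)(v,v) = Q_1(v) - Q_2(v),
\]
where $Q_1(v)$ is built from the variation of the Jacobian of the translation over $K$ and $Q_2(v)$ is a quadratic form involving the second moments of $K$ in the direction $v$. The key improvement over the general case comes from the full $\Ort(d)$-symmetry of $B^d$: in the setting of Theorem \ref{thm:BodyVsBody} an intermediate Cauchy--Schwarz step is needed to bound $Q_1 - Q_2$ from below and forces the generic constant $\kappa_d$, whereas for $K_1 = B^d$ the relevant integrals can be evaluated exactly against the $\Ort(d)$-invariant uniform measure on $B^d$, whose directional second moment is $\frac{1}{d+2}$. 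This tightens the estimate by the factor $\sqrt{(d+1)/2}$ and yields positive-definiteness of $\mathrm{Hess}\,V$ under precisely the condition \eqref{eqn:BallBody1}. This symmetric sharpening of the Hessian estimate is the main obstacle of the proof.

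\emph{Sharpness.} The bound is saturated by a body degenerating to a long, thin slab along a coordinate axis. Take $K_{s,\varepsilon} = [-s,s]\times\varepsilon\,[-1,1]^{d-1}$ inscribed in $B^d$ with $s \to 1$ so that $\width(K_{s,\varepsilon})$ approaches the threshold in \eqref{eqn:BallBody1}. Along the $e_1$-axis the Hessian of $V$ then acquires a zero eigenvalue, and the corresponding one-parameter family of hyperbolic translations along $e_1$ preserves the centering condition to leading order, producing non-uniqueness past the threshold. This is the $B^d$-analog of the cross-polytope/parallelepiped example in Theorem \ref{thm:BodyVsBody}.
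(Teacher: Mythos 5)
Your overall strategy is the paper's: reduce uniqueness to positive definiteness of the Hessian of the variational functional at its critical points, and beat the generic constant $\kappa_d$ of Theorem \ref{thm:BodyVsBody} by using the exact normalized moment of inertia of the ball (resp.\ ellipsoid), $I_u(B^d)/\vol(B^d)=\tfrac1{d+2}$ from Lemma \ref{lem:MomBall}, in place of a worst-case lower bound; your improvement factor $\sqrt{(d+1)/2}$ is correct. Two points in the uniqueness part need repair. First, the constant $\kappa_d$ in the general case is not ``forced by a Cauchy--Schwarz step'': it comes from the extremal double cone in the lower bound $I_u(K)/\vol(K)\ge W_u(K)^2/(2(d+1)(d+2))$ of Lemma \ref{lem:MomWidth}, and the whole gain here is that for the ball this worst case is replaced by the exact ellipsoid value $W_u^2/(4(d+2))$, to be compared with the cylinder upper bound $W_u^2/12$ for $K$. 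Second, your parenthetical ``equivalently, that $V$ is strictly convex'' is false: positive definiteness of the Hessian at every critical point does not imply convexity, and the paper explicitly states it cannot prove convexity of the functional under the width hypothesis; uniqueness is instead deduced from coercivity plus nondegeneracy of all critical points via a Morse-theoretic argument (Lemma \ref{lem:Morse}). You need that lemma, not convexity, to close the argument.

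The sharpness example is wrong as stated. For $K_{s,\varepsilon}=[-s,s]\times\varepsilon[-1,1]^{d-1}$ the hyperbolic width in the $e_1$-direction is $\log\frac{1+s}{1-s}$, which tends to $+\infty$ as $s\to 1$; it does not approach the threshold in \eqref{eqn:BallBody1}, which corresponds to $s=\sqrt{3/(d+2)}$. To exhibit sharpness take $s$ slightly larger than $\sqrt{3/(d+2)}$ and $\varepsilon$ small. Then both centroids sit at the origin, so the identity is a critical point, but by Lemma \ref{lem:D2F} together with Lemmas \ref{lem:MomWidth} and \ref{lem:MomBall} the Hessian there has the eigenvalue $(d+2)\bigl(\frac1{d+2}-\frac{s^2}{3}\bigr)<0$ in the $e_1$-direction, so the identity is not the minimizer and a second, non-orthogonal centering transformation exists. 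This is the exact analogue of the cross-polytope/box example used for Theorem \ref{thm:BodyVsBody}, with the double cone replaced by the ball.
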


\begin{thm}
\label{thm:BallVsBody}
Let $K \subset B^d$ be a convex body such that
\begin{equation}
\label{eqn:BallBody2}
\diam(K) < \log\frac{1+\sqrt{\frac2{d+1}}}{1-\sqrt{\frac2{d+1}}}
\end{equation}
in the hyperbolic metric defined by $B^d$ as a Cayley-Klein model.
Then there exists a unique point $y \in \inn K$ such that the centroids of the polars of $K$ and $B^d$ with respect to $y$ coincide. The bound in \eqref{eqn:BallBody2} is sharp.
\end{thm}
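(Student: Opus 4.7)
The strategy is variational, parallel to the approach in Theorem \ref{thm:BodyVsBody} and specialized to the case where one body is the Euclidean ball. For $y\in\inn K\subset\inn B^d$ the polar bodies $K^\circ_y$ and $(B^d)^\circ_y$ are bounded (the latter is an ellipsoid that can be written down explicitly from the estimate $h_{B^d-y}(u)=\|u\|-y\cdot u\ge(1-\|y\|)\|u\|$). I would consider the functional
\[
F(y) \ce \log\vol(K^\circ_y) - \log\vol((B^d)^\circ_y).
\]
From the radial formula $\vol(L^\circ_y)=\frac1d\int_{S^{d-1}}(h_L(v)-y\cdot v)^{-d}\,dv$ one computes $\grad\log\vol(L^\circ_y)=(d+1)(\bc(L^\circ_y)-y)$, so $\grad F(y)=0$ is exactly the required centroid-matching condition $\bc(K^\circ_y)=\bc((B^d)^\circ_y)$. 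Since $\vol(K^\circ_y)\to\infty$ as $y\to\partial K$ while $\vol((B^d)^\circ_y)$ stays bounded and positive on $\inn K$, the functional $F$ is proper on $\inn K$, and at least one critical point exists.

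For uniqueness I would prove that $F$ is strictly convex under hypothesis \eqref{eqn:BallBody2}. A second differentiation of the radial formula yields
\[
\operatorname{Hess}\log\vol(L^\circ_y) \;=\; (d+1)(d+2)\,\operatorname{Cov}(L^\circ_y) \;+\; (d+1)(\bc(L^\circ_y)-y)(\bc(L^\circ_y)-y)^\top,
\]
where $\operatorname{Cov}(L^\circ_y)$ is the covariance matrix of the uniform distribution on $L^\circ_y$. Substituting $L=K$ and $L=B^d$ and subtracting, one obtains an explicit expression for $\operatorname{Hess} F$ in terms of the covariance tensors and centroid-offsets of the two polar bodies. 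The $B^d$-contribution admits a closed-form evaluation because $(B^d)^\circ_y$ is an ellipsoid whose moments are dictated by $\int_{B^d}xx^\top\,dx=\frac{\vol(B^d)}{d+2}I$; this computation produces the constant $\sqrt{2/(d+1)}$ appearing in \eqref{eqn:BallBody2}. The $K$-contribution is estimated, exactly as in Theorem \ref{thm:BodyVsBody}, by reducing the matrix inequality to a one-dimensional estimate along each line, where the factor $\frac{1+c}{1-c}$ emerges naturally from the Hilbert-diameter bound on $K$. Combining the two estimates shows that $\operatorname{Hess} F$ is positive definite precisely when \eqref{eqn:BallBody2} holds.

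The main technical obstacle is this sharp Hessian bound: balancing the closed-form ball calculation against the Hilbert-diameter bound for $K$ so that no slack is lost and the combined bound is exactly \eqref{eqn:BallBody2} (and not the weaker $\kappa_d$-bound of Theorem \ref{thm:BodyVsBody}). Finally, for sharpness I would exhibit a long thin convex body $K\subset B^d$ stretched along a diameter, in the spirit of the box-inside-cross-polytope example of Theorem \ref{thm:BodyVsBody}. At the critical value of $\diam(K)$ the Hessian of $F$ develops a zero eigenvalue in the direction of stretching, and the resulting symmetry-breaking produces a one-parameter family of distinct centroid-matching points $y$, showing that the bound cannot be relaxed.
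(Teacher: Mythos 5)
Your functional is, up to the constant factor $\tfrac1{d+1}$ and an additive constant, exactly the one the paper uses: by Propositions \ref{prp:B} and \ref{prp:C} one has $\vol(L^\circ_y)=\vol(\phi_y(L^\circ))=(d+1)\vol_{d+1}(C_y)$, so your $\log\vol(K^\circ_y)-\log\vol((B^d)^\circ_y)$ is $(d+1)$ times the paper's $F=F_1-F_2$ for the pair $(K^\circ,B^d)$ with $B^d\subset K^\circ$. Your gradient and Hessian formulas are correct (they are the polar-dual form of Lemmas \ref{lem:GradF} and \ref{lem:D2F}), the properness argument for existence is the paper's, and the source of the constant is the same: the exact value $I_u/\vol=W_u^2/(4(d+2))$ for the ellipsoid $(B^d)^\circ_y$ against the double-cone lower bound $W_u^2/(2(d+1)(d+2))$ of Lemma \ref{lem:MomWidth} for $K^\circ_y$, with the width ratio controlled by the cross-ratio bound (Lemma \ref{lem:CRWidth}) and the duality $\diam_{B^d}(K)=\width_{K^\circ}(B^d)$. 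So the route is essentially the paper's, not a different one.

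There is, however, one genuine gap: you assert that under \eqref{eqn:BallBody2} the functional $F$ is \emph{globally} strictly convex, and you deduce uniqueness from that. Your own Hessian formula shows why this cannot be expected: subtracting the two expressions leaves, besides the covariance difference, the term $(d+1)\bigl[(\bc(K^\circ_y)-y)(\bc(K^\circ_y)-y)^\top-(\bc((B^d)^\circ_y)-y)(\bc((B^d)^\circ_y)-y)^\top\bigr]$, a difference of two rank-one positive semidefinite matrices with no sign in general. The width/moment estimates control only the covariance part; the rank-one terms cancel \emph{only at critical points}, where the two centroids coincide. The paper explicitly concedes that it cannot prove convexity of $F$ on its whole domain and instead proves positive definiteness of $D^2F$ at every critical point, then invokes a separate topological argument (Lemma \ref{lem:Morse}: a proper function on a ball, tending to $+\infty$ at the boundary, with nondegenerate positive Hessian at each critical point, has a unique critical point). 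Your estimates do establish the hypothesis of that lemma, so the proof is repairable, but as written the uniqueness step is incomplete. A minor further point: to realize the bound sharply, the stretched body $K$ should be chosen so that $K^\circ$ is a double cone in the stretched direction (e.g.\ $K$ a box or cylinder, dual to the cross-polytope example of Theorem \ref{thm:BodyVsBody}); a generic ``long thin'' body loses the constant in Lemma \ref{lem:MomWidth}.
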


\begin{cor}
Let $B^d$ be the unit ball, and $K$ be contained in a concentric ball of radius $\sqrt{\frac3{d+2}}$. Then there is a unique projective transformation $\phi$, up to post-composition with orthogonal ones, that fixes $B^d$ and such that the centroid of $\phi(K)$ is the center of $B^d$.

Let $B^d$ be the unit ball, and $K$ be contained in a concentric ball of radius $\sqrt{\frac2{d+1}}$.
Then there is a unique Santal\'o point of the pair $(B^d, K)$.
\end{cor}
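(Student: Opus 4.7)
\smallskip
\noindent\textbf{Proof plan.}
The corollary consists of two independent assertions, each a direct consequence of one of the preceding theorems: Theorem~\ref{thm:BodyVsBall} for the first and Theorem~\ref{thm:BallVsBody} for the second. The plan is therefore to estimate the relevant hyperbolic invariant of a convex body $K$ sitting inside a concentric Euclidean ball $B_r := \{x \in \R^d : \|x\| \le r\}$ of radius $r<1$, viewed in the Cayley--Klein model on $B^d$.

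The common ingredient is the identity $\diam_{\mathrm{hyp}}(B_r) = \log\tfrac{1+r}{1-r}$. This is a one-line cross-ratio computation for the antipodal pair $\pm r e_1$, and since $B_r$ is invariant under the Klein-model isotropy at the origin it is actually a hyperbolic ball, so the value above is its hyperbolic diameter. In particular every $K \subset B_r$ satisfies $\diam_{\mathrm{hyp}}(K) \le \log\tfrac{1+r}{1-r}$. For the second claim I would set $r = \sqrt{2/(d+1)}$, observe that the resulting diameter bound is exactly \eqref{eqn:BallBody2}, and invoke Theorem~\ref{thm:BallVsBody}.

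For the first claim I additionally need the comparison $\width_{B^d}(K) \le \diam_{\mathrm{hyp}}(K)$. To establish it I would fix any hyperbolic line $L$ in $B^d$ and let $\alpha$ be the $(d-2)$-dimensional affine subspace polar to $L$; then $\alpha$ lies outside $B^d$ and the two support hyperplanes $\ell_1, \ell_2$ to $K$ through $\alpha$ are precisely the ultraparallel hyperplanes that support $K$ and are perpendicular to $L$. Their hyperbolic distance equals $d_{\mathrm{hyp}}(P_1, P_2)$, where $P_i = L \cap \ell_i$ is the foot of the perpendicular from some contact point $p_i \in \ell_i \cap K$. Since orthogonal projection onto a geodesic is non-expanding in hyperbolic space, $d_{\mathrm{hyp}}(P_1,P_2) \le d_{\mathrm{hyp}}(p_1,p_2) \le \diam_{\mathrm{hyp}}(K)$. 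Taking the supremum over $L$ yields the comparison; then $r = \sqrt{3/(d+2)}$ matches \eqref{eqn:BallBody1} and Theorem~\ref{thm:BodyVsBall} finishes the argument.

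The only point that requires care is the passage from the non-strict ``$\le$'' produced above to the strict inequality required by Theorems~\ref{thm:BodyVsBall} and \ref{thm:BallVsBody}. I expect this to be routine: compactness of $K$ promotes ``$\le$'' to ``$<$'' as soon as $K$ is strictly contained in $\inn B_r$, and the extremal case $K=B_r$ itself is handled separately since then the identity transformation (respectively the origin) already provides the required solution, uniqueness following from the rotational symmetry.
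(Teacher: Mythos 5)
Your reduction is exactly the intended one (the paper offers no separate proof of this corollary): a concentric ball $B_r$ in the Cayley--Klein model is a hyperbolic ball of radius $\tanh^{-1}(r)$, so $\diam(K)\le\log\frac{1+r}{1-r}$ for $K\subset B_r$, and the two radii are chosen to match \eqref{eqn:BallBody2} and \eqref{eqn:BallBody1}. Your comparison $\width(K)\le\diam(K)$ via the $1$-Lipschitz property of orthogonal projection onto the polar geodesic is sound (one could equally use monotonicity of $\width$ under inclusion together with $\width(B_r)=\log\frac{1+r}{1-r}$), so the first claim reduces correctly to Theorem \ref{thm:BodyVsBall} and the second to Theorem \ref{thm:BallVsBody}.

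The one step that does not go through as written is your treatment of the borderline case. The dichotomy ``either $K$ is strictly contained in $\inn B_r$, or $K=B_r$'' is not exhaustive: a convex body $K\subset B_r$ has $\diam(K)=\log\frac{1+r}{1-r}$ exactly when it contains an antipodal pair $\{ru,-ru\}\subset\partial B_r$, and there are many such bodies other than $B_r$ itself, e.g.\ $\conv\bigl(\{\pm re_1\}\cup \varepsilon B^d\bigr)$ for small $\varepsilon$. These satisfy only the non-strict inequality, are not rotationally symmetric, and are covered by neither branch of your patch, so for them Theorems \ref{thm:BodyVsBall} and \ref{thm:BallVsBody} do not apply verbatim. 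To close this you would need either to read the hypothesis with the open ball (which is presumably what the author intends, since the theorems' bounds are sharp), or to examine the equality cases in the chain of estimates behind the theorems: equality in Lemma \ref{lem:MomWidth} forces $\phi_y(K)$ to be a cylinder in the direction $u$, and one can check that a cylinder inscribed in $B_r$ realizing the extremal width ratio against the ball cannot actually occur for a body containing such an antipodal pair, so strict positivity of $D^2_{u,u}F$ at critical points survives. As it stands, this boundary case is a genuine (if small) hole in the argument.
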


\subsection{Plan of the paper and acknowledgments}
In Section \ref{sec:Reform} we discuss left cosets of the affine group in the projective group and represent them by elations, central projections and compositions of polarities.

Section \ref{sec:FinSets} deals with the case of $\dim K_1 = \dim K_2 = 0$, that is with finite point sets. Theorems \ref{thm:OneVsMany} and \ref{thm:ManyVsMany} are proved here. The solution is found as a critical point of a concave functional \eqref{eqn:F1}, respectively of the difference of two such functionals.

In Section \ref{sec:PointBody}, Theorem \ref{thm:OneVsBody} is proved and interpreted in the contexts of minimizing the volume of a cone section and of the Santal\'o point. Here the convex functional \eqref{eqn:FBody} associated with a convex body is introduced.

Section \ref{sec:ManyVsBody} deals with the case $\dim K_1 = d$, $\dim K_2 = 0$ and proves Theorems \ref{thm:ManyVsBody} and \ref{thm:ManyVsBall}.

Section \ref{sec:TwoBodies} deals with the case of two $d$-dimensional sets and proves Theorems \ref{thm:BodyVsBody}.

Finally, in Section \ref{sec:FutRes} poses some questions for future research.

The author wishes to thank Arnau Padrol, Raman Sanyal, Boris Springborn, and G\"unter Ziegler for useful discussions.

\section{Three ways to represent projectivities modulo affinities}
\label{sec:Reform}
\subsection{Choosing a hyperplane to be sent to infinity}
\label{sec:HypInfty}
Affine transformations (or affinities) of $\R^d$ are maps of the form $x \mapsto Ax + b$, where $A \in \GL(d)$.
Identify $\R^d$ with a subset of the projective space:
\[
\R^d = \{(x_0 : x_1 : \ldots : x_d) \in \RP^d \mid x_0 \ne 0\}
\]
by associating $x \in \R^d$ with the equivalence class of $(1,x) \in \R^{d+1}$.
Then projective transformations of $\RP^d$ restricted to $\R^d$ have the form
\[
x \mapsto \frac{Ax+b}{\langle c, x \rangle + \delta}, \quad A \in \GL(d), b, c \in \R^d, \delta \in \R
\]
In particular, the group $\Aff(d)$ of affinities is a subgroup of the group $\Proj(d)$ of projectivities.

\begin{prp}
\label{prp:A}
Every right coset of $\Aff(d)$ in $\Proj(d)$ has a unique representative of the form
\begin{equation}
\label{eqn:PhiY}
\phi_y \colon x \mapsto \frac{x}{1 + \langle x, y \rangle}
\end{equation}
or two representatives ($y$ doing the same as $-y$) of the form
\begin{equation}
\label{eqn:NotElation}
x \mapsto \frac{x + y}{\langle x, y \rangle}, \quad \|y\|=1
\end{equation}
\end{prp}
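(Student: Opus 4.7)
The plan is to identify right cosets $\Aff(d)\cdot\phi$ with hyperplanes of $\RP^d$ via the rule $\phi \mapsto \phi^{-1}(\{x_0=0\})$, and then to parametrize these hyperplanes in the affine chart by the two families of normal forms given in the statement.

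First, I would verify that the rule above defines a bijection. An element $\psi \in \Proj(d)$ is affine iff it fixes the hyperplane at infinity $\{x_0=0\}$ setwise. Hence $\phi' \in \Aff(d)\cdot\phi$ iff $\phi'\circ\phi^{-1}$ fixes $\{x_0=0\}$, iff $\phi$ and $\phi'$ send the same hyperplane $H \subset \RP^d$ to $\{x_0=0\}$. This makes the correspondence between cosets and hyperplanes well-defined and injective. Since $\Proj(d)$ acts transitively on hyperplanes of $\RP^d$, every hyperplane actually occurs, so the correspondence is a bijection. (Alternatively, surjectivity falls out of the explicit formulas produced below.)

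Next, I would enumerate hyperplanes of $\RP^d$ in the chart $x_0 \ne 0$. Each is the zero set of a nonzero linear form $\xi_0 x_0 + \langle \xi, x\rangle$, unique up to a scalar, and three mutually exclusive cases arise. If $\xi = 0$, then $H = \{x_0=0\}$ is the hyperplane at infinity itself. If $\xi \ne 0$ and $\xi_0 \ne 0$, rescaling $\xi_0 = 1$ puts $H$ in the form $\{x : \langle x, y\rangle = -1\}$ for a unique $y = \xi \ne 0$ in $\R^d$. If $\xi_0 = 0$, rescaling $\|\xi\|=1$ puts $H$ in the form $\{x : \langle x, y\rangle = 0\}$ with $y \in \Sph^{d-1}$ determined up to sign.

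Finally, a direct computation shows that $\phi_y$ from \eqref{eqn:PhiY} sends the hyperplane $\{x : 1 + \langle x, y\rangle = 0\}$ to $\{x_0=0\}$, which matches the first two cases: $y=0$ gives $\phi_y = \mathrm{id}$ with $H = \{x_0=0\}$, while $y \ne 0$ runs bijectively through all affine hyperplanes not passing through the origin. Likewise, the map in \eqref{eqn:NotElation} sends $\{x : \langle x, y\rangle = 0\}$ to $\{x_0=0\}$, exhausting the third family, and the involution $y \leftrightarrow -y$ preserves this hyperplane and hence yields two distinct projectivities in a single right coset. No substantive obstacle appears: the proof is essentially a careful bookkeeping of hyperplanes in $\RP^d$ combined with routine verification of each normal form.
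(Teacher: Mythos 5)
Your proposal is correct and follows essentially the same route as the paper: identify right cosets of $\Aff(d)$ with the hyperplane sent to infinity, then enumerate hyperplanes of $\RP^d$ (at infinity, affine not through the origin, through the origin) and match each case with one of the two normal forms. Your write-up merely spells out the coset-to-hyperplane bijection and the case analysis in more detail than the paper does.
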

\begin{proof}
Two projectivities belong to the same right coset of $\Aff(d)$ if and only if they send to infinity the same hyperplane. Any hyperplane that does not pass through the origin has equation $\langle x, y \rangle + 1 = 0$ for a unique $y$, and is therefore sent to infinity by a map of the form \eqref{eqn:PhiY}. In particular, for $y=0$ the hyperplane at infinity is sent to itself. Any hyperplane through the origin is sent to infinity by a map of the form \eqref{eqn:NotElation}.
\end{proof}

\begin{rem}
Projective transformation \eqref{eqn:PhiY} is an elation with the axis $y^\perp$ and center $0$. Projective transformation \eqref{eqn:NotElation} is not an elation (not even a homology), but can be replaced by the map $x \mapsto \frac{x+y}{\langle x, y \rangle} - y$ which, for $\|y\| = 1$, is an elation with the axis $\{x \mid \langle x, y \rangle + 1 = 0\}$ and center $-y$.
\end{rem}

We may always assume $0 \in \conv(K_i)$ for $i=1,2$. Then none of the maps \eqref{eqn:NotElation} is admissible in the sense of Definition \ref{dfn:Admissible}, and the map \eqref{eqn:PhiY} is admissible if and only if $y \in \inn K_1^\circ \cap \inn K_2^\circ$, where
\begin{equation}
\label{eqn:PolarBody}
K^\circ \ce \{y \in \R^d \mid \langle x, y \rangle \ge -1\, \forall x \in K\}
\end{equation}
This allows us to reformulate Questions \ref{quest:1} and \ref{quest:2} as follows.

\begin{reform}
\label{ref:A}
For a set $K \subset \R^d$ containing the origin in the interior of the convex hull, when does there exist $y \in \R^d$ such that $\bc(\phi_y(K)) = 0$?

For two sets $K_1, K_2 \subset \R^d$ containing the origin in their convex hulls, when does there exist $y \in \inn K_1^\circ \cap \inn K_2^\circ$ such that $\bc(\phi_y(K_1)) = \bc(\phi_y(K_2))$?

Under what assumptions is $y$ unique?
\end{reform}

\subsection{Cone sections}
\label{sec:ConeSec}
For every set $X \subset \R^{d+1}$ define the conical hull over $X$ as
\begin{equation}
\label{eqn:ConeHull}
\cone(X) = \{\lambda x \mid x \in X\}
\end{equation}
For $x \in \R^d$ denote
\[
\hat x \ce (1,x) \in \R^{d+1}
\]
and for $K \subset \R^d$ denote $\hat K \ce \{\hat x \mid x \in K\}$. This associates with $K \subset \R^d$ a set $C = \cone(\hat K)$ in $\R^{d+1}$. Hyperplane sections of $C$ are central projections of $\hat K$, and thus images of $K$ under projective transformations. This leads to the following reformulation of Questions \ref{quest:1} and \ref{quest:2}.

\begin{reform}
\label{ref:B}
For a cone $C \subset \R^{d+1}$ and a point $q$ different from the apex of $C$, when does there exist a hyperplane $H$ through $q$ such that $q$ is the centroid of $H \cap C$?

For two cones $C_1, C_2 \subset \R^{d+1}$ with common apex, when does there exist a hyperplane $H$ not passing through the apex such that the centroids of $H \cap C_1$ and $H \cap C_2$ coincide?

Under what assumptions is $H$ unique (in the second case, up to parallel translation)?
\end{reform}

To show that central projections modulo dilations correspond to projectivities modulo affinities, let us relate central projections with transformations $\phi_y$ from \eqref{eqn:PhiY}. For every vector $y \in \R^d$ denote
\begin{equation}
\label{eqn:HY}
H_y \ce \{(x_0, x) \in \R^{d+1} \mid \langle x, y \rangle + x_0 - 1 = 0\}
\end{equation}
Denote by $\rho_y \colon \widehat{\R^d} \to H_y$ the central projection and by $\pi \colon H_y \to \widehat{\R^d}$ the parallel projection along $e_0$.

\begin{prp}
\label{prp:B}
The map \eqref{eqn:PhiY} is a composition of a central and a parallel projection:
\[
\phi_y = \pi \circ \rho_y
\]
\end{prp}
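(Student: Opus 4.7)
The plan is to verify the identity by direct computation: apply $\rho_y$ to a typical point $\hat x = (1,x) \in \widehat{\R^d}$, then apply the parallel projection $\pi$, and compare with the formula for $\phi_y$. Since $\rho_y$ is central projection from the origin, the image of $\hat x$ on $H_y$ is the unique scalar multiple $\lambda \hat x$ satisfying the defining equation of $H_y$.

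Concretely, I would substitute $\lambda \hat x = (\lambda, \lambda x)$ into the equation $\langle x, y\rangle + x_0 - 1 = 0$ of $H_y$ from \eqref{eqn:HY}. This gives $\lambda \langle x, y\rangle + \lambda - 1 = 0$, hence
\[
\lambda = \frac{1}{1 + \langle x, y\rangle},
\]
so
\[
\rho_y(\hat x) = \Bigl(\tfrac{1}{1+\langle x,y\rangle},\ \tfrac{x}{1+\langle x,y\rangle}\Bigr) \in H_y.
\]
(Admissibility of $\phi_y$ on $K$ corresponds to $1 + \langle x, y\rangle > 0$ for all $x \in \conv K$, so this denominator is positive in the setting of Reformulation~\ref{ref:A}.) Applying the parallel projection $\pi$ along $e_0$, which replaces the first coordinate by $1$, yields
\[
\pi(\rho_y(\hat x)) = \Bigl(1,\ \tfrac{x}{1+\langle x,y\rangle}\Bigr) = \widehat{\phi_y(x)},
\]
which under the identification $\widehat{\R^d} = \R^d$ is exactly $\phi_y(x)$ as defined in \eqref{eqn:PhiY}.

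There is no real obstacle: the proof is a one-line verification once the scalar $\lambda$ is pinned down from the hyperplane equation. The only thing worth emphasizing is the geometric interpretation, namely that the factor $1+\langle x,y\rangle$ in the denominator of $\phi_y$ is precisely the reciprocal of the scaling needed to bring the ray through $\hat x$ to meet $H_y$, and that $\pi$ simply reintroduces the affine chart $\widehat{\R^d}$ after this central projection.
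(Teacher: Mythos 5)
Your proof is correct and is essentially identical to the paper's: both determine the scalar $\lambda = \frac{1}{1+\langle x,y\rangle}$ from the condition $\rho_y(\hat x)=\lambda\hat x \in H_y$ and then apply $\pi$. The extra remarks on admissibility and the geometric meaning of the denominator are fine but not needed for the verification.
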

\begin{proof}
From $\rho_y(\hat x) = \lambda \hat x$ and $\rho_y(\hat x) \in H_y$ it follows that
\[
\rho_y(\hat x) = \frac{\hat x}{1 + \langle x, y \rangle}
\]
And since $\pi(\hat x) = x$, we have $\phi_y = \pi \circ \rho_y$.
\end{proof}

Note that the hyperplane \eqref{eqn:HY} contains $e_0$, and that every non-vertical hyperplane through $e_0$ is $H_y$ for some $y \in \R^d$. This establishes a bijection between the images $\phi_y(K)$ and sections $H_y \cap C$ and shows that Reformulation \ref{ref:B} is equivalent to Reformulation \ref{ref:A}.

\subsection{Composition of two polarities}
\label{sec:CompPolar}
Similarly to \eqref{eqn:PolarBody}, define the polar dual of $L$ with respect to a point $y$:
\[
L_y^\circ \ce (L-y)^\circ + y = \{x \in \R^d \mid \langle x-y, z-y \rangle \ge -1 \; \forall z \in L\}
\]
Here is a reformulation of Questions \ref{quest:1} and \ref{quest:2} in the case when $K_1$ and $K_2$ are both convex bodies.

\begin{reform}
\label{ref:C}
For a convex body $L \subset \R^d$, when does there exist a point $y \in \inn L$ such that the polar dual of $L$ with respect to $y$ has centroid at $y$?

For two convex bodies $L_1, L_2 \subset \R^d$, when does there exist a point $y \in \inn L_1 \cap \inn L_2$ such that the centroids of the polar duals of $L_1$ and $L_2$ with respect to $y$ coincide?

Under what assumptions is $y$ unique?
\end{reform}

Again, we justify this by relating polarity with variable center to the map $\phi_y$ from \eqref{eqn:PhiY}.
\begin{prp}
\label{prp:C}
For every $d$-dimensional convex body $K \subset \R^d$ and every point $y \in \inn K^\circ$ we have
\[
\phi_y(K) = (K^\circ - y)^\circ = (K^\circ)^\circ_y - y
\]
\end{prp}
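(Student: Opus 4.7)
The second equality $(K^\circ - y)^\circ = (K^\circ)^\circ_y - y$ is immediate from the defining formula $L^\circ_y = (L-y)^\circ + y$ applied with $L = K^\circ$, so all the content sits in the first equality $\phi_y(K) = (K^\circ - y)^\circ$. My plan is to verify this as a direct membership calculation, with the bipolar theorem $(K^\circ)^\circ = K$ supplying the only nontrivial input.

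Since $y \in \inn K^\circ$ says $\langle x, y \rangle > -1$ for every $x \in K$, the scalar $1 + \langle x, y \rangle$ is strictly positive on $K$, so $\phi_y(x) = x/(1 + \langle x, y \rangle)$ is well-defined there. Writing $x' \ce \phi_y(x)$ and unfolding the definition of polarity gives
\[
x' \in (K^\circ - y)^\circ \iff \langle x', z - y \rangle \ge -1 \text{ for all } z \in K^\circ.
\]
Substituting and clearing the positive denominator turns the right-hand inequality into $\langle x, z \rangle - \langle x, y \rangle \ge -(1 + \langle x, y \rangle)$, which simplifies to $\langle x, z \rangle \ge -1$. Hence I obtain the two-sided equivalence
\[
\phi_y(x) \in (K^\circ - y)^\circ \iff x \in (K^\circ)^\circ.
\]

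Under the standing normalization $0 \in \inn K$ that the paper has in force (this is also forced by the nonemptiness of $\inn K^\circ$ and by admissibility of $\phi_y$), the bipolar theorem yields $(K^\circ)^\circ = K$. Reading the equivalence left-to-right shows $\phi_y(K) \subset (K^\circ - y)^\circ$. For the reverse inclusion, given $x' \in (K^\circ - y)^\circ$, I would take the candidate preimage $x \ce \phi_{-y}(x') = x'/(1 - \langle x', y \rangle)$; the denominator is positive because $0 \in \inn K^\circ$ forces $\langle x', y \rangle < 1$ (apply the defining inequality at $z = 0$, with strictness coming from $K^\circ$ having an interior). Then $\phi_y(x) = x'$ by the routine identity $\phi_y \circ \phi_{-y} = \mathrm{id}$, and the right-to-left direction of the equivalence places $x$ in $K$. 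I do not foresee any real obstacle: the whole argument is a bookkeeping exercise once the polarity formulas and the bipolar identity are in hand, and the only step to flag is the positivity of $1 + \langle x, y \rangle$, which is exactly the content of admissibility.
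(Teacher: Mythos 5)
Your proof is correct and takes essentially the same route as the paper's: the paper runs the identical denominator-clearing computation, only quantified the other way (over $x \in K$ for fixed $z$, yielding $K^\circ - y = (\phi_y(K))^\circ$ directly) and then applies the bipolar theorem to $\phi_y(K)$ rather than to $K$, which spares it your $\phi_{-y}$-inversion step for the reverse inclusion. One parenthetical claim is false, though harmless here: $0 \in \inn K$ is \emph{not} forced by $\inn K^\circ \ne \emptyset$ or by admissibility of $\phi_y$ (take $K = [1,2] \subset \R$ and $y = 0$, where the conclusion actually fails); what saves the statement is the paper's standing normalization $0 \in \conv(K_i)$, which is genuinely needed for the bipolar identity you invoke.
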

\begin{proof}
Indeed, for any $y \in \inn K^\circ$ we have
\begin{multline*}
z \in K^\circ \Leftrightarrow \langle x, z \rangle \ge -1 \; \forall x \in K \Leftrightarrow \langle x, z-y \rangle \ge -(1 + \langle x, y \rangle) \; \forall x \in K \\
\Leftrightarrow \left\langle \frac{x}{1 + \langle x, y \rangle}, z-y \right\rangle \ge -1 \; \forall x \in K \Leftrightarrow z - y \in (\phi_y(K))^\circ
\end{multline*}
Hence $K^\circ - y = (\phi_y(K))^\circ$ for every $K \subset \R^d$. If $K$ is convex, compact, and $0 \in \inn K$, then $(\phi_y(K))^{\circ\circ} = \phi_y(K)$, and the proposition follows.
\end{proof}

\begin{rem}
The property $y = \bc(L^\circ_y)$ is characteristic for the Santal\'{o} point of $L$, \cite[Remark 10.8]{Leicht98}. Thus, existence and uniqueness of the Santal\'{o} point for convex bodies implies a positive answer to Questions \ref{quest:1} and \ref{quest:2} in the case of a convex body and a point.

In the case of two convex bodies $L_1$ and $L_2$ in Reformulation \ref{ref:C} the point $y$ can be called the Santal\'{o} point of a pair of convex bodies.
\end{rem}

\section{Fitting centroids of two finite sets}
\label{sec:FinSets}
\subsection{One point vs. several}
Here we prove Theorem \ref{thm:OneVsMany} using Reformulation \ref{ref:A} from Section \ref{sec:HypInfty}. Without loss of generality we may assume $q=0$. Since $\phi_y(0) = 0$ for all $y$, it follows from Proposition \ref{prp:A} that Theorem \ref{thm:OneVsMany} is equivalent to the following.

\begin{thmA}
\label{thm:OneVsManyA}
Let $p_1, \ldots, p_n \in \R^d$ be such that $0 \in \inn P$, where $P = \conv\{p_1, \ldots, p_n\}$. Then there exists a unique $y \in \inn P^\circ$ such that
\begin{equation}
\label{eqn:Cond1}
\sum_{i=1}^n \frac{p_i}{1 + \langle p_i, y \rangle} = 0
\end{equation}
\end{thmA}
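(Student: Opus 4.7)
The plan is to realize the condition \eqref{eqn:Cond1} as the vanishing of the gradient of a strictly concave coercive function on $\inn P^\circ$, so that existence and uniqueness of $y$ follow from convex-analytic first principles.

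Define
\[
F \colon \inn P^\circ \to \R, \qquad F(y) \ce \sum_{i=1}^n \log(1 + \langle p_i, y \rangle).
\]
Since $P^\circ = \{y \mid \langle p_i,y\rangle \ge -1 \text{ for all } i\}$, the interior is $\{y \mid 1+\langle p_i,y\rangle > 0 \text{ for all } i\}$, so $F$ is well-defined and smooth there. A direct differentiation gives
\[
\nabla F(y) = \sum_{i=1}^n \frac{p_i}{1+\langle p_i,y\rangle},
\]
so \eqref{eqn:Cond1} is exactly the Euler--Lagrange equation $\nabla F(y)=0$.

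Next I check strict concavity. The Hessian is
\[
\nabla^2 F(y) = -\sum_{i=1}^n \frac{p_i p_i^{\top}}{(1+\langle p_i,y\rangle)^2},
\]
which is negative semidefinite, and is strictly negative definite precisely when $\{p_1,\dots,p_n\}$ spans $\R^d$ linearly. Since $0\in \inn P$, the $p_i$ even positively span $\R^d$, so in particular they linearly span. Hence $F$ is strictly concave on $\inn P^\circ$, which already yields uniqueness of any critical point.

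For existence I show $F$ attains its maximum in $\inn P^\circ$. The set $P^\circ$ is bounded, because $0\in \inn P$ gives a ball $B_\epsilon(0)\subset P$, and then $\langle -\epsilon y/\|y\|, y\rangle \ge -1$ forces $\|y\|\le 1/\epsilon$ for $y\in P^\circ$. Thus $\overline{P^\circ}$ is compact. As $y$ approaches $\partial P^\circ$, at least one factor $1+\langle p_i,y\rangle$ tends to $0^+$ while the others remain bounded above (by boundedness of $P^\circ$), so $F(y)\to -\infty$. Consequently $F$ achieves a global maximum at some interior point $y_0$, and by strict concavity $y_0$ is unique; it satisfies $\nabla F(y_0)=0$, which is \eqref{eqn:Cond1}.

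The only substantive points that require any checking are the domain identification $\inn P^\circ = \{y\mid 1+\langle p_i,y\rangle>0\}$, the linear-span property of the $p_i$ used for strict negativity of the Hessian, and the boundary blow-up of $F$; none of these should pose a genuine difficulty.
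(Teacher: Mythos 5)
Your proposal is correct and follows essentially the same route as the paper: the same functional $F(y)=\sum_i \log(1+\langle p_i,y\rangle)$, the same gradient and Hessian computations establishing strict concavity from the spanning property of the $p_i$, and the same boundary blow-up argument giving existence of a maximizer. The only cosmetic difference is that you justify the linear span via positive spanning from $0\in\inn P$, where the paper invokes affine spanning; both are valid.
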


The proof is based on the fact that the left hand side of \eqref{eqn:Cond1} is the gradient of a strictly concave function. Define
\begin{equation}
\label{eqn:F1}
F \colon \inn P^\circ \to \R, \quad F(y) \ce \sum_{i=1}^n \log(1 + \langle p_i, y \rangle)
\end{equation}

\begin{lem}
\label{lem:Grad1}
We have
\[
\grad F(y) = \sum_{i=1}^n \frac{p_i}{1 + \langle p_i, y \rangle}
\]
\end{lem}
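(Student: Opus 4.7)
The plan is to compute the gradient term-by-term, using the chain rule on each summand. Since $F$ is a finite sum of scalar functions of $y$, it suffices to show that for each $i$,
\[
\grad_y \log(1 + \langle p_i, y \rangle) = \frac{p_i}{1 + \langle p_i, y \rangle}.
\]
First I would note that for $y \in \inn P^\circ$ the inequality $\langle p_i, y \rangle > -1$ holds for every $i$ (this is exactly the defining condition of $P^\circ$ from \eqref{eqn:PolarBody}, strict on the interior), so each logarithm is defined and smooth in a neighborhood of $y$.

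Next, observe that the map $y \mapsto 1 + \langle p_i, y \rangle$ is affine with gradient $p_i$. Applying the chain rule with the outer function $\log$ (whose derivative at a positive argument $t$ is $1/t$) gives the claimed formula for the $i$-th summand. Summing over $i$ yields
\[
\grad F(y) = \sum_{i=1}^n \frac{p_i}{1 + \langle p_i, y \rangle},
\]
which is the statement of the lemma.

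There is no real obstacle here; the computation is elementary and the only point requiring care is verifying that the expression is well-defined on $\inn P^\circ$, which follows immediately from the definition of the polar dual. The lemma is simply the computational lemma that allows the identification of the critical-point equation \eqref{eqn:Cond1} with the Euler--Lagrange equation of $F$, setting up the proof of Theorem \ref{thm:OneVsManyA} via strict concavity of $F$.
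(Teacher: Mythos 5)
Your proof is correct and follows essentially the same route as the paper: the paper likewise differentiates each summand via the chain rule, computing the directional derivative $D_u \log(1 + \langle p_i, y\rangle) = \langle p_i, u\rangle/(1 + \langle p_i, y\rangle)$ and summing. Your additional remark that the logarithms are well-defined on $\inn P^\circ$ is a fine (if tacit in the paper) observation, but the substance of the argument is identical.
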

\begin{proof}
Indeed, for every $x \in \R^d$ and every $i$ we have
\[
D_u(\log(1 + \langle p_i, y \rangle)) = \frac{D_u(1 + \langle p_i, y \rangle)}{1 + \langle p_i, y \rangle} = \frac{\langle p_i, u \rangle}{1 + \langle p_i, y \rangle}
\]
\end{proof}

\begin{lem}
\label{lem:Conv1}
The function $F$ is strictly concave.
\end{lem}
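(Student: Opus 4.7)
The natural plan is to compute the Hessian of $F$ and show that it is negative definite on $\inn P^\circ$. Each summand $y \mapsto \log(1 + \langle p_i, y\rangle)$ is the composition of the concave function $t \mapsto \log(1+t)$ with the affine function $y \mapsto \langle p_i, y\rangle$, which on $\inn P^\circ$ takes values in $(-1, \infty)$ by the definition \eqref{eqn:PolarBody} of the polar. Differentiating Lemma \ref{lem:Grad1} (or applying the chain rule directly), one finds
\[
\text{Hess}\, F(y) = -\sum_{i=1}^n \frac{p_i p_i^\top}{(1 + \langle p_i, y \rangle)^2},
\]
which is a sum of negative semi-definite rank-one matrices, hence already gives concavity.

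For a vector $u \in \R^d$ one computes
\[
\langle u, \text{Hess}\, F(y)\, u \rangle = -\sum_{i=1}^n \frac{\langle p_i, u\rangle^2}{(1 + \langle p_i, y \rangle)^2},
\]
so $u$ lies in the kernel of the Hessian if and only if $\langle p_i, u\rangle = 0$ for every $i$. Thus strict concavity is equivalent to saying that the $p_i$ linearly span $\R^d$.

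The only step needing the hypothesis is to verify this spanning property. Since $0 \in \inn P$, the convex hull of $\{p_1, \ldots, p_n\}$ contains a neighborhood of the origin; in particular every direction $u \in \R^d$ can be written as $u = \sum_i \lambda_i p_i$ for suitable (not necessarily nonnegative) $\lambda_i$, so $\text{span}\{p_1, \ldots, p_n\} = \R^d$. Hence the Hessian is strictly negative definite everywhere on $\inn P^\circ$, and $F$ is strictly concave.

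The argument is essentially a routine Hessian computation, so no real obstacle arises; the only point worth stating carefully is how the geometric hypothesis $0 \in \inn P$ translates into the linear-algebraic nondegeneracy condition on $\{p_i\}$ that rules out a direction in which all the rank-one Hessian contributions vanish simultaneously.
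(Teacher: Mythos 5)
Your proof is correct and follows essentially the same route as the paper: compute the Hessian as a sum of negative semi-definite rank-one terms and observe that its kernel is trivial because the $p_i$ linearly span $\R^d$. The only cosmetic difference is that you derive the spanning property from $0 \in \inn P$, while the paper deduces it from the affine-spanning hypothesis; both are valid and the arguments are otherwise identical.
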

Basically, this follows from the strict concavity of $\log x$ on $\R$, as $F$ is a sum of logarithms of affine functions whose linear parts span $(\R^d)^*$.
\begin{proof}
Computing the second derivative of $F$ yields
\[
D_{u,u}^2F(y) = -\sum_i \frac{\langle p_i, u \rangle^2}{(1 + \langle p_i, y \rangle)^2} \le 0
\]
Besides, $D_{u,u}^2F(y) = 0$ if and only if $\langle p_i, u \rangle = 0$ for all $i$. As $p_i$ are affinely spanning $\R^d$, they are also linearly spanning it, so that all scalar products vanish only if $u = 0$.
\end{proof}

\begin{lem}
\label{lem:Coerc1}
The value $F(y)$ tends to $-\infty$ as $y$ tends to $\partial P^\circ$.
\end{lem}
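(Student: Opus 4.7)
The plan is to use two facts: (i) $P^\circ$ is bounded, so every linear form $1+\langle p_i,y\rangle$ stays bounded above on $P^\circ$; and (ii) at every boundary point of $P^\circ$, at least one of these linear forms vanishes, so the corresponding logarithm blows down to $-\infty$ while the others remain bounded above. Adding up then forces $F(y)\to-\infty$.

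More concretely, I would proceed in three short steps. First, observe that $0\in\inn P$ implies that $P^\circ$ is a bounded convex body: an $\varepsilon$-ball around $0$ sits in $P$, so every $y\in P^\circ$ must satisfy $-\varepsilon\|y\|\ge -1$, hence $\|y\|\le 1/\varepsilon$. Consequently $M\ce\max_i\max_{y\in\overline{P^\circ}}|\langle p_i,y\rangle|$ is finite, and each summand of $F$ is bounded above by $\log(1+M)$ on $P^\circ$.

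Second, characterise $\partial P^\circ$ in terms of the defining inequalities. Because $P=\conv\{p_1,\dots,p_n\}$, we have
\[
P^\circ=\{y\in\R^d\mid 1+\langle p_i,y\rangle\ge 0\text{ for }i=1,\dots,n\},
\]
so $\partial P^\circ=\{y\in\overline{P^\circ}\mid \min_i(1+\langle p_i,y\rangle)=0\}$. Thus every $y^*\in\partial P^\circ$ is witnessed by some index $i_0$ with $1+\langle p_{i_0},y^*\rangle=0$.

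Third, conclude by continuity. For $y\in\inn P^\circ$ approaching $y^*$, the factor $1+\langle p_{i_0},y\rangle$ is strictly positive but tends to $0^+$, so the $i_0$-th term $\log(1+\langle p_{i_0},y\rangle)\to-\infty$, while all other $n-1$ terms remain bounded above by $\log(1+M)$. Hence $F(y)\to -\infty$, as required. There is no real obstacle here; the only point to keep an eye on is ensuring $P^\circ$ is bounded so that nothing can escape to infinity, which is why the hypothesis $0\in\inn P$ (available from Theorem~\ref{thm:OneVsManyA}) is used.
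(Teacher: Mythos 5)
Your proof is correct and follows essentially the same route as the paper's: boundedness of $P^\circ$ (coming from $0\in\inn P$) gives a uniform upper bound on every summand, while at any boundary point some factor $1+\langle p_{i_0},y\rangle$ tends to $0^+$, sending that logarithm, and hence the whole sum, to $-\infty$. You merely spell out the boundedness estimate and the facet description of $\partial P^\circ$ in more detail than the paper does.
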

\begin{proof}
As $y \to \partial P^\circ$, some of $1 + \langle p_i, y \rangle$ tend to $0$, and their logarithms tend to $-\infty$. On the other hand, since $P^\circ$ is bounded, all summands in \eqref{eqn:F1} are bounded from above. Thus the whole sum tends to $-\infty$ as $y \to \partial P^\circ$.
\end{proof}

\begin{proof}[Proof of Theorem \ref{thm:OneVsManyA}]
By Lemma \ref{lem:Grad1}, a point $y \in \inn P^\circ$ satisfies \eqref{eqn:Cond1} if and only if $y$ is a critical point of the function $F$ from \eqref{eqn:F1}. By Lemma \ref{lem:Coerc1}, $F$~attains a maximum on $\inn P^\circ$. The point of maximum is a critical point, and this proves the existence part of the theorem.

To prove the uniqueness, use the strict concavity of $F$, Lemma \ref{lem:Conv1}. It implies that all critical points of $F$ are strict local maxima. Since $\inn P^\circ$ is convex, there cannot be more than one strict local maximum, and the theorem is proved.
\end{proof}

\subsection{The same, from a homogeneous point of view}
\label{sec:HomogPoints}
Here we repeat the argument from the previous section in the spirit of Section \ref{sec:ConeSec}. This serves as a preparation to some of the arguments that will follow.

Let $R_1, \ldots, R_n \subset \R^{d+1}$ be open rays issued from the origin, linearly spanning $\R^{d+1}$, and contained in an open half-space whose boundary goes through the origin. Denote $C = \conv(\bigcup_{i=1}^n R_i)$. We want to show that for every open ray $S$ issued from the origin and contained in the interior of $C$ there is a hyperplane $H$ that intersects all rays and whose intersection point with $S$ is the centroid of the intersection points with $\{R_i\}$:
\begin{equation}
\label{eqn:BC1b}
H \cap S = \bc(H \cap R_1, \ldots, H \cap R_n)
\end{equation}
For this we choose arbitrary points $p_i \in R_i$, $q \in S$ and introduce the function
\begin{equation}
\label{eqn:F1Homog}
F(y) \ce \frac1n \sum_{i=1}^n \log \langle p_i, y \rangle - \log \langle q, y \rangle
\end{equation}
defined in the interior of the cone
\[
C^* \ce \{y \in \R^{d+1} \mid \langle p_i, y \rangle \ge 0\}
\]
(since all $p_i$ lie in an open half-space, $\inn C^* \ne \emptyset$). We compute the gradient
\[
\grad F(y) = \frac1n \sum_{i=1}^n \frac{p_i}{\langle p_i, y \rangle} - \frac{q}{\langle q, y \rangle}
\]
and find that $\grad F(y) = 0$ if and only if the hyperplane $\{x \mid \langle x, y \rangle = 1\}$ satisfies the condition \eqref{eqn:BC1b}.
On the other hand,
\[
D^2_{u,u}F(y) = -\frac1n \sum_{i=1}^n \frac{\langle p_i, u \rangle^2}{\langle p_i, y \rangle^2} + \frac{\langle q, u \rangle^2}{\langle q, y \rangle^2}
\]
As we will see in a minute, the function $F$ is neither convex nor concave, which complicates the search for a critical point. The remedy is to restrict $F$ to the hyperplane $\{y \mid \langle q, y \rangle = 1\}$.

\begin{lem}
\label{lem:CritPtProj}
Critical points of $F$ restricted to $\{y \mid \langle q, y \rangle = 1\}$ correspond to hyperplanes $H$ through $q$ that satisfy the condition \eqref{eqn:BC1b}.
\end{lem}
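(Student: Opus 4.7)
The plan is to apply the method of Lagrange multipliers to $F$ subject to the linear constraint $\langle q, y\rangle = 1$, and to observe that the Lagrange multiplier is forced to equal $1$.

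First, I would note that on the affine hyperplane $\{y \mid \langle q, y\rangle = 1\}$ the second summand of \eqref{eqn:F1Homog} vanishes identically, so the restriction of $F$ coincides there with the restriction of $y \mapsto \frac1n \sum_{i=1}^n \log\langle p_i, y\rangle$. Since the normal to the constraint hyperplane is $q$, by Lagrange multipliers a point $y \in \inn C^* \cap \{\langle q, y\rangle = 1\}$ is critical for the restriction if and only if
\[
\frac1n \sum_{i=1}^n \frac{p_i}{\langle p_i, y\rangle} = \lambda q
\]
for some $\lambda \in \R$.

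Next, I would pair both sides of this equation with $y$: the left-hand side collapses to $\frac1n \sum_{i=1}^n 1 = 1$ and the right-hand side becomes $\lambda \langle q, y\rangle = \lambda$, so $\lambda = 1$. The Lagrange equation thus sharpens to
\[
q = \frac1n \sum_{i=1}^n \frac{p_i}{\langle p_i, y\rangle}.
\]

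Finally, I would interpret this geometrically. The hyperplane $H \ce \{x \in \R^{d+1} \mid \langle x, y\rangle = 1\}$ passes through $q$ thanks to the constraint, meets the ray $R_i$ at the point $p_i/\langle p_i, y\rangle$, and meets $S$ at $q$ itself, so the displayed equality is precisely condition \eqref{eqn:BC1b}. Conversely, every hyperplane through $q$ that meets each ray $R_i$ at a positive point is of the unique form $\{x \mid \langle x, y\rangle = 1\}$ with $y \in \inn C^*$ and $\langle q, y\rangle = 1$, so the correspondence between critical points of the restriction and such hyperplanes is a bijection. There is no real obstacle in this argument; the only mildly clever step is pairing with $y$ to force the Lagrange multiplier to be $1$ without any extra computation.
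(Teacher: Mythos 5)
Your proof is correct and takes essentially the same route as the paper: a critical point of the restriction is one where the gradient is orthogonal to the constraint hyperplane, i.e.\ collinear with $q$. You additionally spell out the step the paper leaves implicit, namely pairing the Lagrange condition with $y$ to force the multiplier to equal $1$ (equivalently, using that $F$ is homogeneous of degree $0$, so $\langle \grad F(y), y\rangle = 0$).
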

\begin{proof}
Indeed, $y$ is a critical point of the restriction if and only if $\grad F(y)$ is orthogonal to $\{y \mid \langle q, y \rangle = 1\}$, that is collinear with $q$.
\end{proof}

\begin{lem}
\label{lem:ConcProj}
The restriction of the function $F$ to $\inn C^* \cap \{y \mid \langle q, y \rangle = 1\}$ is strictly concave.
\end{lem}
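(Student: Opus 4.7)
The plan is to compute the Hessian of $F$ restricted to the affine hyperplane $H_q \ce \{y \mid \langle q,y\rangle = 1\}$ and verify that it is negative definite on every tangent space, directly from the formula for $D^2_{u,u}F(y)$ already displayed in the paper.

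First, I would note that a tangent vector $u$ to $H_q$ at any point is characterized by $\langle q, u\rangle = 0$. Plugging such a $u$ into
\[
D^2_{u,u}F(y) = -\frac{1}{n}\sum_{i=1}^n \frac{\langle p_i, u\rangle^2}{\langle p_i, y\rangle^2} + \frac{\langle q, u\rangle^2}{\langle q, y\rangle^2},
\]
the offending second term, which is what prevents $F$ from being concave on all of $\inn C^*$, vanishes identically. Thus the Hessian of the restriction at a point $y \in \inn C^* \cap H_q$ is
\[
D^2_{u,u}(F|_{H_q})(y) = -\frac{1}{n}\sum_{i=1}^n \frac{\langle p_i, u\rangle^2}{\langle p_i, y\rangle^2}\quad\text{for every }u\text{ with }\langle q,u\rangle = 0,
\]
which is manifestly non-positive since $\langle p_i, y\rangle > 0$ on $\inn C^*$.

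To upgrade this to strict concavity I would argue: suppose the above expression vanishes for some tangent vector $u$. Then $\langle p_i, u\rangle = 0$ for every $i$. By assumption the rays $R_i$ linearly span $\R^{d+1}$, so the chosen representatives $p_i \in R_i$ also span $\R^{d+1}$ linearly, forcing $u = 0$. Hence the restricted Hessian is negative definite at every point of $\inn C^* \cap H_q$, and since the latter set is convex (as the intersection of a convex cone with an affine hyperplane), $F|_{\inn C^* \cap H_q}$ is strictly concave.

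There is no real obstacle here: the whole content of the lemma is the observation that restricting to the hyperplane $\langle q,y\rangle = 1$ kills precisely the positive term in the Hessian that was responsible for $F$ failing to be concave on all of $\inn C^*$, after which strict concavity follows from the linear spanning property of the $p_i$ exactly as in Lemma \ref{lem:Conv1}.
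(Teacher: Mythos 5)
Your proof is correct and is essentially the paper's argument: restrict to tangent vectors $u$ with $\langle q,u\rangle=0$, observe that the positive term in $D^2_{u,u}F$ vanishes, and use the linear spanning of the $p_i$ to get strict negativity (the paper's one-line proof even contains a sign typo, ``$D^2_{u,u}>0$'', where your careful version correctly concludes $<0$). Nothing further is needed.
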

\begin{proof}
A vector $u$ is tangent to the hyperplane $\{y \mid \langle q, y \rangle = 1\}$ if and only if $\langle q, u \rangle = 0$. Since $\{p_i\}$ linearly span $\R^{d+1}$, this implies $D^2_{u,u} > 0$ for $u \ne 0$.
\end{proof}

Lemmas \ref{lem:CritPtProj} and \ref{lem:ConcProj} imply the existence and uniqueness of a hyperplane $H$ through $q$ for which $q$ is the centroid of the intersection points with the rays $\{R_i\}$. This gives another proof of Theorem \ref{thm:OneVsMany}, now in Formulation~\ref{ref:B}.

\begin{prp}
The second derivative of the function $F$ has signature $(0, -, \ldots, -)$ at the critical points of $F$ and signature $(+, -, \ldots, -)$ at the non-critical points.
\end{prp}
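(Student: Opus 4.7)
The plan is to exploit the homogeneity of $F$ and reduce everything to Lemma \ref{lem:ConcProj} plus a short non-degeneracy check. Since $F(\lambda y) = F(y)$ for all $\lambda > 0$, the function $F$ is homogeneous of degree $0$, so by Euler's identity $\langle \grad F(y), y \rangle = 0$; differentiating once more in the direction $y$ yields
\[
H(y)\,y = -\grad F(y),
\]
where $H(y)$ denotes the Hessian of $F$ at $y$. By Lemma \ref{lem:ConcProj}, $H(y)$ is strictly negative definite on the $d$-dimensional linear subspace $V \ce \{v \in \R^{d+1} : \langle q, v \rangle = 0\}$; because $\langle q, y \rangle > 0$ for every $y \in \inn C^*$, the vector $y$ is transverse to $V$, so $V \oplus \R y = \R^{d+1}$.

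At a critical point, $\grad F(y) = 0$, and the identity above gives $H(y)\,y = 0$, so $y \in \ker H(y)$. Combined with strict negative-definiteness on the transverse hyperplane $V$, this forces the signature to be $(0, -, \ldots, -)$.

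At a non-critical point I would first check that $H(y)$ is non-degenerate. Suppose $H(y)u = 0$ and write $u = c_0 y + v$ with $v \in V$. Pairing with $y$ and using $\langle \grad F(y), y \rangle = 0$ gives $\langle \grad F(y), v \rangle = 0$; pairing with $v$ then yields $H(v,v) = 0$, which forces $v = 0$ by negative-definiteness on $V$; the leftover equation $-c_0\grad F(y) = 0$ forces $c_0 = 0$ since $\grad F(y) \neq 0$. Hence $H(y)$ has full rank $d+1$. Since it has at least $d$ negative eigenvalues (from its restriction to $V$) but fails to be negative definite (as $H(y,y) = 0$), the signature must be $(+, -, \ldots, -)$. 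The only delicate step is this non-degeneracy argument, which nevertheless reduces to the two short inner-product pairings above.
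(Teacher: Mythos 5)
Your proof is correct and follows essentially the same route as the paper: both arguments rest on the isotropy of $y$ for $D^2F(y)$, the negative definiteness of $D^2F(y)$ on the hyperplane $q^\perp$ coming from Lemma \ref{lem:ConcProj}, and the identity $D^2F(y)\,y = -\grad F(y)$, which makes membership of $y$ in the kernel equivalent to criticality. You obtain these facts from degree-$0$ homogeneity and Euler's identity rather than from the explicit formula for $D^2F$, and you spell out the non-degeneracy check that the paper leaves implicit, but the skeleton of the argument is identical.
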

\begin{proof}
For any $y \in \inn C^*$ the vector $y$ is isotropic for $D^2F(y)$:
\[
D^2_{y,y}F(y) = 0
\]
Since by Lemma \ref{lem:ConcProj} the quadratic form $D^2F(y)$ has a $d$-dimensional positive subspace, its signature is either $(0, -, \ldots, -)$ or $(+, -, \ldots, -)$, depending on whether the isotropic vector $y$ belongs to the kernel or not. It belongs to the kernel if and only if $\grad F(y) = 0$.
\end{proof}

\subsection{Polarity with respect to a union of hyperplanes}
Fix the points $p_1, \ldots, p_n$ affinely spanning $\R^d$. Theorem \ref{thm:OneVsMany} says that there is a bijection between the points in the interior of $\conv\{p_i\}$ and the hyperplanes disjoint from $\conv\{p_i\}$: a hyperplane $\ell$ corresponds to the point $q \in \inn \conv\{p_i\}$ that becomes the centroid of $\{p_i\}$ when $\ell$ is sent to infinity by a projective transformation. (In particular, the hyperplane at infinity corresponds to the actual centroid of $\{p_i\}$.) In this section we will relate this correspondence to the polarity with respect to an algebraic set.

For a homogeneous degree $n$ polynomial $f$ on a vector space $V$ denote by the same letter $f$ the corresponding $n$-linear symmetric form:
\[
f(x_1, \ldots, x_n) = f(x_{\sigma(1)}, \ldots, x_{\sigma(n)}), \quad f(x, \ldots, x) = f(x)
\]
Let $P(V) = V/x \sim \lambda x$ denote the projectivization of the vector space $V$.
\begin{dfn}
The \emph{$(n-1)$-st kernel} of the $n$-linear symmetric form $f$ is
\[
\ker_{n-1}f \ce \{x \in V \mid f(x, \ldots, x, y) = 0 \, \forall y \in V\}
\]
The \emph{$(n-1)$-st polar} of a point $[x] \in P(V \setminus \ker_{n-1}f)$ with respect to the projective algebraic set $\{[x] \in V \mid f(x) = 0\}$ is the projective hyperplane
\[
\{[y] \in V \mid P(x, \ldots, x, y) = 0\}
\]
\end{dfn}
By the canonical duality, hyperplanes in $V$ correspond to one-dimensional subspaces of $V^*$. Thus the polarity determines a map $P(V \setminus \ker_{n-1}f) \to P(V^*)$. Below we consider a polynomial $f$ on $V^*$, therefore will have to do with a map
\begin{equation}
\label{eqn:Polarity}
P(V^* \setminus \ker_{n-1}f) \to P(V)
\end{equation}

\begin{prp}
\label{prp:Polarity}
Let $\ell_1, \ldots, \ell_n$, and $m$ be $1$-dimensional subspaces of a vector space $V$, and let $\rho \in V^*$ be a linear functional on $V$ such that $\ker \rho$ doesn't contain any of $\ell_i, m$. Then the following conditions are equivalent.
\begin{enumerate}
\item
The centroid of the points where the lines $\ell_1, \ldots, \ell_n$ intersect the hyperplane $\{\rho(X) = 1\}$ lies on the line $m$.
\item
The polar of $[\rho] \in P(V^*)$ with respect to the algebraic set $\bigcup_{i=1}^n \ker(\ell_i) \subset P(V^*)$ is $m \in P(V)$.
\end{enumerate}
\end{prp}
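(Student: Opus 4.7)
The plan is to translate both conditions into statements about a single explicit vector in $V$ and verify they coincide. I fix nonzero representatives $p_i \in \ell_i$ and $q \in m$. The hypothesis that $\ker\rho$ avoids all $\ell_i$ guarantees $\rho(p_i) \neq 0$, so the line $\ell_i$ meets the affine hyperplane $\{\rho(X) = 1\}$ in the single point $p_i/\rho(p_i)$, and condition (1) reads
\[
v \;:=\; \sum_{i=1}^n \frac{p_i}{\rho(p_i)} \in m, \qquad \text{i.e.\ $[v] = m$ in $P(V)$.}
\]

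For condition (2), the algebraic set $\bigcup_i \ker(\ell_i) \subset P(V^*)$ is the zero locus of the homogeneous degree-$n$ polynomial $f(y) = \prod_{i=1}^n y(p_i)$ on $V^*$. Its symmetric multilinearization is the usual average $f(y_1,\ldots,y_n) = \frac{1}{n!}\sum_{\sigma \in S_n} \prod_i y_{\sigma(i)}(p_i)$. I then compute $f(\rho,\ldots,\rho,y)$ by the standard bookkeeping trick: for each $j$, there are exactly $(n-1)!$ permutations placing the unique non-$\rho$ argument in the slot labelled by $p_j$, which gives
\[
f(\rho,\ldots,\rho,y) \;=\; \frac{1}{n}\sum_{j=1}^n y(p_j) \prod_{i \neq j} \rho(p_i) \;=\; \frac{\prod_i \rho(p_i)}{n}\cdot y(v).
\]

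Since $\prod_i \rho(p_i) \neq 0$, the $(n-1)$-st polar of $[\rho]$ is the projective hyperplane $\{[y] \in P(V^*) : y(v) = 0\}$, and under the canonical duality that identifies hyperplanes of $P(V^*)$ with points of $P(V)$ this hyperplane corresponds to $[v] \in P(V)$. Hence (2) is the statement $[v] = m$, which is exactly (1). I do not expect any real obstacle: the only subtleties are the symmetrization identity displayed above and the well-definedness of the polar (equivalently, $v \neq 0$, equivalently $\rho \notin \ker_{n-1}f$), which is automatic from the hypothesis that the polar equals the genuine point $m$.
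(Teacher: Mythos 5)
Your proof is correct and follows essentially the same route as the paper's: both reduce the two conditions to the statement that $m$ is spanned by $v=\sum_i p_i/\rho(p_i)$, via the same polynomial $f(y)=\prod_i y(p_i)$ and the same computation of $f(\rho,\dots,\rho,y)$. (A tiny aside: the non-vanishing $v\neq 0$ is automatic simply because $\rho(v)=n$, not because of condition (2).)
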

\begin{proof}
Choose arbitrary points $p_i \in \ell_i$ and $q \in m$ different from the origin. By assumption, $\rho(p_i) \ne 0$, $\rho(q) \ne 0$. The intersection points of the corresponding lines with $\{\rho(X) = 1\}$ are
\begin{equation}
\label{eqn:PointsOnLines}
\frac{p_1}{\rho(p_1)}, \ldots, \frac{p_n}{\rho(p_n)}, \text{ and } \frac{q}{\rho(q)}
\end{equation}
Thus the first condition is equivalent to
\begin{equation}
\label{eqn:FirstCond}
\frac{q}{\rho(q)} = \frac1n \sum_{i=1}^n \frac{p_i}{\rho(p_i)}
\end{equation}

On the other hand, for the polynomial
\[
f(\rho) = \rho(p_1) \cdot \ldots \cdot \rho(p_n)
\]
on $V^*$ we have
\[
f(\rho, \ldots, \rho, \sigma) = \frac1n f(\rho) \sum_{i=1}^n \frac{\sigma(p_i)}{\rho(p_i)}
\]
Thus the polar of $\rho$ is the following hyperplane in $V^*$:
\[
\left\{\sigma \in V^* \,\left|\, \sum_{i=1}^n \frac{\sigma(p_i)}{\rho(p_i)} = 0\right\}\right.
\]
Dually, this is the $1$-dimensional subspace of $V$ spanned by $\sum_{i=1}^n \frac{p_i}{\rho(p_i)}$. Thus the second condition is equivalent to
\[
q = \lambda \sum_{i=1}^n \frac{p_i}{\rho(p_i)}
\]
which is equivalent to \eqref{eqn:FirstCond} and thus to the first condition.
\end{proof}

In the Reformulation B of our problem about projective transformations (see Section \ref{sec:HomogPoints}) we intersect with a hyperplane not a collection of lines, but a collection of rays. This means that each of the points \eqref{eqn:PointsOnLines} is assumed to lie in a specified half of the corresponding line, i.~e. the number $\rho(p_i)$, respectively $\rho(q)$ must have a specified sign. In other words, the point $[\rho] \in P(V^*)$ must lie in a specified component of the complement $P(V^*) \setminus \bigcup_{i=1}^n \ker p_i$. By counting the components one can determine the multiplicity of the map \eqref{eqn:Polarity}, i.~e. the number of classes of projective transformations that send $q$ to the centroid of the images of $\{p_i\}$.

\begin{prp}
\label{prp:NonAdm}
Let $p_1, \ldots, p_n, q \in \R^d$ be in general position, that is each $d+2$ of them affinely independent. Then there are exactly $\frac{(n-1)(n-2)}2$ equivalence classes of projective transformations $\phi$ modulo post-composition with affine transformations such that
\[
\frac1n \sum_{i=1}^n \phi(p_i) = \phi(q)
\]
\end{prp}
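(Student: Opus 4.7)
The plan is to realize the desired equivalence classes as critical points of a strictly concave function on an affine chart of $P(V^{*})$, and then to count them as bounded cells of a hyperplane arrangement. By Proposition~\ref{prp:Polarity} (equivalently, Reformulation~B), an equivalence class $[\phi]$ satisfying $\frac{1}{n}\sum\phi(p_{i})=\phi(q)$ corresponds to a point $[\rho]\in P(V^{*})$, $V=\R^{d+1}$, $\hat x\ce(1,x)$, with $\rho(\hat p_{i})\ne 0$ for every $i$, $\rho(\hat q)\ne 0$, and such that $\sum_{i}\hat p_{i}/\rho(\hat p_{i})$ is proportional to $\hat q$ in $V$. All such $[\rho]$ lie in the affine chart $U\ce\{\rho\in V^{*}:\rho(\hat q)=1\}$, so I would set
\[
F(\rho)\ce\frac{1}{n}\sum_{i=1}^{n}\log|\rho(\hat p_{i})|,
\]
a smooth function on $U\setminus\bigcup_{i}H_{i}$ with $H_{i}\ce\{\rho(\hat p_{i})=0\}\cap U$.

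Differentiating $F$ and using the tangent condition $u(\hat q)=0$ along $U$ shows that $\grad F(\rho)=0$ iff $\frac{1}{n}\sum_{i}\hat p_{i}/\rho(\hat p_{i})$ is parallel to $\hat q$, which is exactly the sought centroid equation. The Hessian
\[
D^{2}F=-\frac{1}{n}\sum_{i}\frac{\hat p_{i}\otimes\hat p_{i}}{\rho(\hat p_{i})^{2}}
\]
is negative definite on tangent directions: the only $u\in V^{*}$ with $u(\hat q)=0$ and $u(\hat p_{i})=0$ for every $i$ is $u=0$, since under the general-position hypothesis (naturally read as ``each $d+1$ of $p_{1},\dots,p_{n},q$ are affinely independent'') the vectors $\hat q,\hat p_{1},\dots,\hat p_{n}$ span $V$. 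Hence $F$ is strictly concave on every connected component of $U\setminus\bigcup H_{i}$. The boundary analysis is then decisive: $F\to -\infty$ along every $H_{i}$, while along any direction $v\ne 0$ with $v(\hat q)=0$, general position forces some $v(\hat p_{i})\ne 0$, so at least one $\log|\rho(\hat p_{i})|$ grows like $\log|t|$ as $t\to\infty$ and $F\to+\infty$ at infinity in $U$.

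Consequently $F$ attains a unique maximum (its unique critical point) in each bounded component of the arrangement, and has no critical point in any unbounded component (a concave function that is unbounded above cannot have a local maximum). The count of solutions is therefore the number of bounded regions of $n$ affine hyperplanes in general position in $\R^{d}$, namely $\binom{n-1}{d}$ by Zaslavsky's theorem; for $d=2$ this specializes to the stated $\tfrac{(n-1)(n-2)}{2}$. The main technical obstacles are the Hessian/boundary analysis described above, and ruling out degenerate critical points at which $\sum_{i}\hat p_{i}/\rho(\hat p_{i})$ is the zero vector (which would correspond to no projective class); genericity of the configuration excludes these.
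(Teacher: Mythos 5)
Your proof is correct and follows the same underlying strategy as the paper's: both identify the solution classes with certain regions of the hyperplane arrangement $\bigcup_i \ker\hat{p_i}$ in $P(V^*)$ --- namely those disjoint from $\ker\hat q$, which in your affine chart $\{\rho(\hat q)=1\}$ are exactly the bounded regions --- and both obtain exactly one solution per such region. The difference lies in how each step is carried out. For the one-solution-per-region step, the paper simply applies Theorem~\ref{thm:OneVsMany} inside each region (the condition that the region avoid $\ker\hat q$ being equivalent to $\hat q/\rho(\hat q)$ lying in $\inn\conv\{\hat{p_i}/\rho(\hat{p_i})\}$), whereas you reprove it directly via the strict concavity of $\frac1n\sum_i\log|\rho(\hat{p_i})|$ together with its behavior at the walls and at infinity; this is essentially the homogeneous functional of Section~\ref{sec:HomogPoints} extended by absolute values to all sign regions, so nothing is lost or gained there. (One small simplification: the ``degenerate'' critical points with $\sum_i\hat{p_i}/\rho(\hat{p_i})=0$ that you worry about cannot occur at all, since applying $\rho$ to that sum gives $n\ne 0$; no genericity is needed for this.) Where your argument genuinely improves on the paper's is the final count: the number of bounded regions of $n$ generic hyperplanes in $\R^d$ is $\binom{n-1}{d}$, and this --- not $\frac{(n-1)(n-2)}{2}$ --- is the correct answer for general $d$. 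The paper's count of $\frac{n(n-1)}{2}+1$ regions in total, $n$ of them meeting $\ker\hat q$, is the $d=2$ instance of $\sum_{k=0}^{d}\binom{n-1}{k}$ and $\sum_{k=0}^{d-1}\binom{n-1}{k}$, so the proposition as stated holds only for $d=2$ (note also that ``each $d+2$ of them affinely independent'' must be read as ``each $d+1$'', as you tacitly did, since $d+2$ points of $\R^d$ are never affinely independent).
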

\begin{proof}
Let $\hat{p_i} = (1,p_i) \in \R^{d+1}$ and $\hat q = (1,q) \in \R^{d+1}$. Let $\ell_i, m \subset \R^{d+1}$ be the $1$-dimensional subspace spanned by $\hat{p_i}$, respectively by $\hat q$. Let $\rho \in (\R^{d+1})^*$ be such that $\rho(\hat{p_i}) \ne 0$, $\rho(\hat q) \ne 0$.

For the points \eqref{eqn:PointsOnLines} (with $\hat{p_i}$ and $\hat q$ instead of $p_i$ and $q$) in the affine hyperplane $\{\rho(x) = 1\}$ there is a unique class of admissible projective transformations making $q$ to the centroid of $\{p_i\}$ if and only if $\frac{\hat q}{\rho(\hat q)}$ lies in the interior of the convex hull of $\frac{\hat{p_i}}{\rho(\hat{p_i})}$. The latter condition says that $\rho$ belongs to a component of $P(V^*) \setminus \bigcup_{i=1}^n \ker \hat{p_i}$ that is disjoint from $\ker \hat q$. Besides, any two functionals from the same component give rise to the same class of projective transformations. There are $\frac{n(n-1)}2 + 1$ components in total, and $\ker \hat q$ intersects $n$ of them, which leads to the number in the proposition.
\end{proof}

%
%

\subsection{Several points vs. several}
\begin{proof}[Proof of Theorem \ref{thm:ManyVsMany}]
Without loss of generality $0 \in \conv(K_1)$, so that the hyperplane sent to infinity by $\phi$ cannot pass through the origin. By Proposition \ref{prp:A} we may look for $\phi$ among the maps of the form \eqref{eqn:PhiY}. The condition $\bc(\phi(K_1)) = \bc(\phi(K_2))$ then says
\begin{equation}
\label{eqn:ManyMany}
\frac1n \sum_{i=1}^n \frac{p_i}{1 + \langle p_i, y \rangle} = \frac1m \sum_{j=1}^m \frac{q_j}{1 + \langle q_j, y \rangle}
\end{equation}

Similar to the proof of Theorem \ref{thm:OneVsManyA}, the solutions of \eqref{eqn:ManyMany} are the critical points of the function
\begin{equation}
\label{eqn:ManyManyGrad}
\frac1n \sum_{i=1}^n \log(1 + \langle p_i, y \rangle) - \frac1m \sum_{j=1}^m \log(1 + \langle q_j, y \rangle)
\end{equation}
defined in the interior of $(\conv K_1)^\circ$.
The assumption $\conv K_2 \subset \inn \conv K_1$ implies $(\conv K_1)^\circ \subset \inn ((\conv K_2)^\circ)$, so that the function tends to $-\infty$ as $y$ tends to the boundary of $\conv(K_1)^\circ$, and hence attains its maximum. The point of minimum yields a desired projective transformation.

For non-uniqueness, see Examples \ref{exl:CounterManyMany1} and \ref{exl:CounterManyMany2}.
\end{proof}

\begin{rem}
The sum \eqref{eqn:ManyManyGrad} may tend to $-\infty$ under less restrictive assumptions than $\conv K_2 \subset \inn \conv K_1$. For example, it does so when $K_1$ consists of the vertices of a triangle in $\R^2$, and $K_2$ of three points on the sides of the triangle.

On the other hand, if $K_1$ is the vertex set of a tetrahedron in $\R^3$, and $K_2$ consists of three points on one edge and two points on the opposite edge, then there is no projective transformation that fits the centroids of $K_1$ and $K_2$ (the centroid of a tetrahedron lies in the plane parallel to a pair of opposite edges and equidistant from them). In particular, in this case the sum \eqref{eqn:ManyManyGrad} does not tend to $-\infty$ near the boundary of the domain.
\end{rem}

\begin{figure}[ht]
\begin{center}
\includegraphics{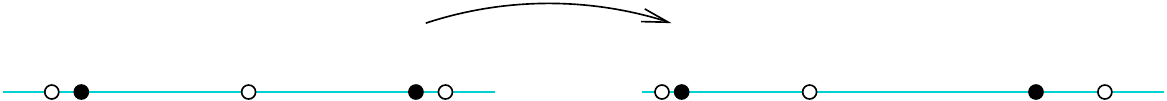}
\end{center}
\caption{Example \ref{exl:CounterManyMany1}: two subsets of the line whose centroids can be fitted together in different ways.}
\end{figure}

\begin{exl}
\label{exl:CounterManyMany1}
Let $d=1$, $K_1 = \{-1, 0, 1\}$, $K_2 = \left\{-\frac{3}{\sqrt{13}}, \frac{3}{\sqrt{13}}\right\}$. Then $\bc(K_1) = \bc(K_2)$, so that one solution is the identity map. There is another solution $\phi(x) = \frac{x}{3-x}$. Indeed, we have
\[
\phi(K_1) = \left\{ -\frac14, 0, \frac12 \right\}, \quad \phi(K_2) = \left\{ -\frac1{\sqrt{13}+1}, \frac1{\sqrt{13}-1} \right\}
\]
and thus $\bc(\phi(K_1)) = \frac1{12} = \bc(\phi(K_2))$.
\end{exl}

One may argue that the above example only works because $K_1$ is not in convex position. For $d=1$ this is actually true: if $K_1 = \{-1, 1\}$ and $K_2 \subset (-1, 1)$, then the solution is unique. In higher dimensions this does not help, as the following example shows. The reason for the failure is that even if $K_1$ is in convex position, its projections are not.

\begin{figure}[ht]
\begin{center}
\includegraphics{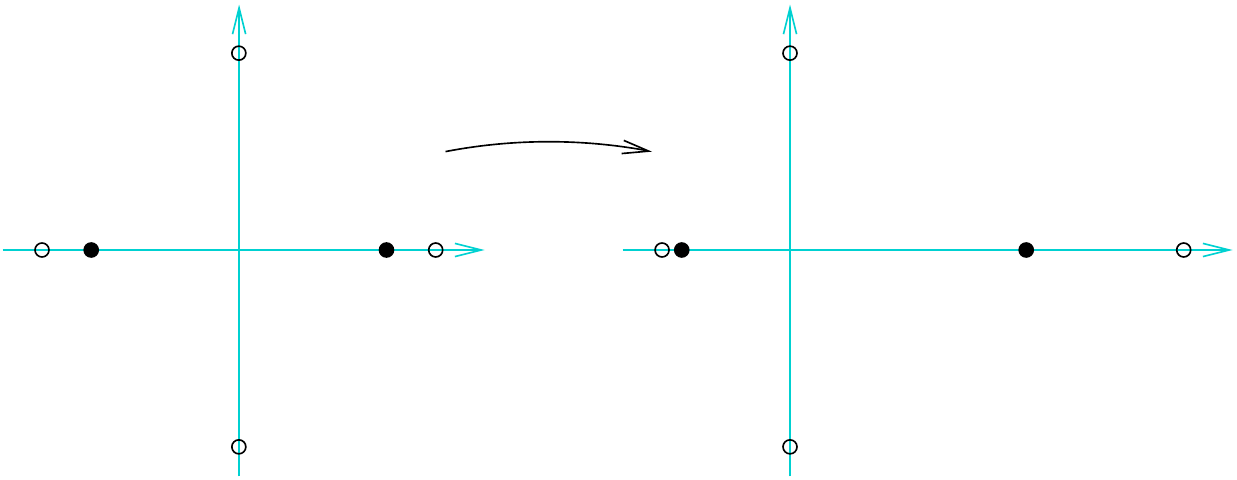}
\end{center}
\caption{Example \ref{exl:CounterManyMany2}: two subsets of the plane whose centroids can be fitted together in different ways.}
\end{figure}

\begin{exl}
\label{exl:CounterManyMany2}
Take the following subsets of the plane:
\[
K_1 = \left\{ \begin{pmatrix} -1\\ 0 \end{pmatrix}, \begin{pmatrix} 1\\ 0 \end{pmatrix}, \begin{pmatrix} 0\\ -1 \end{pmatrix}, \begin{pmatrix} 0\\ 1 \end{pmatrix} \right\}, \quad K_2 = \left\{ \begin{pmatrix} \frac{2}{\sqrt{7}}\\ 0 \end{pmatrix}, \begin{pmatrix} -\frac2{\sqrt{7}}\\ 0 \end{pmatrix} \right\}
\]
Again, both have centroid at the origin. Their images under the projective transformation $(x,y) \mapsto \left( \frac{x}{2-x}, \frac{y}{2-x}\right)$ both have centroids at $(\frac16, 0)$.
\end{exl}

\section{Point vs. a body}
\label{sec:PointBody}
\subsection{Cone sections and the Santal\'o point}
\label{sec:ConeSecSant}
A subset $C \subset \R^{d+1}$ is called a \emph{cone}, if $x \in C \Rightarrow \lambda x \in C\, \forall \lambda \ge 0$. A cone $C$ is \emph{pointed}, if $C \setminus \{0\}$ is contained in an open halfspace whose boundary hyperplane passes through the origin. A closed pointed cone possesses bounded sections by affine hyperplanes. We will consider only those sections that intersect each ray of the cone, and call them \emph{complete}. A pointed cone is the conical hull \eqref{eqn:ConeHull} of any of its complete sections.

For $K \subset \R^d$ put $C = \cone\{(1,x) \mid x \in K\}$.
Due to Propositions \ref{prp:A} and \ref{prp:B}, Theorem \ref{thm:OneVsBody} is equivalent to the following.

\setcounter{thmB}{1}
\begin{thmB}
\label{thm:OneVsBodyB}
Let $C \subset \R^{d+1}$ be a full-dimensional pointed cone with cusp-free affine sections. Then for every point $q \in \inn\conv C$ there exists a unique complete affine section of $C$ with centroid at~$q$.
\end{thmB}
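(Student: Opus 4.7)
The plan is to prove Theorem~\ref{thm:OneVsBodyB} in its equivalent affine formulation (Theorem~\ref{thm:OneVsBody}), the equivalence being the content of Propositions~\ref{prp:A} and~\ref{prp:B}. After translating by $-q$, assume $q = 0 \in \inn\conv K$, where $K$ is compact, $d$-dimensional and cusp-free. By Proposition~\ref{prp:A}, admissible projective transformations modulo affinities are parametrized by $y \in \inn K^\circ$ via $\phi_y(x) = x/(1+\langle x,y\rangle)$, and since $\phi_y(0)=0$ the task becomes: find a unique $y \in \inn K^\circ$ with $\bc(\phi_y(K))=0$.

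Following the pattern of Theorem~\ref{thm:OneVsManyA}, I would realise such $y$ as the unique critical point of a strictly convex functional. A direct computation shows that the Jacobian determinant of $\phi_y$ at $x$ equals $(1+\langle x,y\rangle)^{-(d+1)}$, hence
\[
F(y) \ce \int_K \frac{dx}{(1+\langle x,y\rangle)^{d+1}} \;=\; \vol(\phi_y(K)),\qquad y \in \inn K^\circ.
\]
Differentiating under the integral and applying the same change of variables,
\[
\grad F(y) = -(d+1)\int_K \frac{x\,dx}{(1+\langle x,y\rangle)^{d+2}} = -(d+1)\,\vol(\phi_y(K))\,\bc(\phi_y(K)),
\]
so critical points of $F$ correspond exactly to the $y$ we seek. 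A second differentiation yields
\[
D^2_{u,u}F(y) = (d+1)(d+2)\int_K \frac{\langle x,u\rangle^2\,dx}{(1+\langle x,y\rangle)^{d+3}},
\]
which is strictly positive for $u\ne 0$ because $K$ is $d$-dimensional and so the linear form $\langle\,\cdot\,,u\rangle$ cannot vanish identically on $K$. Thus $F$ is strictly convex on $\inn K^\circ$, and any critical point is the unique global minimum.

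For existence it remains to verify that $F$ is coercive, i.e.\ $F(y)\to +\infty$ as $y$ approaches $\partial K^\circ$. For $y_0 \in \partial K^\circ$ the affine hyperplane $\{1+\langle x,y_0\rangle = 0\}$ supports $\conv K$ and meets $K$ at some point $x_0 \in K \cap \partial\conv K$. This is where the cusp-free hypothesis enters: it supplies a $d$-simplex $T \subset K$ with vertex at $x_0$. An affine change of coordinates placing $x_0$ at the origin and the supporting hyperplane at $\{x_1 = 0\}$ turns $1+\langle x,y_0\rangle$ into a positive multiple of $x_1$ on $T$, so the limiting integrand on $T$ is comparable to $x_1^{-(d+1)}$; since the transverse slices of $T$ have $(d-1)$-volume bounded away from zero near the vertex, the integral $\int_T x_1^{-(d+1)}\,dx$ diverges. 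Fatou's lemma applied to any sequence $y_n \to y_0$ then gives $\liminf_n F(y_n) \ge \int_K (1+\langle x,y_0\rangle)^{-(d+1)}\,dx = +\infty$. Combined with strict convexity, coercivity guarantees a unique minimum of $F$ in $\inn K^\circ$, which is the unique critical point, and the proof is complete.

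The main obstacle is the coercivity step: cusp-freeness is precisely the hypothesis that forces divergence of $\int_K(1+\langle x,y_0\rangle)^{-(d+1)}\,dx$ at the boundary. At a sharper cusp (cf.\ Example~\ref{exl:CounterPointBody}) this integral remains finite, $F$ loses coercivity, and the infimum escapes $\inn K^\circ$, showing that the hypothesis cannot be weakened.
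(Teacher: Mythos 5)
Your proof is correct and follows the same overall variational strategy as the paper --- realize the desired $y$ as the unique critical point of a volume functional on $\inn K^\circ$, with coercivity supplied by the cusp-free hypothesis and uniqueness by strict convexity --- but your convexity argument is genuinely simpler. The paper works upstairs with the cone $C$ and proves that $y\mapsto \frac{1}{d+1}\log\vol(C_y)$ is strictly convex, which requires the Laplace-type representation $\vol(C_y)=\frac{1}{(d+1)!}\int_C e^{-\langle x,y\rangle}\,dx$ and the Cauchy--Schwarz inequality (Lemma \ref{lem:Momenta} and Proposition \ref{prp:ConvexBody}). You instead observe that $\vol(\phi_y(K))=\int_K(1+\langle x,y\rangle)^{-(d+1)}\,dx$ is an integral of functions each of which is convex in $y$ on the relevant domain, so its Hessian is manifestly positive semidefinite, and positive definite because $K$ is $d$-dimensional. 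For the present theorem this cruder convexity is all one needs; the paper's stronger logarithmic convexity pays off later (Section \ref{sec:TwoBodies}), where differences $F_1-F_2$ of such functionals are analyzed and the normalized second-derivative formula \eqref{eqn:SecDerBody} becomes essential. One sentence of your coercivity step is wrong as stated, although the conclusion survives: the slices $T\cap\{x_1=t\}$ of a simplex with a vertex at $x_0$ are \emph{not} of $(d-1)$-volume bounded away from zero near the vertex --- generically they shrink like $t^{d-1}$. The integral nevertheless diverges, because $t^{d-1}\cdot t^{-(d+1)}=t^{-2}$ is non-integrable at $0$, and the slices can only be larger if further vertices of $T$ lie on the support hyperplane; this is exactly the computation Lemma \ref{lem:InfinityBody} performs in product form. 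Replace ``bounded away from zero'' by a lower bound $c\,t^{d-1}$ and your argument is complete.
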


\begin{exl}
For $d=1$ this means in particular that there is a unique chord of an angle $C$ that goes through a given point and has it as the midpoint. This is a popular elementary geometry problem. The endpoints of such a chord are found by intersecting $\partial C$ with its image under rotation by $180^\circ$ about the point. See Figure \ref{fig:PointInAngle}.

\begin{figure}[ht]
\begin{center}
\includegraphics{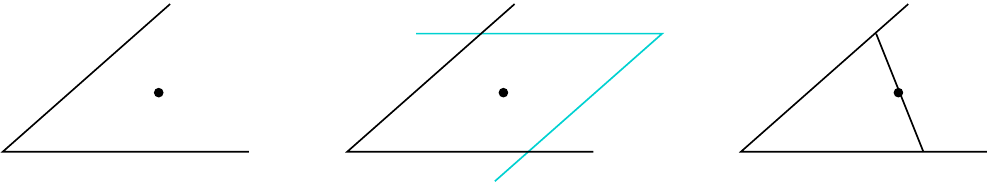}
\end{center}
\caption{Finding an angle chord bisected by a given point.}
\label{fig:PointInAngle}
\end{figure}
\end{exl}

Propositions \ref{prp:A} and \ref{prp:C} imply the following reformulation of Theorem~\ref{thm:OneVsBody} in the convex case.

\setcounter{thmC}{1}
\begin{thmC}
\label{thm:OneVsBodyC}
Let $L \subset \R^d$ be a convex $d$-dimensional body. Then there exists a unique point $y \in \inn L$ such that the polar dual of $L$ with respect to~$y$ has centroid at $y$.
\end{thmC}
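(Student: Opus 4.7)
The plan is to deduce Theorem~\ref{thm:OneVsBodyC} from Theorem~\ref{thm:OneVsBody} by running the problem through the polarity dictionary of Section~\ref{sec:Reform}, specifically Propositions~\ref{prp:A} and~\ref{prp:C}. First I would translate $L$ so that $0 \in \inn L$; the condition $\bc(L^\circ_y) = y$ is translation-covariant in the pair $(L,y)$, so nothing is lost. Then set $K \ce L^\circ$ (polar with respect to the origin). Since $L$ is convex with $0$ in its interior, $K$ is a convex, and in particular cusp-free, $d$-dimensional body with $0 \in \inn K$, and biduality gives $K^\circ = L$. This is the only place where the convexity hypothesis on $L$ really enters.

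Next, I would apply Theorem~\ref{thm:OneVsBody} to this $K$ with the interior point $q = 0$: there is an admissible projective transformation $\phi$ with $\bc(\phi(K)) = \phi(0) = 0$, unique modulo affinities. By Proposition~\ref{prp:A} the coset of $\phi$ has a unique representative of the elation form $\phi_y(x) = x/(1 + \langle x, y \rangle)$; the alternative form~\eqref{eqn:NotElation} is ruled out by admissibility, since its axis is a hyperplane through $0 \in \inn\conv K$. Admissibility of $\phi_y$ for $K$ is equivalent to $y \in \inn K^\circ = \inn L$, so the output of Theorem~\ref{thm:OneVsBody} is a distinguished $y \in \inn L$.

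Finally, since $\phi_y(0) = 0$, the centroid condition reads $\bc(\phi_y(K)) = 0$. Proposition~\ref{prp:C} gives $\phi_y(K) = L^\circ_y - y$, hence $\bc(\phi_y(K)) = \bc(L^\circ_y) - y$, and so $\bc(\phi_y(K)) = 0$ is exactly the desired equation $\bc(L^\circ_y) = y$. Uniqueness of $y$ follows from the uniqueness part of Theorem~\ref{thm:OneVsBody} together with the bijective parametrization of cosets by $y$ furnished by Proposition~\ref{prp:A}. I do not foresee a substantive obstacle: the whole argument is a short translation through two applications of polarity. The one bookkeeping point worth checking carefully is that ``$\phi_y$ admissible for $K$'' matches ``$y \in \inn L$'', which is precisely where the biduality $K^\circ = L$ is used.
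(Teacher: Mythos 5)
Your argument is correct and is exactly the paper's route: Theorem~\ref{thm:OneVsBodyC} is presented there as the reformulation of Theorem~\ref{thm:OneVsBody} obtained by combining Propositions~\ref{prp:A} and~\ref{prp:C}, which is precisely the translation you spell out. One cosmetic point: convexity of $L$ is used not only to make $K=L^\circ$ cusp-free but also inside Proposition~\ref{prp:C} itself, where the biduality $(\phi_y(K))^{\circ\circ}=\phi_y(K)$ is needed.
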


The point $y$ in Theorem \ref{thm:OneVsBodyC} is called the Santal\'o point of $L$. Its existence and uniqueness was proved in \cite{San49}, see also \cite[pp. 165--166]{Leicht98}.

\subsection{Criticality of the volume}
Let $C \subset \R^{d+1}$ be a pointed full-dimensional cone. For every hyperplane $H$ such that $C \cap H$ is compact and intersects all rays of $C$, denote the bounded component of $C \setminus H$ by $C_H$.

\begin{prp}
\label{prp:Lever}
A hyperplane section $H \cap C$ of a cone $C$ has centroid at $q$ if and only if $H$ is a critical point of the function
\begin{equation}
\label{eqn:VolFunc}
H \mapsto \vol(C_H)
\end{equation}
on the set of all hyperplanes through $q$.
\end{prp}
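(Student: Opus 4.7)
The plan is to compute the derivative of the volume functional along a rotation of $H$ about a $(d-1)$-dimensional axis through $q$ and recognize it as a first moment of $H \cap C$ about $q$. Criticality over all such rotations will then be equivalent to the vanishing of all first moments of the section, which is exactly the condition $\bc(H \cap C) = q$.

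I would first fix $H_0$ through $q$ with $H_0 \cap C$ bounded, and choose a unit normal $n_0$ to $H_0$ so that the apex of $C$ lies on the side $\{x : \langle x - q, n_0\rangle > 0\}$, whence $C_{H_0} = \{x \in C : \langle x - q, n_0\rangle \ge 0\}$. The $d$-dimensional manifold of hyperplanes through $q$ has tangent space $n_0^\perp$ at $H_0$, and every tangent direction is realized by the one-parameter family
\[
n_t = \cos t \, n_0 + \sin t \, v, \qquad H_t = q + n_t^\perp, \qquad v \in n_0^\perp.
\]
Writing $\vol(C_{H_t}) = \int_C \mathbf{1}_{\langle x - q, n_t\rangle \ge 0}\, dx$ and differentiating under the integral sign (either via the distributional identity $\partial_t \mathbf{1}_{f_t \ge 0} = \delta(f_t)\dot f_t$ combined with the coarea formula, or by directly estimating the wedge volume swept between $H_0$ and $H_t$ inside $C$), one obtains
\[
\frac{d}{dt}\vol(C_{H_t})\bigg|_{t=0} = \int_{H_0 \cap C} \langle x - q, v\rangle\, dA(x) = \left\langle \int_{H_0 \cap C} (x - q)\, dA(x),\, v\right\rangle,
\]
with the codimension-$2$ contribution from $H_0 \cap \partial C$ being zero.

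Since $x - q \in n_0^\perp$ for every $x \in H_0$, the right-hand side vanishes for every $v \in n_0^\perp$ if and only if $\int_{H_0 \cap C}(x - q)\, dA = 0$, which is exactly the centroid condition $\bc(H_0 \cap C) = q$. The main technical step is justifying the differentiation of an indicator-function integral; once formulated via the coarea formula the calculation is transparent, and the global orientation choice is immaterial for the critical-point condition. I would also note that restricting to hyperplanes through $q$ is essential: if parallel translations were allowed, one would pick up an extra scalar condition $\vol(H \cap C) = 0$, which never holds. The restriction removes exactly this spurious degree of freedom.
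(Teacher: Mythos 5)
Your proposal is correct and follows essentially the same route as the paper: both identify the first variation of $\vol(C_H)$ over hyperplanes through $q$ with the integral over $C\cap H$ of an arbitrary linear function vanishing at $q$ (your $\langle x-q,v\rangle$ is exactly the paper's function $f$ whose graph is the perturbed hyperplane), and conclude that criticality is the vanishing of the first moment, i.e.\ $\bc(H\cap C)=q$. Your version is slightly more explicit about the differentiation step (coarea/indicator argument) where the paper writes an ``$\approx$'', but there is no substantive difference.
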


\begin{figure}[ht]
\begin{center}
\begin{picture}(0,0)%
\includegraphics{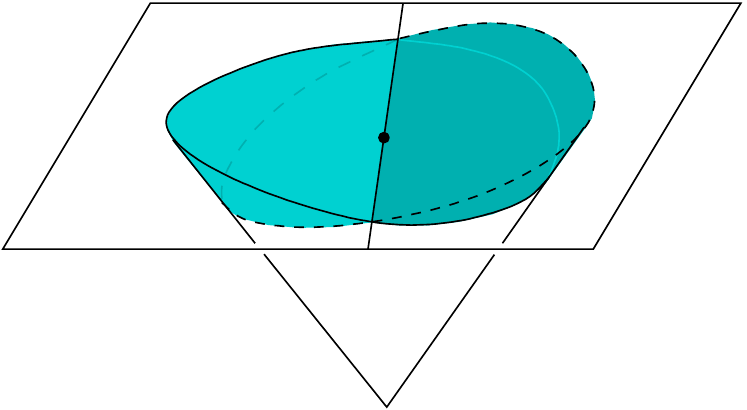}%
\end{picture}%
\setlength{\unitlength}{4144sp}%
\begingroup\makeatletter\ifx\SetFigFont\undefined%
\gdef\SetFigFont#1#2#3#4#5{%
  \reset@font\fontsize{#1}{#2pt}%
  \fontfamily{#3}\fontseries{#4}\fontshape{#5}%
  \selectfont}%
\fi\endgroup%
\begin{picture}(3399,1871)(-686,-1020)
\put(556,-722){\makebox(0,0)[lb]{\smash{{\SetFigFont{9}{10.8}{\rmdefault}{\mddefault}{\updefault}{\color[rgb]{0,0,0}$C_H$}%
}}}}
\put(757,-419){\makebox(0,0)[lb]{\smash{{\SetFigFont{9}{10.8}{\rmdefault}{\mddefault}{\updefault}{\color[rgb]{0,0,0}$H \cap H'$}%
}}}}
\put(-516,-243){\makebox(0,0)[lb]{\smash{{\SetFigFont{9}{10.8}{\rmdefault}{\mddefault}{\updefault}{\color[rgb]{0,0,0}$H$}%
}}}}
\end{picture}%
\end{center}
\caption{The volume difference between the differently shaded parts equals $\vol(C_H) - \vol(C_{H'})$. It vanishes in the first order if and only if $H \cap H'$ is an equilibrium axis for $C \cap H$.}
\label{fig:Lever}
\end{figure}

\begin{proof}
Consider two hyperplanes $H$ and $H'$ through $q$ close to each other. Then we have
\[
\vol(C_H) - \vol(C_{H'}) \approx \int\limits_{C \cap H} f(x)\, dx
\]
where $f \colon H \to \R$ is the linear function whose graph is $H'$. See Fig. \ref{fig:Lever}. Thus $H$ is critical for $\vol(C_H)$ if and only if all integrals over $C \cap H$ of linear functions vanishing at $q$ vanish.

On the other hand, vanishing of $\int_{C \cap H} f(x) \, dx$ for all linear functions $f$ with $f(q) = 0$ is equivalent to $q$ being the centroid of $C \cap H$. (Think of $f$ as the gravity torque with respect to the axis $\ker f$.)
\end{proof}


\begin{rem}
In 1931, Tricomi \cite{Tri31} and Guido Ascoli \cite{Asc31} showed that for every point inside a convex body there exists a hyperplane section that has this point as a centroid. Tricomi dealt only with dimension $3$ using the ``hairy ball theorem''. Ascoli used a variational approach based on Proposition \ref{prp:Lever}. They also characterized those non-convex bodies, for which the centroid of a section depends continuously on the hyperplane, making both approaches applicable. For more details see \cite[\S 2, Section 8]{BF87} that also deals with a beautiful related object, Dupin's ``floating body''.

Filliman \cite{Fil92} studied critical sections of polytopes and gave their characterization in the case of a simplex. 
\end{rem}

Our plan now is to show that for cones with cusp-free sections the function \eqref{eqn:VolFunc} is coercive, which implies the existence of a critical point, and then to prove some sort of convexity of \eqref{eqn:VolFunc} to ensure the uniqueness of the critical point.

\subsection{Logarithmic convexity of the volume and a proof of Theorem \ref{thm:OneVsBodyB}}
\label{sec:VolVar}
For a non-zero vector $y \in \R^{d+1}$ denote
\[
H_y \ce \{x \in \R^{d+1} \mid \langle x, y \rangle = 1\}
\]
The section $H_y \cap C$ is compact and intersects each ray of $C$ if and only if $y \in \inn C^\ast$, where
\[
C^\ast \ce \{y \in \R^{d+1} \mid \langle x, y \rangle \ge 0 \; \forall x \in C\}
\]
Note that if $C$ is $(d+1)$-dimensional, closed and pointed, then $C^\ast$ is also $(d+1)$-dimensional, closed and pointed.

For every $y \in \inn C^\ast$ denote
\[
C_y \ce \{x \in C \mid \langle x, y \rangle \le 1\}
\]
That is, $C_y$ is the bounded part of $C$ cut off by the hyperplane $H_y$.

%

Theorem \ref{thm:OneVsBodyB} is proved by using variational properties of the function
\begin{equation}
\label{eqn:FBody}
F \colon \inn C^\ast \to \R, \quad F(y) = \log \sqrt[d+1]{\vol(C_y)}
\end{equation}
The following arguments are a slight modification of \cite{Kloe13} and \cite{FK94}.

\begin{lem}
\label{lem:Momenta}
We have
\begin{gather*}
\int_C e^{-\langle x, y \rangle}\, dx = \frac{d!}{\|y\|} \vol_d(C \cap H_y) = (d+1)! \vol_{d+1}(C_y)\\
\int_C x e^{-\langle x, y \rangle}\, dx = \frac{(d+1)!}{\|y\|} \int_{C \cap H_y} x\, dx = (d+1)\cdot(d+1)! \vol_{d+1}(C_y) \bc(C\cap H_y)\\
\int_C \langle x, u \rangle^2 e^{-\langle x, y \rangle}\, dx = \frac{(d+2)!}{\|y\|} \int_{C\cap H_y} \langle x, u \rangle^2\, dx
\end{gather*}
\end{lem}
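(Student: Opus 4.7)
The plan is to compute every left-hand side by slicing the cone into scaled copies of the section $C \cap H_y$. For a point $y \in \inn C^*$ and each $t \ge 0$ the level set $\{x \in C : \langle x, y \rangle = t\}$ is exactly the dilate $t \cdot (C \cap H_y)$, so by the coarea formula applied to the function $x \mapsto \langle x, y \rangle$ (whose gradient has norm $\|y\|$) one has, for any integrable $g$ on $C$,
\[
\int_C g(x)\, dx \;=\; \frac{1}{\|y\|} \int_0^\infty \int_{C \cap H_y} g(t \tilde x) \, t^d \, d\sigma(\tilde x)\, dt,
\]
where $d\sigma$ denotes $d$-dimensional Hausdorff measure on $H_y$ and the factor $t^d$ is the Jacobian of the dilation $\tilde x \mapsto t\tilde x$ on a $d$-dimensional hyperplane.

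Now the point of the particular weight $e^{-\langle x, y \rangle}$ is that along each ray $\langle x, y \rangle = t$, it reduces to $e^{-t}$, and one is left with a Gamma integral in~$t$. First I would take $g(x) = e^{-\langle x, y \rangle}$, so that the double integral factors as $\frac{1}{\|y\|} \vol_d(C \cap H_y) \int_0^\infty t^d e^{-t}\, dt = \frac{d!}{\|y\|} \vol_d(C \cap H_y)$; the identification of this with $(d+1)! \vol_{d+1}(C_y)$ then follows from the pyramid-volume formula $\vol_{d+1}(C_y) = \frac{1}{(d+1)\|y\|} \vol_d(C \cap H_y)$, obtained by integrating $t^d$ from $0$ to $1$. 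Next I would take $g(x) = x \, e^{-\langle x, y \rangle}$; the factorization separates $\int_0^\infty t^{d+1} e^{-t}\, dt = (d+1)!$ from $\int_{C \cap H_y} \tilde x\, d\sigma(\tilde x) = \vol_d(C \cap H_y) \cdot \bc(C \cap H_y)$, and applying the pyramid-volume formula once more yields the stated expression in terms of $\vol_{d+1}(C_y)$ and $\bc(C \cap H_y)$. Finally, taking $g(x) = \langle x, u \rangle^2 e^{-\langle x, y \rangle}$ produces an extra factor $t^2$ inside the $t$-integral, so the Gamma integral becomes $\int_0^\infty t^{d+2} e^{-t}\, dt = (d+2)!$, which is exactly the third identity.

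There is no real obstacle here: once the coarea slicing and the scaling relation $\{\langle \cdot, y \rangle = t\} \cap C = t(C \cap H_y)$ are in place, the whole lemma is a bookkeeping exercise for the Gamma function. The only point that requires a brief justification is the convergence of the integrals over the unbounded cone, which is immediate from $y \in \inn C^*$ (so $\langle x, y \rangle \ge c \|x\|$ for some $c > 0$ on $C$, making the exponential weight integrable against any polynomial).
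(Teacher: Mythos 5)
Your proposal is correct and follows essentially the same route as the paper: both decompose $C$ into the parallel slices $C\cap tH_y = t(C\cap H_y)$ with the measure factorization $\frac{1}{\|y\|}t^d\,d\sigma\,dt$ (which the paper writes as $\mu_{d+1}=\|y\|^{-1}\,dt\,\mu_d$ rather than invoking the coarea formula by name) and then reduce each identity to a Gamma integral plus the pyramid-volume relation $\vol_{d+1}(C_y)=\frac{\vol_d(C\cap H_y)}{(d+1)\|y\|}$. No gaps.
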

\begin{proof}
Represent $C$ as the union of parallel slices $C = \bigcup\limits_{t=0}^{+\infty} C\cap tH_y$. The distance between the hyperplanes $t_1 H_y$ and $t_2 H_y$ equals $\|y\|^{-1} |t_1 - t_2|$, therefore $\mu_{d+1} = \|y\|^{-1}\, dt\, \mu_d$, where $\mu_d$ is the $d$-dimensional Lebesgue measure on the hyperplanes orthogonal to $y$. Thus we have
\begin{multline*}
\int\limits_C e^{-\langle x, y\rangle}\, dx = \|y\|^{-1} \int\limits_0^{+\infty} \int\limits_{C \cap tH_y} e^{-\langle x, y \rangle}\, dx\, dt = \|y\|^{-1} \int\limits_0^{+\infty} \int\limits_{C \cap tH_y} e^{-t}\, dx\, dt\\
= \|y\|^{-1} \int\limits_0^{+\infty} \int\limits_{C \cap H_y} e^{-t} t^d\, dx\, dt = \|y\|^{-1} \vol_d(C \cap H_y) \int\limits_0^{+\infty} e^{-t} t^d\, dt\\
= d! \|y\|^{-1} \vol_d(C \cap H_y) = (d+1)! \vol_{d+1}(C_y)
\end{multline*}
because $C_y$ is a pyramid over $C \cap H_y$ with the altitude $\|y\|^{-1}$.

The second and the third integrals are computed similarly. Take into account that $\bc(C\cap H_y) = \frac{\int_{C\cap H_y} x\, dx}{\vol_d(C \cap H_y)}$.
\end{proof}

In particular, the function \eqref{eqn:FBody} equals
\[
F(y) = \frac{1}{d+1} \log \int_C e^{-\langle x, y \rangle}\, dx + \const
\]

\begin{lem}
\label{lem:GradF}
The gradient of $F$ is the centroid of the section:
\[
\grad F(y) = \gamma(C\cap H_y)
\]
\end{lem}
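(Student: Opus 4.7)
The plan is to work directly from the representation
\[
F(y) = \tfrac{1}{d+1}\log \int_C e^{-\langle x, y\rangle}\, dx + \const
\]
derived immediately above the lemma, differentiate under the integral sign, and then read off the centroid from the first two moment formulas in Lemma~\ref{lem:Momenta}.

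The only step requiring genuine care is the justification of differentiating under the integral. Fix $y_0 \in \inn C^\ast$; since $C$ is a pointed cone and $y_0$ lies in the interior of its dual, there is a constant $c>0$ with $\langle x, y_0\rangle \ge c\|x\|$ for all $x \in C$. Choosing a closed ball $\overline{B_\epsilon(y_0)} \subset \inn C^\ast$ with $\epsilon < c/2$, one obtains $\langle x, y\rangle \ge (c-\epsilon)\|x\|$ uniformly for $y \in \overline{B_\epsilon(y_0)}$ and $x \in C$. Both the integrand $e^{-\langle x, y\rangle}$ and its gradient in $y$, which is $-x\,e^{-\langle x, y\rangle}$ (so of norm $\le \|x\|e^{-(c-\epsilon)\|x\|}$), are then dominated by a single integrable function on $C$; integrability follows from the first identity of Lemma~\ref{lem:Momenta} applied to a slightly more interior point $y' \in \inn C^\ast$. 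Dominated convergence then legitimizes exchanging $\grad$ and $\int$.

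With this in hand, the chain rule for $\log$ combined with $\grad_y e^{-\langle x, y\rangle} = -x\,e^{-\langle x, y\rangle}$ yields
\[
\grad F(y) \;=\; \frac{1}{d+1}\cdot\frac{-\int_C x\,e^{-\langle x, y\rangle}\, dx}{\int_C e^{-\langle x, y\rangle}\, dx}.
\]
Substituting the first two identities of Lemma~\ref{lem:Momenta}, the $(d+1)!$ factors cancel between numerator and denominator, the factor $d+1$ arising from the first-moment identity cancels the prefactor $\frac{1}{d+1}$, the volume $\vol_{d+1}(C_y)$ cancels, and what remains is precisely the centroid $\bc(C \cap H_y)$ (up to an overall sign fixed by the conventions for orienting $H_y$ and $C_y$). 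The main obstacle is therefore only the domination bound for differentiation under the integral; everything after that is line-by-line bookkeeping.
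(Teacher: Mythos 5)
Your proof is correct and takes essentially the same route as the paper: differentiate $\tfrac{1}{d+1}\log\int_C e^{-\langle x,y\rangle}\,dx$ under the integral sign and substitute the first two identities of Lemma~\ref{lem:Momenta}; your domination bound merely supplies a justification the paper leaves implicit. The sign you flag is real rather than conventional --- since $D_u e^{-\langle x,y\rangle}=-\langle x,u\rangle e^{-\langle x,y\rangle}$, the honest computation gives $\grad F(y)=-\bc(C\cap H_y)$, and the paper's own proof drops this minus sign --- but the discrepancy is harmless because everything downstream uses only the vanishing of the gradient and the (independently computed) second derivative.
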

\begin{proof}
Let us compute the derivative of $F$ in the direction $u \in \R^{d+1}$:
\begin{multline*}
D_uF(y) = \frac{D_u (\int_C e^{-\langle x, y \rangle}\, dx)}{(d+1)\int_C e^{-\langle x, y \rangle}\, dx}\\
= \frac{\int_C \langle x, u \rangle e^{-\langle x, y \rangle}\, dx}{(d+1)\int_C e^{-\langle x, y \rangle}\, dx} = \left\langle u, \frac{\int_C x e^{-\langle x, y \rangle}\, dx}{(d+1)\int_C e^{-\langle x, y \rangle}\, dx} \right\rangle
\end{multline*}
Now the result follows from Lemma \ref{lem:Momenta}.
\end{proof}

\begin{lem}
\label{lem:InfinityBody}
If $K$ is cusp-free, then the value $F(y)$ tends to $+\infty$ as $y$ tends to a point in $\partial C^* \setminus \{0\}$.
\end{lem}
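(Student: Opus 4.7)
The plan is to bound $\vol(C_y)$ from below by the volume of a simplicial subcone of $C$ whose apex-generator is annihilated by $y_0$. The cusp-free hypothesis will precisely supply such a subcone.

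First I would unpack the condition $y_0 \in \partial C^* \setminus \{0\}$ geometrically. Writing $y_0 = ((y_0)_0, \tilde y_0)$, the affine function $f(x) \ce (y_0)_0 + \langle x, \tilde y_0\rangle$ is non-negative on $K$ (because $y_0 \in C^*$) and attains the value $0$ somewhere on $K$ (because $y_0$ lies on $\partial C^*$ and $K$ is compact). Pick such a minimizer $x^* \in K$. The hyperplane $\{f = 0\}$ then supports $\conv K$ at $x^*$, so $x^* \in K \cap \partial \conv K$ and $\langle \hat{x^*}, y_0\rangle = 0$.

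Next I would invoke the cusp-free property at $x^*$ to obtain a $d$-simplex $\Delta = \conv\{x^*, v_1, \ldots, v_d\} \subset K$. The affine independence of $x^*, v_1, \ldots, v_d$ in $\R^d$ lifts to the linear independence of $\hat{x^*}, \hat{v_1}, \ldots, \hat{v_d}$ in $\R^{d+1}$, so $\cone(\hat\Delta) \subset C$ is a genuine $(d{+}1)$-dimensional simplicial cone. For $y \in \inn C^*$, setting $\alpha_0(y) \ce \langle \hat{x^*}, y\rangle$ and $\alpha_i(y) \ce \langle \hat{v_i}, y\rangle$, the bounded piece $(\cone\hat\Delta)_y$ is the $(d{+}1)$-simplex with vertices $0, \hat{x^*}/\alpha_0(y), \hat{v_1}/\alpha_1(y), \ldots, \hat{v_d}/\alpha_d(y)$, and an elementary determinant computation gives
\[
\vol\bigl((\cone\hat\Delta)_y\bigr) = \frac{|\det(\hat{x^*}, \hat{v_1}, \ldots, \hat{v_d})|}{(d+1)!\,\alpha_0(y)\alpha_1(y)\cdots\alpha_d(y)},
\]
with a fixed nonzero numerator.

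Finally, as $y \to y_0$ within $\inn C^*$, the factor $\alpha_0(y)$ tends to $\langle \hat{x^*}, y_0\rangle = 0$ while the remaining $\alpha_i(y)$ are uniformly bounded above on a neighborhood of $y_0$. Hence the denominator tends to $0$, the volume of the subcone blows up, and the inclusion $(\cone\hat\Delta)_y \subset C_y$ transports this to $\vol(C_y) \to +\infty$, giving $F(y) \to +\infty$. The one point requiring care—and arguably the main potential obstacle—is that cusp-freeness does \emph{not} guarantee that $v_1, \ldots, v_d$ avoid the support hyperplane $\{f = 0\}$, so several of $\alpha_1(y_0), \ldots, \alpha_d(y_0)$ may vanish simultaneously with $\alpha_0(y_0)$; fortunately the blow-up argument needs only an \emph{upper} bound on the $\alpha_i(y)$, which is automatic from continuity, so this causes no trouble.
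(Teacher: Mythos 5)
Your proof is correct and follows essentially the same route as the paper: cusp-freeness supplies a $d$-simplex at the boundary point $x^*$ annihilated by $y_0$, the corresponding simplicial subcone gives a lower bound on $\vol(C_y)$ whose denominator $\prod_i\alpha_i(y)$ tends to $0$, and hence $F(y)\to+\infty$. The only (cosmetic) difference is that you compute the truncated simplicial cone's volume directly as a simplex determinant, whereas the paper obtains the same bound via the exponential integral $\int_\Delta e^{-\langle x,y\rangle}\,dx$; your explicit remark that only an upper bound on the remaining $\alpha_i(y)$ is needed handles the degenerate case (several generators in the support hyperplane) at least as carefully as the paper does.
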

\begin{proof}
Let $y^0 \in \partial C^* \setminus \{0\}$.
Then there exists $x^0 \in C$ such that $\langle x^0, y^0 \rangle = 0$ and $x_0 \ne 0$. Clearly, $x^0 \in \partial C$. Thus, by assumption of Theorem \ref{thm:OneVsBody} there are vectors $x^1, \ldots, x^d \in \R^{d+1}$ such that their positive hull $\Delta = \{\sum_{i=0}^d \lambda_i x^i \mid \lambda_i \ge 0\}$ is contained in $C$. Then we have
\[
e^{(d+1)F(y)} \ge \int_\Delta e^{-\langle x, y \rangle}\, dx = \const \cdot \prod_{i=0}^d \int_0^\infty e^{-\lambda_i \langle x^i, y \rangle}\, d\lambda_i = \frac{\const}{\langle x^0, y \rangle \cdots \langle x^d, y \rangle}
\]
for some positive constant. Hence
\[
F(y) \ge -\frac1{d+1} {\sum_{i=0}^d \log \langle x^i, y \rangle} + \const
\]
As $y$ tends to $y^0$, the scalar product $\langle x^0, y \rangle$ tends to $0$ while other scalar products remain bounded below by a positive constant (some of them may also tend to $+\infty$). Hence $F(y) \to +\infty$.

%
\end{proof}

\begin{prp}
\label{prp:ConvexBody}
The function $F$ is strictly convex.
\end{prp}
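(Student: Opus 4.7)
The plan is to use the integral representation
\[
F(y) = \frac{1}{d+1}\log \int_C e^{-\langle x,y \rangle}\,dx + \const
\]
already recorded after Lemma \ref{lem:Momenta}, which presents $(d+1)F$, up to an additive constant, as the logarithm of the Laplace transform of $\mathbf{1}_C$. For cumulant generating functions of this kind, strict convexity on the interior of the domain of definition is classical: the Hessian is a variance, hence nonnegative by Cauchy--Schwarz and strictly positive whenever the measure is not supported on an affine hyperplane.

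Concretely, let $Z(y) \ce \int_C e^{-\langle x,y \rangle}\,dx$ for $y \in \inn C^*$. Differentiating twice in direction $u$ gives
\[
D^2_{u,u} \log Z(y) = \frac{1}{Z(y)} \int_C \langle x,u \rangle^2 e^{-\langle x,y \rangle}\,dx - \left(\frac{1}{Z(y)}\int_C \langle x,u \rangle\, e^{-\langle x,y \rangle}\,dx\right)^{\!2}.
\]
Viewing $\mu_y \ce Z(y)^{-1}e^{-\langle x,y \rangle}\,dx$ as a probability measure on $C$, the right-hand side is $\mathrm{Var}_{\mu_y}\langle \cdot, u \rangle \ge 0$, with equality if and only if $x \mapsto \langle x,u \rangle$ is $\mu_y$-almost everywhere constant on $C$. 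Since the density of $\mu_y$ is strictly positive on all of $C$ and $C$ is $(d+1)$-dimensional, this constancy forces $u = 0$. Hence the Hessian of $\log Z$, and so of $F = \tfrac{1}{d+1}\log Z + \const$, is positive definite on $\inn C^*$.

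There is no serious obstacle. The only piece that requires a moment's thought is legitimacy of differentiation under the integral sign: for $y$ in a small neighborhood of any fixed point of $\inn C^*$ one has a uniform lower bound $\langle x, y' \rangle \ge c\|x\|$ on $C$ (because $C$ is pointed and $y \in \inn C^*$), so the integrand and its polynomial weightings are dominated by an integrable exponential, and dominated convergence applies. Observe also that this argument does not use the cusp-free hypothesis on $K$; that assumption enters only in the coercivity statement (Lemma \ref{lem:InfinityBody}), whereas strict convexity holds for every full-dimensional pointed cone.
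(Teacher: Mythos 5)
Your proof is correct and follows essentially the same route as the paper's: both compute the Hessian of $\frac{1}{d+1}\log\int_C e^{-\langle x,y\rangle}\,dx$ and identify it as a variance of $\langle\cdot,u\rangle$ under the exponentially weighted measure on $C$, which is strictly positive for $u\ne 0$ because $C$ is full-dimensional. The only difference is cosmetic: the paper first converts this variance into moments of the section $C\cap H_y$ via Lemma \ref{lem:Momenta} (a form it reuses later in Lemma \ref{lem:D2F}) before applying the Cauchy--Schwarz inequality.
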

\begin{proof}
We have
\[
D^2_{u,u}F(y) = \frac{\int\limits_{C\cap H_y} \langle x, u \rangle^2 e^{-\langle x, y \rangle}\, dx \cdot \int\limits_{C\cap H_y} e^{-\langle x, y \rangle}\, x - \left( \int\limits_{C\cap H_y} \langle x, u \rangle e^{-\langle x, y \rangle}\, dx \right)^2}{(d+1)\left( \int\limits_{C\cap H_y} e^{-\langle x, y \rangle}\, dx \right)^2}
\]
Using Lemma \ref{lem:Momenta}, we get
\begin{equation}
\label{eqn:SecDerBody}
D^2_{u,u}F(y) = (d+2) \frac{\int_{C\cap H_y} \langle x, u \rangle^2\, dx}{\vol_d(C \cap H_y)} - (d+1) \left( \frac{\int_{C\cap H_y} \langle x, u \rangle\, dx}{\vol_d(C\cap H_y)} \right)^2
\end{equation}
Due to the functional arithmetic-quadratic mean inequality
\[
\frac{\int_A f^2\, dx}{\vol(A)} \ge \left(\frac{\int_A f\, dx}{\vol(A)}\right)^2
\]
(which is the $L^2$ Cauchy-Schwarz inequality for functions $f$ and $1$) we have
\[
D^2_{u,u}F(y) \ge \frac{\int_{C \cap H_y} \langle x, u \rangle^2\, dx}{\vol_d(C \cap H_y)} > 0
\]
\end{proof}

\begin{proof}[Proof of Theorem \ref{thm:OneVsBodyB}]
The hyperplane $H_y$ passes through the point $q$ if and only if the hyperplane $H_q$ passes through $y$. The section $H_y \cap C$ is bounded if and only if $y \in \inn C^*$. Thus the hyperplane sections of $C$ coming into question are
\[
\{C_y \mid y \in \inn(C^* \cap H_q)\}
\]
Restrict the function $F$ defined in \eqref{eqn:FBody} to $C^*\cap H_q$. By Lemma \ref{lem:GradF} we have
\[
\grad F|_{C^* \cap H_q}(y) = \bc(C \cap H_y) - q
\]
(This is the projection of $\grad F$ to $H_q$; one may also evoke Lagrange multipliers.)
Thus $q$ is the centroid of $C\cap H_y$ if and only if $y$ is a critical point of $F|_{C^* \cap H_q}$.

Lemma \ref{lem:InfinityBody} implies that $F$ attains a minimum on $C^* \cap H_q$, which shows the existence part of Theorem \ref{thm:OneVsBodyB}. The uniqueness follows from Proposition \ref{prp:ConvexBody}, similarly to the proof of Theorem \ref{thm:OneVsManyA}.
\end{proof}

\begin{exl}
\label{exl:CounterPointBody}
Let $K = \{(x,y) \in \R^2 \mid -1 \le x \le 1, -(1-x)^3 \le y \le (1-x)^3\}$ and $q = (0,0)$. We claim that none of the maps
\[
\phi_{a,b} \colon (x,y) \mapsto \left( \frac{x}{1+ax+by}, \frac{y}{1+ax+by} \right)
\]
has the property $\bc(\phi_{a,b}(K)) = (0,0) = \phi_{a,b}(q)$. We have $\phi_{a,b} = \phi_{0,b} \circ \phi_{a,0}$. The set $\phi_{a,0}(K)$ is symmetric with respect to the $x$-axis, and it can be shown that for $b \ne 0$ the image under $\phi_{0,b}$ of an $x$-symmetric set has its centroid outside the $x$-axis. It follows that the only candidates for $\phi$ are the maps
\[
\phi_{a,0} \colon (x,y) \mapsto \left( \frac{x}{1+ax}, \frac{y}{1+ax} \right)
\]
For $\phi_{a,0}$ to be admissible, we have to assume $|a| > 1$.

\begin{figure}[ht]
\begin{center}
\begin{picture}(0,0)%
\includegraphics{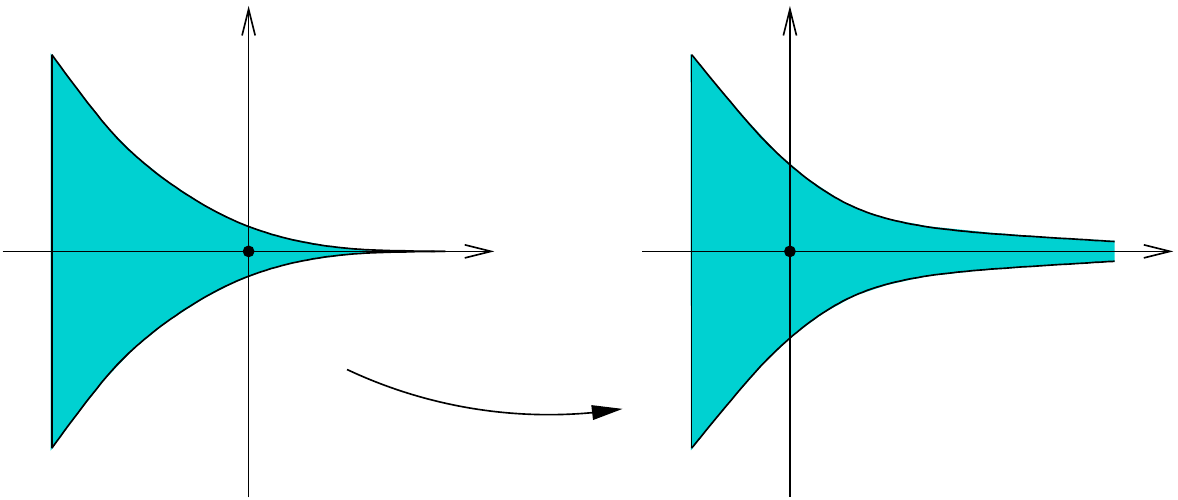}%
\end{picture}%
\setlength{\unitlength}{4144sp}%
\begingroup\makeatletter\ifx\SetFigFont\undefined%
\gdef\SetFigFont#1#2#3#4#5{%
  \reset@font\fontsize{#1}{#2pt}%
  \fontfamily{#3}\fontseries{#4}\fontshape{#5}%
  \selectfont}%
\fi\endgroup%
\begin{picture}(5379,2274)(-1136,-298)
\put(2836,1109){\makebox(0,0)[lb]{\smash{{\SetFigFont{9}{10.8}{\rmdefault}{\mddefault}{\updefault}{\color[rgb]{0,0,0}$y = \frac1{(1+x)^2}$}%
}}}}
\put(136,974){\makebox(0,0)[lb]{\smash{{\SetFigFont{9}{10.8}{\rmdefault}{\mddefault}{\updefault}{\color[rgb]{0,0,0}$y=(1-x)^3$}%
}}}}
\put(496,-151){\makebox(0,0)[lb]{\smash{{\SetFigFont{9}{10.8}{\rmdefault}{\mddefault}{\updefault}{\color[rgb]{0,0,0}$(x,y) \mapsto (\frac{x}{1-x}, \frac{y}{1-x})$}%
}}}}
\end{picture}%
\end{center}
\caption{The point that cannot become the centroid of a shaded figure under a projective transformation; even when the cusp is sent to infinity, the centroid lies to the left from the origin.}
\label{fig:CounterPointBody}
\end{figure}

A direct computation shows that the centroid of $\phi_{a,0}(K)$ always has a negative $x$-coordinate. In particular, in the limit case $a = -1$ we have
\[
\phi_{-1,0}(K) = \left\{(x,y) \left| x \ge -\frac12, - \frac1{(1+x)^2} \le y \le \frac1{(1+x)^2}\right.\right\}
\]
see Figure \ref{fig:CounterPointBody}, and
\[
\int_{-\frac12}^{+\infty} \frac{x}{(1+x)^2}\, dx = \log 2 - 2 < 0
\]
\end{exl}

\begin{rem}
Denote by $L = K^\circ$ the polar dual of $K$.
Propositions \ref{prp:C} and \ref{prp:B} imply
\[
\vol(L^\circ_y) = \vol(\phi_y(K)) = \|y\|^{-1} \vol(C \cap H_y) = (d+1) \vol_{d+1}(C_y)
\]
Therefore finding the minimum of $\vol_{d+1}(C_y)$ is equivalent to finding the minimum over all $y$ of the volume of the polar dual of $L$ with respect to $y \in \inn L$. This is the second characterization of the Santal\'o point of a convex body $L$, the first having been given in Theorem \ref{thm:OneVsBodyC}.

The maximum of the product $\vol(L) \vol(L^\circ)$ over all origin-symmetric convex bodies is achieved when $L$ is an ellipsoid. This is the
\emph{Blaschke-Santal\'o} inequality \cite{Bla17, San49, SR81}.
The minimum of $\vol(L) \vol(L^\circ)$ is not known, but is conjectured to be achieved when $L$ is a cube or cross-polytope or, more generally, Hanner polytopes (\emph{Mahler conjecture}).
\end{rem}

\section{Several points vs. a body}
\label{sec:ManyVsBody}
\subsection{Existence and non-uniqueness in the general case}
Here we prove Theorem \ref{thm:ManyVsBody}.

By combining the functionals \eqref{eqn:F1} and \eqref{eqn:FBody} we see that the classes of projective transformations satisfying \eqref{eqn:PhiBodyPoints} are in a $1$-to-$1$ correspondence with the critical points of the function
\begin{multline*}
F \colon \inn C^\ast \to \R, \quad F(y) = \frac1{d+1} \log\vol(C^-_y) + \frac1n \sum_{i=1}^n \log \langle p_i, y \rangle\\
= \frac1{d+1} \log\int_C e^{-\langle x, y \rangle}\, dx + \frac1n \sum_{i=1}^n \log \langle p_i, y \rangle + \const
\end{multline*}

If $p_i \in \inn\conv K$ for all $i$, then, for a cusp-free $K$, the integral tends to $+\infty$ as $y$ tends to $\partial C \setminus \{0\}$, while the sum remains bounded. This implies the existence if all $p_i$ lie in the interior of $\conv K$. If some of them lie on the boundary, then we need a more delicate argument.

\begin{lem}
\label{lem:InfinityBodySev}
If every support hyperplane of $K$ contains less than $\frac{n}{d+1}$ of the points $p_1, \ldots, p_n$, then the function $F$ tends to $+\infty$ as $y$ tends to a point in $\partial C^\ast \setminus\{0\}$.
\end{lem}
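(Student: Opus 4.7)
Fix $y^0 \in \partial C^{\ast}\setminus\{0\}$; this corresponds to an affine hyperplane $H\subset\R^d$ supporting $\conv K$. After reindexing, let $p_1,\ldots,p_k$ be the $p_i$ on $H$ (equivalently $\langle p_i,y^0\rangle = 0$ in the homogeneous $\R^{d+1}$-notation used in the definition of $F$) and $p_{k+1},\ldots,p_n$ the ones strictly off $H$; by hypothesis $k<n/(d+1)$. As $y\to y^0$ inside $\inn C^\ast$, the $n-k$ products $\langle p_i,y\rangle$ with $i>k$ stay bounded away from $0$ and $\infty$, so those log-terms contribute only $O(1)$. The task reduces to balancing $\tfrac{1}{d+1}\log\vol(C_y)\to+\infty$ (which holds by Lemma~\ref{lem:InfinityBody}) against $\tfrac{1}{n}\sum_{i\le k}\log\langle p_i,y\rangle\to-\infty$, and the core step is the inequality
\[
-\log\langle p_i,y\rangle \;\le\; \log\vol(C_y) + \const_i,\qquad i=1,\ldots,k,
\]
with constants $\const_i$ independent of $y$.

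To prove this inequality I use that $H$ supports $\conv K$, so $H\cap\conv K=\conv(K\cap H)$; hence each $p_i$ with $i\le k$ has a finite convex combination $p_i=\sum_j\mu_{ij}q_{ij}$ with $q_{ij}\in K\cap H$ and $\mu_{ij}>0$. I fix one index $j_0(i)$ and set $q_i\ce q_{ij_0(i)}$. Cusp-freeness of $K$ at $q_i\in K\cap\partial\conv K$ produces a $d$-simplex $S\subset K$ with vertex $q_i$; since $S$ is $d$-dimensional while $S\cap H$ is at most $(d-1)$-dimensional, I refine $S$ to a $d$-simplex with the same vertex $q_i$ but with remaining vertices $t_1,\ldots,t_d$ chosen in $K\setminus H$. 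Lifting gives a simplicial sub-cone $\Delta_i\subset C$ whose only vertex ray on the face of $C$ exposed by $y^0$ is the ray through $q_i$, and repeating the integral computation from the proof of Lemma~\ref{lem:InfinityBody} yields
\[
\vol(C_y)\;\ge\;\frac{c_i}{\langle q_i,y\rangle \prod_j\langle t_j,y\rangle}\;\ge\;\frac{c_i'}{\langle q_i,y\rangle}
\]
for $y$ in a sufficiently small neighborhood of $y^0$, since $\langle t_j,y\rangle\to\langle t_j,y^0\rangle>0$. Combining with $\langle p_i,y\rangle\ge\mu_{ij_0(i)}\langle q_i,y\rangle$ gives the desired bound.

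Summing the $k$ inequalities gives $\sum_{i\le k}\log\langle p_i,y\rangle\ge -k\log\vol(C_y)-O(1)$; adding the bounded contribution from $i>k$ and dividing by $n$ yields $\tfrac{1}{n}\sum_{i=1}^n\log\langle p_i,y\rangle\ge -\tfrac{k}{n}\log\vol(C_y)+O(1)$ and hence
\[
F(y)\;\ge\;\Bigl(\tfrac{1}{d+1}-\tfrac{k}{n}\Bigr)\log\vol(C_y) + O(1).
\]
Since $k(d+1)<n$ the coefficient is strictly positive and, by Lemma~\ref{lem:InfinityBody} (applicable because $K$ is cusp-free), $\log\vol(C_y)\to+\infty$, so $F(y)\to+\infty$. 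The main obstacle is the middle step: the $p_i$ are only assumed to lie in $\conv K$, not in $K$, so cusp-freeness cannot be applied at $p_i$ directly; the detour through $\conv(K\cap H)$ and the choice, fixed once and for all, of a single convex representation of each $p_i$ with strictly positive weight $\mu_{ij_0(i)}$ is what makes the additive constant $\const_i$ uniform in $y$.
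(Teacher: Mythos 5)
Your proof is correct and follows essentially the same route as the paper: both decompose each $p_i$ lying on the support hyperplane determined by $y^0$ as a convex combination of points of $K$ on that hyperplane, invoke cusp-freeness there to produce a simplicial subcone giving a lower bound on $\vol(C_y)$ in terms of a single vanishing factor, and then balance the resulting logarithmic singularities using $k<n/(d+1)$. The only difference is organizational: the paper keeps all terms of the convex combination and spreads $\log\langle p_i,y\rangle$ over them via Jensen's inequality before comparing coefficients of the individual logarithms, whereas you retain a single representative $q_i$ (dropping the other nonnegative terms) and use $\log\vol(C_y)$ itself as the common yardstick, which makes the final bookkeeping somewhat more transparent.
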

\begin{proof}
Let $y \to y_0 \in \partial C^\ast \setminus\{0\}$. As in the proof of Lemma \ref{lem:InfinityBody}, choose a point $x_0 \in C$ such that $\langle x_0, y_0 \rangle = 0$. Then we have
\[
\frac1{d+1} \log\vol(C^-_y) \ge - \frac1{d+1} \log \langle x_0, y \rangle + \const
\]
Now, we have $\langle p_i, y_0 \rangle \ge 0$. If for all $i$ this inequality is strict, then all $\log \langle p_i, y \rangle$ remain bounded as $y \to y_0$, so that $F(y) \to +\infty$.

Let $\langle p_1, y_0 \rangle = 0$ and $\langle p_i, y_0 \rangle > 0$ for $i \ne 1$. As $p_1 \in \conv(C)$, there exist $x_1, \ldots, x_k \in C$ such that
\[
p_1 = \sum_{i=1}^k \lambda_i x_i, \quad \lambda_i > 0
\]
This implies $\langle x_i, y_0 \rangle = 0$ for all $i$. It follows that
\[
\frac1{d+1} \log\int_C e^{-\langle x, y \rangle}\, dx \ge - \sum_{i=1}^k \log \langle x_i, y \rangle + \const
\]
On the other hand,
\[
\log \langle p_1, y \rangle = \log\left( \sum_{i=1}^n \lambda_i \langle x_i, y \rangle \right) \ge \log\lambda + \sum_{i=1}^n \frac{\lambda_i}{\lambda} \log \langle x_i, y \rangle
\]
where $\lambda = \sum_{i=1}^k \lambda_i$. Collecting all terms we get
\[
F(y) \ge  \left( - \frac1{d+1} + \sum_{i=1}^k \frac{\lambda_i}{n\lambda} \right)\log \langle x_i, y \rangle + \const
\]
Due to $\lambda_i \le \lambda$ and $n > d+1$, all coefficients before the logarithms are negative. Hence $F(y) \to +\infty$.

If the hyperplane $\langle x, y_0 \rangle = 0$ contains $m$ of the points $p_1, \ldots, p_n$, then $F(y)$ is bounded below by a sum of logarithms with coefficients $ - \frac1{m(d+1)} + \sum_i \frac{\lambda_i}{n\lambda}$, which are still negative provided that $m < \frac{n}{d+1}$.
\end{proof}

The restriction on the points lying on the boundary of the convex hull is necessary for the existence, as the following example shows.
\begin{exl}
\label{exl:d+1Points}
Let $K$ be the union of two $d$-simplices whose intersection is a $(d-1)$-face of both (a bipyramid), and let $p_i, i = 1, \ldots, d$ be the vertices of one of the simplices. Then the centroid of $\{p_1, \ldots, p_n\}$ coincides with the centroid of the corresponding simplex and therefore is different from the centroid of $K$. No projective transformation can help.

Alternatively, take $3$ points on one edge of the tetrahedron and $2$ points on the opposite edge. The centroid of the points lies on a plane parallel to both edges that divides the distance between them in proportion $2:3$. The centroid of the tetrahedron lies on a plane equidistant from both edges.
\end{exl}

\begin{figure}[ht]
\begin{center}
\includegraphics{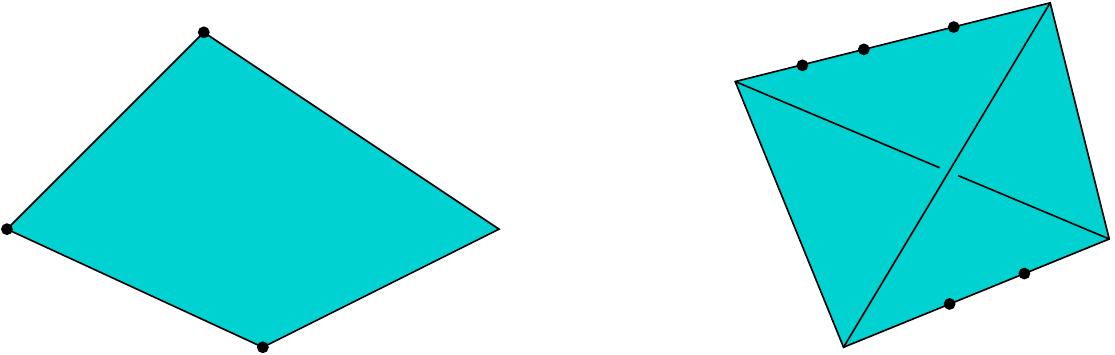}
\end{center}
\caption{Example \ref{exl:d+1Points}: no support hyperplane may contain too many $p_i$.}
\label{fig:CounterBodyMany1}
\end{figure}

In the following example the transformation $\phi$ is not unique.

\begin{exl}
\label{exl:CounterBodyMany}
Let $K \subset \R^2$ be the square with vertices $(\pm 1, 0)$, $(0, \pm 1)$, and
\[
p_1 = \left( -\frac{3}{\sqrt{13}}, 0 \right), \quad p_2 = \left( \frac{3}{\sqrt{13}}, 0 \right)
\]
Both $K$ and $\{p_1, p_2\}$ have centroid at the origin. The images of both sets under a projective non-affine transformation $(x,y) \mapsto \left( \frac{x}{3-x}, \frac{y}{3-x} \right)$ have centroids at $\left( \frac1{12}, 0 \right)$.

Compare this with Examples \ref{exl:CounterManyMany1} and \ref{exl:CounterManyMany2}. The coincidences are not accidental.
\end{exl}

\begin{figure}[ht]
\begin{center}
\includegraphics{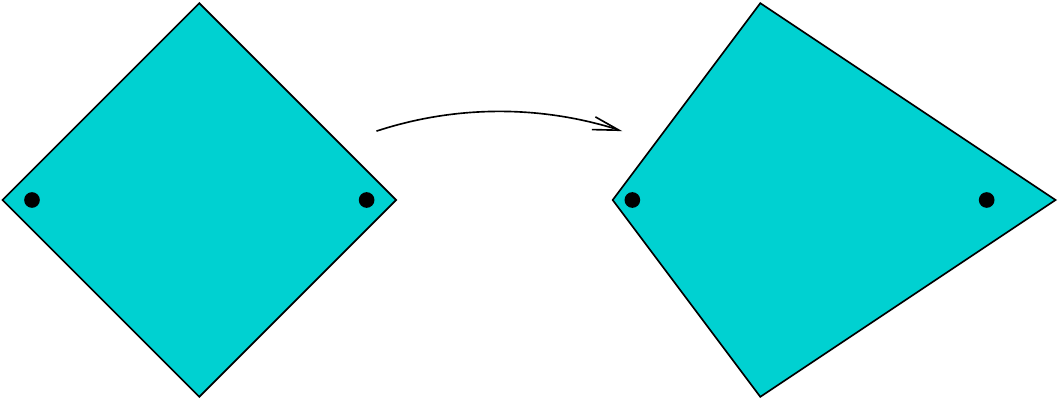}
\end{center}
\caption{A body and a pair of points whose centroids can be fitted in different ways.}
\label{fig:CounterBodyMany}
\end{figure}

\subsection{Centering of points inside a sphere}
Here we prove Theorem \ref{thm:ManyVsBall}. As usual, we form the difference of functions
\[
F \ce F_1 - F_2, \quad F_1(y) \ce \log \sqrt[d+1]{\vol(C_y)}, \quad F_2(y) \ce -\frac1n \sum_{i=1}^n \log \langle p_i, y \rangle
\]
where $C = \{y \in \R^{d+1} \mid y_0^2 \ge y_1^2 + \ldots + y_n^2,\, y_0 \ge 0\}$ is the cone over the unit ball. The domain of $F$ is the interior of the dual cone $C^*$ which coincides this time with $C$.

\begin{lem}
We have
\[
\vol(C_y) = \frac{\beta_n}{(d+1)\|y\|^{d+1}_{1,d}}
\]
where $\beta_n$ is the volume of a $d$-dimensional unit ball, and
\[
\|y\|_{1,d} \ce \sqrt{y_0^2 - y_1^2 - \ldots - y_n^2}
\]
is the Minkowski norm of $y$.
\end{lem}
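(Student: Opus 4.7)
The plan is to reduce the computation to a one-parameter integral by exploiting the Lorentzian symmetries of the forward cone $C$. First, by the first identity of Lemma \ref{lem:Momenta},
\[
\vol(C_y) \;=\; \frac{1}{(d+1)!}\int_C e^{-\langle x,y\rangle}\,dx,
\]
so it suffices to show that this integral equals $d!\,\beta_d/\|y\|_{1,d}^{d+1}$. I would begin by applying a rotation of the spatial coordinates $(x_1,\ldots,x_d)\in\R^d$ to bring $\vec y=(y_1,\ldots,y_d)$ onto the last axis. Both $C$ and the Euclidean pairing $\langle x,y\rangle$ are invariant under such an $\mathrm O(d)$-action, so without loss of generality $y=(y_0,0,\ldots,0,b)$ with $b=|\vec y|\ge 0$. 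The assumption $y\in\inn C^{\ast}$ (which in this case equals $\inn C$) is then precisely $y_0>b$, so that $\|y\|_{1,d}=\sqrt{y_0^2-b^2}>0$.

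The key step is the Lorentz boost
\[
x_0 \;=\; u_0\cosh\xi+u_d\sinh\xi,\quad x_d \;=\; u_0\sinh\xi+u_d\cosh\xi,\quad x_i=u_i\ \ (1\le i\le d-1),
\]
with rapidity $\xi$ chosen so that $\tanh\xi=-b/y_0$. This linear change of variables has Jacobian $\cosh^2\xi-\sinh^2\xi=1$; it preserves the quadratic form $x_0^2-x_1^2-\cdots-x_d^2$ as well as the sign of the zeroth coordinate, hence it preserves $C$. Substituting $\cosh\xi=y_0/\|y\|_{1,d}$ and $\sinh\xi=-b/\|y\|_{1,d}$ into $\langle x,y\rangle=x_0y_0+x_db$, the coefficient of $u_d$ cancels by the choice of $\xi$, and the coefficient of $u_0$ becomes exactly $\|y\|_{1,d}$. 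Thus
\[
\int_C e^{-\langle x,y\rangle}\,dx \;=\; \int_C e^{-\|y\|_{1,d}\, u_0}\,du.
\]

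Finally, I would evaluate the right-hand side by slicing $C$ at heights $u_0=t\ge 0$: each such slice is a Euclidean $d$-ball of radius $t$, with volume $\beta_d\, t^d$. Therefore
\[
\int_C e^{-a u_0}\,du \;=\; \beta_d\int_0^{\infty} t^d e^{-at}\,dt \;=\; \frac{d!\,\beta_d}{a^{d+1}}
\]
with $a=\|y\|_{1,d}$. Dividing by $(d+1)!$ as dictated by Lemma \ref{lem:Momenta} gives the stated formula. The only routine obstacle is the algebraic verification that the boost kills the $u_d$-cross term and rescales $u_0$ by $\|y\|_{1,d}$; this is a two-line check using $\cosh^2\xi-\sinh^2\xi=1$, and is the only computation beyond invariance that the argument requires.
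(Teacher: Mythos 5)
Your proof is correct and rests on the same key idea as the paper's: exploiting the $\Ort(1,d)$-invariance of the cone over the ball to reduce to the case where $y$ is proportional to $e_0$ (the paper invokes homogeneity plus transitivity of the Lorentz group on tangent hyperplanes to the hyperboloid and then computes $\vol(C\cap\{x_0\le 1\})$ directly, whereas you write the boost explicitly and route the computation through the exponential integral of Lemma \ref{lem:Momenta}). The explicit boost verification you defer is exactly as routine as you claim, so the argument is complete.
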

\begin{proof}
As $\vol(C_y)$ is homogeneous of degree $-(d+1)$ with respect to $y$ (scaling $y$ by $\lambda$ results in scaling the truncated cone $C_y$ by $\lambda^{-1}$), it suffices to show that $\vol(C_y) = \frac{\beta_n}{d+1}$ if $\|y\|_{1,d} = 1$.

The equation $\langle x, y \rangle = 1$ of the hyperplane $H_y$ can be rewritten as $\langle x, \bar y \rangle_{1,d} = 1$ with $\bar y = (y_0, -y_1, \ldots, -y_n)$. It follows that for $\|y\|_{1,d} = 1$ the hyperplane $H_y$ is tangent to the upper half of the hyperboloid $\{\|y\|^2_{1,d} = 1\}$.
The group $\Ort(1,n)$ of linear transformations preserving the Minkowski scalar product acts transitively on the set of such hyperplanes, hence there is a transformation $f \in \Ort(1,n)$ that maps $H_y$ to the hyperplane $\{y_0 = 1\}$. Since $|\det f| = 1$, we have $\vol(C_y) = \vol(C \cap \{y_0 \le 1\})$. The latter is a cone of height $1$ over the unit ball and has volume $\frac{\vol_d(B^d)}{d+1}$.
\end{proof}
As a result we have
\begin{equation}
\label{eqn:FBall}
F(y) = -\log \|y\|_{1,d} + \frac1n \sum_{i=1}^n \log \langle p_i, y \rangle + \const
\end{equation}

By results of Section \ref{sec:HomogPoints} and Lemma \ref{lem:GradF},
\[
\grad F(y) = \bc(C_y) - \bc(H_y \cap R_1, \ldots, H_y \cap R_n)
\]
where $R_i$ is the ray generated by $p_i$. Therefore we have to show that the function $F$ has a unique, up to scaling, critical point.
Note also that $F(\lambda y) = F(y)$, so that it suffices to consider the restriction of $F$ to any subset that is represented in all equivalence classes $y \sim \lambda y$. Two convenient choices are $\{y \mid y_0 = 1\}$ and $\{y \mid \|y\|_{1,d} = 1\}$.

\begin{lem}
\label{lem:CoercBall}
The function $F$ tends to $+\infty$ as $y$ tends to a point in $\partial C \setminus\{0\}$.
\end{lem}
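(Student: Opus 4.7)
The first summand $-\log\|y\|_{1,d}$ blows up to $+\infty$ as $y\to y^0$ because $\|y^0\|_{1,d}=0$. The only way $F$ could fail to tend to $+\infty$ is if enough terms $\log\langle p_i,y\rangle$ go to $-\infty$ fast enough to compensate; my plan is to identify the problematic indices, obtain a quadratic lower bound for the corresponding inner products, and compare rates.

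First I would identify the problematic indices. Each $p_i=(1,\tilde p_i)$ with $\tilde p_i\in B^d$ lies in the closed forward cone $C$, and $y^0\in C$, so the Euclidean inner product satisfies $\langle p_i,y^0\rangle\geq 0$, with equality only when $p_i$ and $y^0$ lie on antipodal null generators of $C$, that is, when $p_i$ is a positive multiple of $\bar y^0:=(y_0^0,-\vec y^0)$. The normalisation $(p_i)_0=1$ then pins down the point: $\tilde p_i=-\vec y^0/y_0^0\in S^{d-1}$. Hence, assuming the $p_i$ are distinct, at most one index is antipodal to $y^0$; for every other index, $\log\langle p_i,y\rangle$ stays bounded as $y\to y^0$.

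The quantitative heart of the argument is a quadratic lower bound for the antipodal term. For $p=\lambda\bar y^0$ one has $\langle p,y\rangle=\lambda\langle y^0,y\rangle_{1,d}$. Writing $y_0=\|\vec y\|+\delta$ with $\delta>0$ (so $y\in\inn C$), a direct computation gives
\[
\|y\|_{1,d}^2=\delta(2\|\vec y\|+\delta), \qquad \langle y^0,y\rangle_{1,d}=y_0^0\,\delta + \|\vec y^0\|\|\vec y\|(1-\cos\theta) \geq y_0^0\,\delta,
\]
where $\theta$ is the Euclidean angle between $\vec y$ and $\vec y^0$. Eliminating $\delta$ yields $\langle y^0,y\rangle_{1,d}\geq c\,\|y\|_{1,d}^2$ for $y$ close enough to $y^0$, with $c>0$ depending only on $y^0$. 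Consequently $\log\langle p_i,y\rangle\geq 2\log\|y\|_{1,d}+\const$ for the antipodal index.

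Assembling the estimates,
\[
F(y) \geq \Bigl(-1+\frac{2k}{n}\Bigr)\log\|y\|_{1,d} + \const,
\]
where $k\in\{0,1\}$ counts the antipodal indices. Because $n\geq 3$, the coefficient is at most $-1/3<0$, so $F(y)\to+\infty$ as $\log\|y\|_{1,d}\to-\infty$. The main technical point is recognising that the lower bound for $\langle y^0,y\rangle_{1,d}$ is \emph{quadratic}, not linear, in $\|y\|_{1,d}$; this reflects the tangency of the hyperboloid $\{\|y\|_{1,d}=1\}$ to its asymptotic light cone. A linear bound would have sufficed for any $n\geq 2$, so this quadratic loss is precisely what forces the hypothesis $n\geq 3$ in Theorem~\ref{thm:ManyVsBall}.
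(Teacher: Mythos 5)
Your proof is correct and follows essentially the same route as the paper's: split off the (at most one, for distinct $p_i$) index with $\langle p_i,y^0\rangle=0$, bound its contribution below by $\frac{2}{n}\log\|y\|_{1,d}+O(1)$, and observe that the resulting coefficient $-1+\frac{2}{n}$ is negative precisely because $n\ge 3$. The only difference is one of detail: the paper simply asserts the asymptotic $F=-\frac12\log t+\frac1n\log t+O(1)$ with $t=\langle y,y^0\rangle_{1,d}$, and your computation with $\delta=y_0-\|\vec y\|$ (showing $\langle y^0,y\rangle_{1,d}\ge c\,\|y\|_{1,d}^2$) supplies exactly the estimate that assertion tacitly relies on.
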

\begin{proof}
Let $y \to z$ with $\|z\|_{1,d} = 0$. Then $-\log \|y\|_{1,d} \to +\infty$. If $\langle p_i, z \rangle \ne 0$ for all $i$, then the other summands in \eqref{eqn:FBall} remain bounded, and the sum tends to $+\infty$.

If there is an $i$ such that $\langle p_i, z \rangle_{1,d} = 0$, then $p_i=z$, so that only the $i$-th summand under the sum sign in \eqref{eqn:FBall} tends to $-\infty$. We then have
\[
F(y) = - \frac12 \log t + \frac1n \log t + O(1) \to +\infty
\]
where $t = \langle y, z \rangle_{1,d}$.
\end{proof}

This already implies the existence of a critical point of $F$. For uniqueness we would like to use convexity, but the following example shows that $F$ is not always convex.

\begin{exl}
For $d \ge 3$ put $p_i = e_0 + ae_1 + b_ie_2$ and consider the restriction of $F(y)$ to the line $y = e_0 + te_1$. There we have
\[
F(y) = -\log \sqrt{1-t^2} + \log(1+at) = \frac12 \log\frac{(1+at)^2}{1-t^2}
\]
For $a = 0.95$ this function is not convex.
\end{exl}

The following trick helps.

\begin{lem}
\label{lem:GeodConv}
The function $F$ is geodesically strictly convex with respect to the hyperbolic metric on $\inn C/\{x \sim \lambda x\}$.
\end{lem}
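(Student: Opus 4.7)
The plan is to identify the domain with the standard hyperboloid model of $\mathbb{H}^d$ and then analyze $F$ summand-by-summand along geodesics, using a Minkowski-geometric argument.

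First, the projectivized interior cone $\inn C/\{x\sim\lambda x\}$ is canonically identified with $\mathcal{H} \ce \{y\in\inn C \mid \|y\|_{1,d}=1,\,y_0>0\}$ carrying the standard hyperbolic metric, and its geodesics are the intersections of $\mathcal{H}$ with $2$-planes through the origin in $\R^{d+1}$. On $\mathcal{H}$ the first summand $-\log\|y\|_{1,d}$ of \eqref{eqn:FBall} vanishes identically, so I only need to prove strict geodesic convexity of $\frac1n\sum_i \log\langle p_i,y\rangle$. Writing $p_i=(1,\bar p_i)$ with $\bar p_i\in B^d$ and setting $\tilde p_i\ce(1,-\bar p_i)$, the Euclidean inner product turns into a Minkowski inner product, $\langle p_i,y\rangle_{\mathrm{Eucl}}=\langle\tilde p_i,y\rangle_{1,d}$, and each $\tilde p_i$ lies in the closed future light cone $\overline{C}$ because $\|\tilde p_i\|_{1,d}^2=1-|\bar p_i|^2\ge 0$.

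Next I parameterize a geodesic as $\gamma(t)=\cosh(t)\,e+\sinh(t)\,v$ with $\langle e,e\rangle_{1,d}=1$, $\langle v,v\rangle_{1,d}=-1$, $\langle e,v\rangle_{1,d}=0$. A short computation using $\cosh^2-\sinh^2=1$ yields
\[
\frac{d^2}{dt^2}\log\langle\tilde p_i,\gamma(t)\rangle_{1,d} \;=\; \frac{a_i^2-b_i^2}{(a_i\cosh t + b_i\sinh t)^2},
\]
where $a_i\ce\langle\tilde p_i,e\rangle_{1,d}$ and $b_i\ce\langle\tilde p_i,v\rangle_{1,d}$. To obtain $a_i^2\ge b_i^2$, decompose $\tilde p_i=\alpha e+\beta v+w$ with $w$ Minkowski-orthogonal to both $e$ and $v$. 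Since $\mathrm{span}(e,v)$ has signature $(1,1)$, its orthogonal complement is spacelike, so $\langle w,w\rangle_{1,d}\le 0$; causality of $\tilde p_i$ then gives $0\le\|\tilde p_i\|_{1,d}^2=\alpha^2-\beta^2+\langle w,w\rangle_{1,d}$, hence $a_i^2-b_i^2=\alpha^2-\beta^2\ge 0$, with equality if and only if $\tilde p_i$ is null \emph{and} $w=0$, i.e. $\tilde p_i$ lies on the null cone inside $\mathrm{span}(e,v)$.

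Each summand is therefore geodesically convex, and strictness of the sum along a given geodesic fails only if every $\tilde p_i$ is null and lies in the single $2$-plane $\mathrm{span}(e,v)$. But any $2$-plane in $\R^{d+1}$ meets the null cone in at most two rays, so this would force the distinct $-\bar p_i\in S^{d-1}$ to take at most two values, contradicting $n\ge 3$. Summing gives the strict geodesic convexity of $F$ on $\mathcal{H}$, which is what is required. The main obstacle is identifying the right Minkowski-geometric setup: once one spots that Euclidean duality against $(1,\bar p_i)$ coincides with Minkowski duality against the causal vector $(1,-\bar p_i)$, the reverse Cauchy--Schwarz inequality for causal vectors in a Lorentzian plane does all the work.
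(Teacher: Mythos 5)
Your proof is correct and follows essentially the same route as the paper's: restrict to the hyperboloid $\{\|y\|_{1,d}=1\}$, parametrize geodesics as $q\cosh t + r\sinh t$, check convexity of each summand $\log\langle \bar{p_i}, y\rangle_{1,d}$ along the geodesic, and observe that strictness can only fail if all $p_i$ are null and lie on that geodesic, which is impossible for $n\ge 3$ distinct points. The only difference is cosmetic: where the paper normalizes the parametrization so that each term becomes $\log\cosh t + \const$ (or $t+\const$ in the degenerate case), you compute the second derivative directly as $(a_i^2-b_i^2)/\langle\tilde p_i,\gamma(t)\rangle_{1,d}^2$ and establish $a_i^2\ge b_i^2$ by the signature argument in the Lorentzian plane $\mathrm{span}(e,v)$ --- which in fact makes explicit the step the paper leaves to the reader.
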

\begin{proof}
When restricted to $\{\|y\|_{1,d} = 1\}$, the function $F$ has the form
\begin{equation}
\label{eqn:FHyp}
F(y) = \sum_{i=1}^n \log\langle p_i, y \rangle = \sum_{i=1}^n \log \langle \bar{p_i}, y \rangle_{1,d}
\end{equation}
Every geodesic is represented by a hyperplane section of the hyperboloid $\{\|y\|^2_{1,d} = 1\}$, and has a unit speed parametrization of the form
\[
y(t) = q \cosh t + r \sinh t
\]
where $\|q\|^2_{1,d} = 1$, $\|r\|^2_{1,d} = -1$, $\langle q, r \rangle_{1,d} = 0$. 
Let us study the restrictions of the $i$-th summand in \eqref{eqn:FHyp} to geodesics.

If $\|p_i\|^2_{1,d} > 0$, then on any geodesic it is possible to choose $q$ and $r$ so that $\langle \bar{p_i}, r \rangle_{1,d} = 0$. We get
\begin{equation}
\label{eqn:LogCosh}
\log \langle \bar{p_i}, y \rangle_{1,d} = \log \cosh t + \const
\end{equation}
which is strictly convex.

If $\|p_i\|^2_{1,d} = 0$ and the geodesic doesn't have $\bar{p_i}$ as a limit point, then one can do the same.

If $\|p_i\|^2_{1,d} = 0$ and the geodesic has $\bar{p_i}$ as a limit point, then for any parametrization we have $\langle \bar{p_i}, q \rangle_{1,d} = -\langle \bar{p_i}, r \rangle_{1,d}$, so that
\begin{equation}
\label{eqn:Horo}
\log \langle \bar{p_i}, y \rangle_{1,d} = \log(\cosh t + \sinh t) + \const = t + \const
\end{equation}
Thus along such a geodesic the function is linear.

The only possibility for the sum \eqref{eqn:FHyp} to be linear along a geodesic (and thus non-strictly convex) is that all points $\{p_i\}$ have $\|p_i\|^2_{1,d} = 0$ and lie on that geodesic. This is only possible for $n \le 2$.
\end{proof}

\begin{proof}[Proof of Theorem \ref{thm:ManyVsBall}]
By Lemma \ref{lem:CoercBall}, the function $F$ has a critical point inside the cone $C$. By Lemma \ref{lem:GeodConv}, this critical point is unique up to scaling, because otherwise the restriction of $F$ to a geodesic would have two different critical points, which contradicts the strict geodesic convexity of~$F$.
\end{proof}

\begin{rem}
\label{rem:Moebius}
This argument generalizes that of Springborn \cite{Spr05}, who considers only points on the sphere. In this case, the function $F$ is the sum of hyperbolic distances to horospheres centered at the given points. For a point $p$ inside the ball, the term $\log \langle p, y \rangle$ equals $\log \cosh \dist(p', y)$, where $\dist$ is the hyperbolic distance, and $p' = \bar{p}/\|p\|_{1,d}$.

In the case when all $p_i$ lie on the sphere, the critical point of the function $F$ is the so called conformal barycenter of $\{p_i\}$. In \cite{DE86}, the conformal barycenter was defined for non-atomic measures on the sphere, and the construction for discrete measures was indicated.

The ``centroid'' of points in the hyperbolic space can be defined in different ways. One of the possibilities is to take the affine centroid of the points on the hyperboloid and centrally project back; this point minimizes $\sum_i \cosh \dist(x, p_i)$. Another possibility is to minimize $\sum_i \dist^2(x, p_i)$ as in the general definition of the Riemannian center of mass \cite{GK73}.
\end{rem}

%

\section{Fitting centroids of two bodies}
\label{sec:TwoBodies}
\subsection{Existence and non-uniqueness}
\label{sec:ExistTwoBodies}
We approach Theorem \ref{thm:BodyVsBody} in Reformulation \ref{ref:B}: take cones $C_1$ and $C_2$ over $K_1$ and $K_2$, respectively, and see under what conditions there is an affine hyperplane $H$ such that $C_1 \cap H$ and $C_2 \cap H$ have a common centroid.

Following Section \ref{sec:VolVar}, introduce the functions
\[
F_i \colon \inn C^*_i \to \R, \quad F_i(y) \ce \log \sqrt[d+1]{\vol(C_{i,y})}, \quad i = 1, 2
\]
where $C_{i,y} = \{x \in C_i \mid \langle x, y \le 1\}$ is the cone $C_i$ truncated by the hyperplane~$H_y$. Their difference
\begin{equation}
\label{eqn:FTwoBodies}
F \colon \inn C^*_1 \to \R, \quad F \ce F_1 - F_2
\end{equation}
has, according to Lemma \ref{lem:GradF}, the gradient
\[
\grad F(y) = \bc(C_1 \cap H_y) - \bc(C_2 \cap H_y)
\]
Thus the following lemma holds.

\begin{lem}
Projective transformations that fit the centroids of $K_1$ and $K_2$ correspond, modulo post-composition with affine transformations, to critical points of the function $F$ from \eqref{eqn:FTwoBodies}.
\end{lem}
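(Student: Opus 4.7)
The content of the lemma is essentially to unwind the definitions and apply the results already accumulated. First I would verify that the domain is sensible: since the assumption $K_2 \subset \conv(K_1)$ gives $C_2 \subset C_1$ and therefore $C_1^\ast \subset C_2^\ast$, the function $F_2$ is defined on all of $\inn C_1^\ast$, so $F = F_1 - F_2$ is well-defined there as claimed. Lemma \ref{lem:GradF} applied to each $F_i$ then yields
\[
\grad F(y) = \bc(C_1 \cap H_y) - \bc(C_2 \cap H_y),
\]
so $y \in \inn C_1^\ast$ is a critical point of $F$ precisely when the affine hyperplane $H_y = \{x \mid \langle x, y \rangle = 1\}$ cuts $C_1$ and $C_2$ in sections with a common centroid.

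Next I would identify these critical $y$'s with projective transformation classes via Reformulation \ref{ref:B} together with Propositions \ref{prp:A} and \ref{prp:B}: a class of admissible projective transformations of $\R^d$ modulo post-composition with affine ones corresponds to a hyperplane section of the common cone modulo parallel translation, and two hyperplanes $H_y, H_{y'}$ are parallel if and only if $y' = \lambda y$ for some $\lambda > 0$. To keep the correspondence clean I would record that $F$ is itself invariant under this scaling: since $C_{i,\lambda y} = \lambda^{-1} C_{i,y}$, one has $\vol(C_{i,\lambda y}) = \lambda^{-(d+1)} \vol(C_{i,y})$, so $F_1$ and $F_2$ each shift by $-\log\lambda$ and $F$ is unchanged. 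Hence the critical locus of $F$ is a union of rays in $\inn C_1^\ast$, and each such ray corresponds to a unique class of admissible projective transformations fitting the centroids of $K_1$ and $K_2$.

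There is no real obstacle here: Lemma \ref{lem:GradF} has already done the analytic work, and Section \ref{sec:Reform} has already set up the dictionary between $y$'s and projective transformations. The only thing to watch is not to conflate the two natural equivalence relations involved (hyperplanes modulo parallel translation on one side, vectors $y$ modulo positive scaling on the other); the scale invariance of $F$ is exactly what makes them compatible, so the proof reduces to citing the gradient formula and the reformulations.
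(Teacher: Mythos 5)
Your proposal is correct and follows the same route as the paper, which derives the lemma directly from the gradient formula $\grad F(y) = \bc(C_1 \cap H_y) - \bc(C_2 \cap H_y)$ obtained by applying Lemma \ref{lem:GradF} to $F_1$ and $F_2$, together with the dictionary of Section \ref{sec:Reform}. The extra bookkeeping you supply (the domain inclusion $\inn C_1^\ast \subset \inn C_2^\ast$ and the scale invariance $F(\lambda y)=F(y)$ matching hyperplanes modulo translation with rays of $y$'s) is exactly what the paper leaves implicit here and uses later, so there is nothing to correct.
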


The existence of a critical point follows by the usual argument.

\begin{proof}[Existence part of Theorem \ref{thm:BodyVsBody}]
By Lemma \ref{lem:InfinityBody}, the function $F_1(y)$ tends to $+\infty$ as $y$ tends to $\partial C^*_1 \setminus \{0\}$. The function $F_2$ is continuous on $\inn C^*_2 \supset C^*_1 \setminus\{0\}$, and therefore bounded on $\inn C^*_1$. Thus $F(y) \to +\infty$ as $y \to \partial C^*_1 \setminus \{0\}$.
\end{proof}

As next we give an example where the projective transformation fitting the centroids is not unique.
\begin{exl}
\label{exl:CounterBodyBody}
Take a unit disk and the following rectangle inside it:
\[
K_1 = \{(x,y) \in \R^2 \mid x^2 + y^2 \le 1\}, \quad K_2 = \{(x,y) \in \R^2 \mid |x| \le a, |y| \le \frac{2}{\sqrt{5}}\}
\]
for some $a < \frac1{\sqrt{5}}$. Both $K_1$ and $K_2$ have centroid at the origin. On the other hand, the projective transformation
\[
(x,y) \mapsto \left( \frac{\sqrt{2} x}{y + \sqrt{3}}, \frac{\sqrt{3} y + 1}{y + \sqrt{3}} \right)
\]
maps the disk $K_1$ to itself, and the rectangle $K_2$ to a trapezoid that, as a tedious computation shows, also has centroid at the origin.
See Fig.~\ref{fig:CounterTwoBodies}.

An example of this sort is possible whenever the rectangle has a side which is longer than $\sqrt{3}$.


\begin{figure}[ht]
\begin{center}
\includegraphics{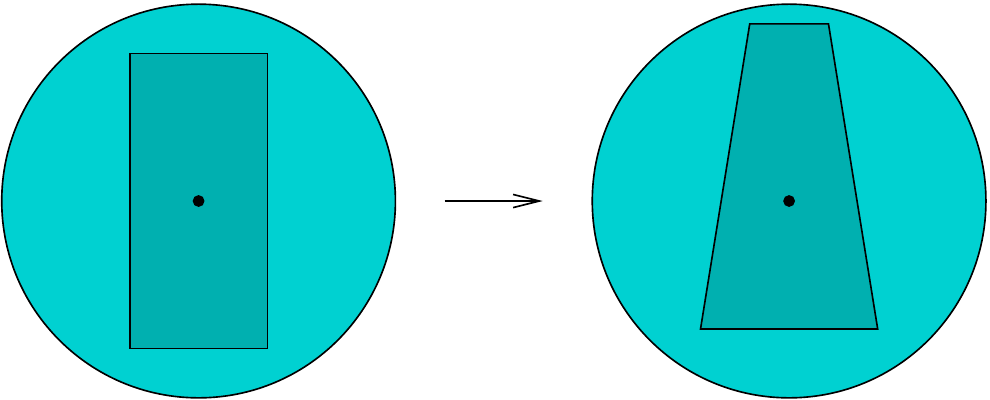}
\end{center}
\caption{Two bodies whose centroids can be fitted in different ways.}
\label{fig:CounterTwoBodies}
\end{figure}
%
\end{exl}

\subsection{Uniqueness for $K_2$ deep inside $K_1$}
We will consider the restriction of $F$ to the section of $\inn C_1^*$ by the hyperplane $y_0 = 1$. Since $F(\lambda y) = F(y)$, every critical point of this restriction is a critical point of $F$. We are not able to prove that $F|_{y_0=1}$ is convex under assumption \eqref{eqn:CR}, but we can prove that it is strictly convex at every critical point. Since the indices of critical points of a function defined on a ball and tending to $+\infty$ near boundary sum up to $0$, this implies that the critical point is unique.

\begin{lem}
\label{lem:D2F}
Let $y \in \R^{d+1}$ be such that $y_0 = 1$. Then for every vector $u \in \R^{d+1}$ with $u_0=0$ we have
\begin{multline*}
D^2_{u,u}F(y) = (d+2) \left( \frac{I_u(\phi_y(K_1))}{\vol(\phi_y(K_1))} - \frac{I_u(\phi_y(K_2))}{\vol(\phi_y(K_2))} \right)\\
+ \langle \bc(\phi_y(K_1)), u \rangle^2 - \langle \bc(\phi_y(K_2)), u \rangle^2
\end{multline*}
where $I_u(K) \ce \int_K \langle x-\bc(K), u \rangle^2\, dx$ is the moment of inertia of $K$ with respect to the vector $u$.
\end{lem}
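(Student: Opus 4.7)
The plan is to build directly on formula \eqref{eqn:SecDerBody} obtained in the proof of Proposition \ref{prp:ConvexBody}. First I would apply that formula separately to $F_1$ and $F_2$ at the given $y$ to get, for $i = 1, 2$,
\[
D^2_{u,u}F_i(y) = (d+2) \frac{\int_{C_i\cap H_y} \langle x, u \rangle^2\, dx}{\vol_d(C_i \cap H_y)} - (d+1) \langle \bc(C_i \cap H_y), u \rangle^2,
\]
rewriting the first-moment term as $\langle \bc(C_i \cap H_y), u\rangle^2$ since the mean of $\langle x, u\rangle$ over the section is $\langle \bc, u\rangle$.

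The next step is to transfer these integrals from the slice $C_i \cap H_y \subset H_y$ to $\phi_y(K_i) \subset \R^d$. By Proposition \ref{prp:B}, $\phi_y(K_i) = \pi(C_i \cap H_y)$ where $\pi$ is the parallel projection along $e_0$; since $H_y$ has unit normal $y/\|y\|$ and $y_0 = 1$, the Jacobian of $\pi|_{H_y}$ is the constant $1/\|y\|$. Crucially, the hypothesis $u_0 = 0$ implies $\langle x, u\rangle = \langle \pi(x), u\rangle$, so the integrand is unchanged by the projection. The common factor $\|y\|$ therefore cancels in every ratio, and we may replace $\int_{C_i \cap H_y} \langle x, u\rangle^k\, dx / \vol_d(C_i \cap H_y)$ by the analogous ratio over $\phi_y(K_i)$; likewise $\langle \bc(C_i \cap H_y), u\rangle = \langle \bc(\phi_y(K_i)), u\rangle$.

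Finally I would use the centroid shift
\[
\int_A \langle z, u\rangle^2\, dz = I_u(A) + \langle \bc(A), u\rangle^2 \vol(A),
\]
which follows from expanding $\langle z, u\rangle = \langle z - \bc(A), u\rangle + \langle \bc(A), u\rangle$ and using $\int_A (z - \bc(A))\, dz = 0$. Substituting with $A = \phi_y(K_i)$ gives
\[
D^2_{u,u}F_i(y) = (d+2) \frac{I_u(\phi_y(K_i))}{\vol(\phi_y(K_i))} + \langle \bc(\phi_y(K_i)), u\rangle^2,
\]
where the coefficient $1 = (d+2) - (d+1)$ in front of the centroid-squared term comes from combining the two summands in the earlier formula. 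Subtracting the $i=2$ equation from the $i=1$ equation yields the stated expression for $D^2_{u,u}F(y) = D^2_{u,u}F_1(y) - D^2_{u,u}F_2(y)$.

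There is no serious obstacle here; the only points requiring care are ensuring that the Jacobian of $\pi|_{H_y}$ really does cancel (which is exactly why the hypothesis $u_0 = 0$ is needed, so that $\langle x, u\rangle$ passes through $\pi$) and that the centroid of the section corresponds, via $\pi$, to the centroid of $\phi_y(K_i)$ (a consequence of $\pi$ being affine and volume-distorting only by a constant factor).
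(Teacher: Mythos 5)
Your proposal is correct and follows essentially the same route as the paper's proof: apply \eqref{eqn:SecDerBody} to each $F_i$, transfer the integrals from $C_i\cap H_y$ to $\phi_y(K_i)$ via the parallel projection (with the $\|y\|$ factor cancelling in the ratios and $u_0=0$ ensuring the integrand and the centroid term are unaffected), and then apply the parallel-axis identity to turn $(d+2)-(d+1)$ into the coefficient $1$ of the centroid-squared term. No gaps.
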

\begin{proof}
From \eqref{eqn:SecDerBody} we have
\[
D^2_{u,u}F_i(y) = (d+2) \frac{\int_{C_i\cap H_y} \langle x, u \rangle^2\, dx}{\vol_d(C_i \cap H_y)} - (d+1) \langle \bc(C_i \cap H_y), u \rangle^2
\]
By Proposition \ref{prp:B}, $\phi_y(K_i)$ is the image of $C_i \cap H_y$ under parallel projection along $e_0$. Therefore
\begin{gather*}
\int_{C_i \cap H_y} \langle x, u \rangle^2\, dx = \|y\| \int_{\phi_y(K_i)} \langle x, u \rangle^2\, dx \\
\vol_d(C_i \cap H_y) = \|y\| \vol_d(\phi_y(K_i))
\end{gather*}
Also, $\bc(\phi_y(K_i))$ differs from $\bc(C_i \cap H_y)$ by a multiple of $e_0$. It follows that for $u_0=0$ we can replace $C_i \cap H_y$ by $\phi_y(K_i)$ in the formula for $D^2F_i$.

Further, for any $K \subset \R^d$ and $\bc = \bc(K)$ we have
\begin{multline*}
\int_K \langle x - \bc, u \rangle^2\, dx = \int_K \langle x, u \rangle^2\, dx - 2 \langle \bc, u \rangle \int_K \langle x, u \rangle\, dx + \int_K \langle \bc, u \rangle^2\, dx\\
= \int_K \langle x, u \rangle^2\, dx - \vol(K) \langle \bc, u \rangle^2
\end{multline*}
By substituting this into the last equation we obtain
\[
D^2_{u,u}F_i(y) = (d+2) \frac{I_u(\phi_y(K_i))}{\vol_d(\phi_y(K_i))} + \langle \bc(\phi_y(K_i)), u \rangle^2
\]
and the lemma follows.
\end{proof}

\begin{lem}
\label{lem:MomWidth}
Let $K \subset \R^d$ be a convex body, and $u \in \R^d$ be a non-zero vector. Then we have
\[
\frac{W_u(K)^2}{2(d+1)(d+2)} \le \frac{I_u(K)}{\vol(K)} \le \frac{W_u(K)^2}{12}
\]
where $W_u(K) = \frac{1}{\|u\|}(\max_{x\in K} \langle x, u \rangle - \min_{x\in K} \langle x, u \rangle)$ is the width of $K$ in the direction of $u$. The lower bound is achieved for a symmetric double cone over any $(d-1)$-dimensional body, the upper bound is achieved for the cylinder over any $(d-1)$-dimensional body.
\end{lem}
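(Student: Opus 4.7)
The plan is to reduce the two-sided inequality to a one-dimensional statement via Brunn--Minkowski and then identify the two extremal shapes. After assuming $\|u\|=1$ and $u=e_1$, introduce the slicing function $f(t)\ce\vol_{d-1}(K\cap\{x_1=t\})$; Brunn--Minkowski shows that $g\ce f^{1/(d-1)}$ is concave on its support, an interval of length $W$ which, after translation and rescaling, we take to be $[0,1]$. Since $\vol(K)=\int_0^1 f\,dt$ and $I_u(K)=\int_0^1(t-\bar t)^2 f\,dt$, the lemma reduces to showing
\[
\frac{1}{2(d+1)(d+2)}\;\le\; \frac{\int_0^1(t-\bar t)^2 g^{d-1}\,dt}{\int_0^1 g^{d-1}\,dt}\;\le\;\frac{1}{12}
\]
for every concave $g\ge 0$ on $[0,1]$. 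Both extremal values are verified directly: the symmetric tent $g(t)=2\min(t,1-t)$ yields the lower value via the Beta identity $B(d-1,4)/B(d-1,2)=6/[(d+1)(d+2)]$, and a constant $g$ yields the uniform density with variance $1/12$.

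For the lower bound I would use the level-set decomposition of $g^{d-1}$. With $I_r\ce\{g>r\}=[a_r,b_r]$, $L_r\ce b_r-a_r$ and $c_r\ce(a_r+b_r)/2$, the identity $g^{d-1}(t)=(d-1)\int_0^M r^{d-2}\mathbf{1}_{I_r}(t)\,dr$ (with $M\ce\max g$) together with $\int_{I_r}t^2\,dt=L_r c_r^2+L_r^3/12$ gives the clean split
\[
\frac{\int(t-\bar t)^2 g^{d-1}\,dt}{\int g^{d-1}\,dt}\;=\;\frac{1}{12}\,E_w[L_r^2]\;+\;\operatorname{Var}_w(c_r),
\]
where $w_r\,dr\ce r^{d-2}L_r\,dr$ is renormalized to a probability measure on $[0,M]$. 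Because $g$ is concave, $r\mapsto L_r$ is itself concave on $[0,M]$ with $L_0=1$ and $L_M=0$. Parametrizing by $s=L_r$ and $\rho(s)\ce-(L^{-1})'(s)\ge 0$, this concavity becomes the condition that $\rho$ is non-decreasing, under the single constraint $\int_0^1\rho\,ds=M$. A Lagrangian analysis in this parametrization shows that the ratio $\int r(s)^{d-2}s^3\rho(s)\,ds/\int r(s)^{d-2}s\rho(s)\,ds$ is minimized by constant $\rho$, corresponding to the linear $L_r=1-r/M$, i.e.\ $g$ a tent; the minimum value $6/[(d+1)(d+2)]$ follows from another Beta-function computation. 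Dropping the non-negative term $\operatorname{Var}_w(c_r)$ gives the lower bound, with equality forcing $L_r=1-r/M$ and $c_r\equiv 1/2$, which is the symmetric double cone.

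For the upper bound, concavity of $g>0$ implies that $\log f=(d-1)\log g$ is concave, so $f$ is log-concave, and the inequality becomes the classical fact that any log-concave probability density on $[0,1]$ has variance at most $1/12$, with equality only for the uniform density. In the level-set language this is equivalent to the complementary estimate $\operatorname{Var}_w(c_r)\le\frac{1}{12}(1-E_w[L_r^2])$, and this is the step I expect to be the main obstacle. The pointwise bound $c_r\in[L_r/2,\,1-L_r/2]$ by itself only yields $\operatorname{Var}_w(c_r)\le E_w[(1-L_r)^2]/4$, which is too weak when $L_r$ is small; the necessary extra ingredient is the joint monotonicity of $a_r$ and $b_r$ in $r$ coming from the nested structure $I_{r'}\subset I_r$ for $r'>r$ (itself a consequence of $g$ being concave), which prevents $c_r$ from drifting far from $1/2$ while $L_r$ is close to $1$. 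Equality throughout forces $L_r\equiv 1$, i.e.\ $g$ constant, which corresponds to $K$ being a cylinder in direction $u$.
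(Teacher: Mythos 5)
Your reduction to one dimension via Brunn--Minkowski and the level-set identity
\[
\frac{\int(t-\bar t)^2 g^{d-1}\,dt}{\int g^{d-1}\,dt}\;=\;\frac{1}{12}\,E_w[L_r^2]\;+\;\operatorname{Var}_w(c_r)
\]
are correct (I checked the layer-cake computation and the numerics for the tent and the constant), and this framework is genuinely different from the paper's, which never leaves the body: the paper first rounds the slices (Schwarz symmetrization), then proves the lower bound by Steiner symmetrization followed by replacing each half with a cone, and the upper bound by an iterated mass-moving procedure converging to the cylinder. The problem is that, as written, each half of your argument has a real gap, and the two gaps are exactly the substantive content of the lemma.

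For the lower bound, after dropping $\operatorname{Var}_w(c_r)\ge 0$ everything rests on showing that $E_w[L_r^2]=\int_0^M r^{d-2}L_r^3\,dr\,/\int_0^M r^{d-2}L_r\,dr\ \ge\ 6/[(d+1)(d+2)]$ for every concave decreasing $L$ with $L_0=1$, $L_M=0$, and this is dispatched with ``a Lagrangian analysis \ldots shows that the ratio is minimized by constant $\rho$.'' That \emph{is} the whole difficulty: in your parametrization $r(s)=\int_s^1\rho$ depends nonlinearly on $\rho$, the admissible set is the cone of non-decreasing $\rho\ge 0$ (whose extreme rays are step functions, not constants), and $M$ is not fixed, so a first-order stationarity computation neither identifies the minimizer nor excludes extremizers on the boundary of the monotonicity constraint. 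The claim is surely true -- it is equivalent to the paper's assertion that the double cone minimizes $I_u/\vol$ -- but it needs an actual argument, e.g.\ a rearrangement or mass-transport comparison with the tent, which is essentially what the paper supplies. For the upper bound you either invoke ``the classical fact that a log-concave density on $[0,1]$ has variance at most $1/12$'' or pose the equivalent level-set estimate $\operatorname{Var}_w(c_r)\le\tfrac1{12}\bigl(1-E_w[L_r^2]\bigr)$, which you yourself flag as the main obstacle and admit your pointwise bound on $c_r$ does not reach. The log-concave variance bound is true, but it is not elementary: the standard proofs go through localization or precisely the kind of monotone mass transport the paper uses, so appealing to it as ``classical'' without proof or reference leaves the upper bound unestablished. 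In short: a sound and attractive reduction, but both extremal statements -- the tent minimizes, the constant maximizes -- are asserted rather than proved.
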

\begin{proof}
If each section of $K$ orthogonal to $u$ is replaced by a $(d-1)$-ball of the same radius, then the body remains convex and preserves its volume and moment of inertia in the direction $u$. Thus, without loss of generality, $K$ is a ``rotation body'' with axis $u$.

We will use the fact that moving mass away from the centroid increases the moment of inertia, and moving towards decreases the moment.

By the above principle, the Steiner symmetrization with respect to $u^\perp$ preserves the volume but decreases the moment. The resulting body is symmetric with respect to a hyperplane orthogonal to $u$, and it is possible to move more mass towards the centroid by replacing each of the symmetric halves by a cone of the same volume with the base on the hyperplane of symmetry. The two steps are illustrated on Fig. \ref{fig:MomentCone}. The convex profiles stand for the radii of the sections orthogonal to $u$; equally colored regions correspond to sets of equal $d$-volume.

\begin{figure}[ht]
\begin{center}
\includegraphics{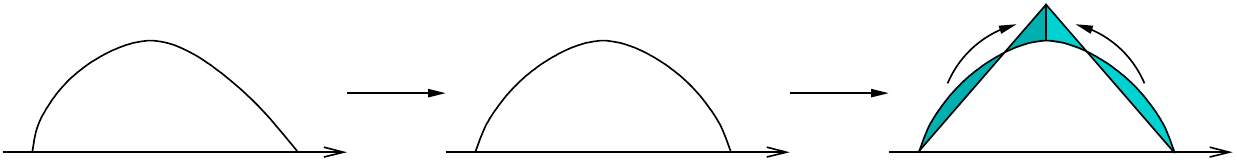}
\end{center}
\caption{Minimizing the moment of inertia for fixed width and volume.}
\label{fig:MomentCone}
\end{figure}

For a double cone of width $2a$ we have
\[
\frac{I_u(K)}{\vol(K)} = \frac{\int_0^a (a-t)^2t^{d-1}\, dt}{\int_0^a t^{d-1}\, dt} = \frac{2a^2}{(d+1)(d+2)} = \frac{W_u(K)^2}{2(d+1)(d+2)}
\]
which yields the lower bound in the theorem.

In order to prove the upper bound, first replace $K$ with a truncated cone $K_1$ whose parts on either sides from the hyperplane through the centroid of $K$ have the same volumes as the corresponding parts of $K$. One can go from $K$ to $K_1$ by moving mass away from the centroid, see Figure \ref{fig:MomentCyl}, therefore $K_1$ has a bigger moment of inertia.

\begin{figure}[ht]
\begin{center}
\includegraphics[width=.95\textwidth]{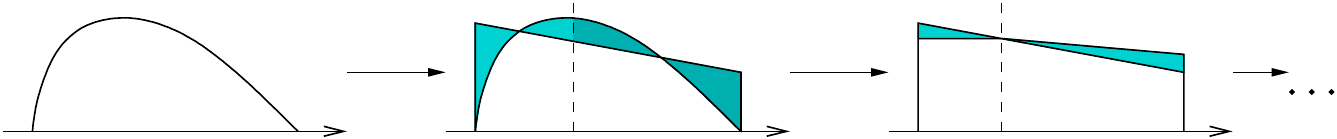}
\end{center}
\caption{Maximizing the moment of inertia for fixed width and volume.}
\label{fig:MomentCyl}
\end{figure}

It turned out unexpectedly hard to prove directly that the cylinder maximizes the moment of inertia among all truncated cones of a fixed volume, therefore we will continue to move mass. We replace $K_1$ by a union $K_2$ of a cylynder and a truncated cone as shown on Figure \ref{fig:MomentCyl}. A direct computation shows that the requirement $\vol(K_2 \setminus K_1) = \vol(K_1 \setminus K_2)$ leads to a convex $K_2$ (the radius of the cone decreasing as on Figure \ref{fig:MomentCyl}). Also, the section of $K_2 \setminus K_1$ by a hyperplane orthogonal to $u$ has a smaller volume as the section of $K_1 \setminus K_2$ at the same distance from the centroid of $K_1$. This allows to map $K_1 \setminus K_2$ to $K_2 \setminus K_1$ so that the mass is moved away from the centroid of $K_1$. Thus $I_u(K_2) \ge I_u(K_1)$.

The body $K_2$ can be replaced by a truncated cone $K_3$ with a bigger moment, as it was done at the first step. By iterating the procedure, we obtain a sequence of bodies converging to a cylinder. This implies that the cylinder maximizes the moment of inertia for given volume and width.

The ratio for the cylinder of width $2a$ equals
\[
\frac{I_u(K)}{\vol(K)} = \frac{\int_0^a t^2\, dt}{a} = \frac{a^2}{3} = \frac{W_u(K)^2}{12}
\]
\end{proof}

\begin{lem}
\label{lem:CRWidth}
Let $a_1 < a_2 < b_2 < b_1$. Then for every $\kappa \in (0,1)$ we have
\[
\crra(b_2, a_2; a_1, b_1) < \frac{(1+\kappa)^2}{(1-\kappa)^2} \Rightarrow \frac{b_2 - a_2}{b_1 - a_1} < \kappa
\]
\end{lem}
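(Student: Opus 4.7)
The plan is to reduce the inequality to a one-variable optimization problem by a convenient change of parameters. Set $L \ce b_1 - a_1$, $\ell \ce b_2 - a_2$, $s \ce a_2 - a_1$, and $t \ce b_1 - b_2$, so that $L = s + \ell + t$ with $s, t > 0$ by the strict inequalities. A direct expansion of the cross-ratio using the formula $\crra(p,q;a,b) = \frac{(p-a)(q-b)}{(p-b)(q-a)}$ yields
\[
\crra(b_2, a_2; a_1, b_1) = \frac{(s+\ell)(t+\ell)}{st},
\]
after canceling the two minus signs that come from $a_2 - b_1$ and $b_2 - b_1$.

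Next I would rescale by $L$: introduce $u \ce \ell/L$, $\alpha \ce s/L$, $\beta \ce t/L$, so that $\alpha + \beta = 1-u$ and $\alpha, \beta, u > 0$. The cross-ratio becomes
\[
\frac{(\alpha+u)(\beta+u)}{\alpha\beta} = 1 + \frac{u(\alpha+\beta) + u^2}{\alpha\beta} = 1 + \frac{u}{\alpha\beta}.
\]
The key step is to minimize this quantity over all admissible $(\alpha,\beta)$ with fixed $u$. Since $\alpha + \beta = 1-u$, AM--GM gives $\alpha\beta \le ((1-u)/2)^2$, with equality precisely when $\alpha = \beta$. Hence
\[
\crra(b_2, a_2; a_1, b_1) \;\ge\; 1 + \frac{4u}{(1-u)^2} \;=\; \frac{(1+u)^2}{(1-u)^2}.
\]

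The function $x \mapsto \frac{(1+x)^2}{(1-x)^2}$ is strictly increasing on $[0,1)$, so combining this lower bound with the hypothesis $\crra(b_2,a_2;a_1,b_1) < \left(\frac{1+\kappa}{1-\kappa}\right)^2$ forces $u < \kappa$, which is exactly $\frac{b_2-a_2}{b_1-a_1} < \kappa$, as required. There is no real obstacle here beyond choosing the right parametrization; the content is a one-line AM--GM argument. It is worth noting that the minimizing configuration $\alpha = \beta$ is the one in which the inner interval $[a_2, b_2]$ is centered inside $[a_1, b_1]$, which fits with the symmetric extremal examples (cross-polytope and parallelepiped) used to prove sharpness of the bound in Theorem \ref{thm:BodyVsBody}.
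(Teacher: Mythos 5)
Your proof is correct and follows essentially the same route as the paper, which also reduces to the concentric configuration and computes the cross-ratio there as $\frac{(1+y/x)^2}{(1-y/x)^2}$. The one difference is in your favor: the paper merely asserts that the concentric position is extremal for fixed cross-ratio, whereas your AM--GM step (maximizing $\alpha\beta$ subject to $\alpha+\beta=1-u$) actually proves the corresponding extremality claim.
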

\begin{proof}
For a fixed cross-ratio, the maximum of $b_2 - a_2$ is achieved when the segments are concentric, that is $x=-y$. We have
\[
\crra(y,-y;-x,x) = \frac{y+x}{-y+x} : \frac{y-x}{-y-x} = \frac{(x+y)^2}{(x-y)^2} = \frac{(1+\frac{y}{x})^2}{(1-\frac{y}{x})^2}
\]
and the lemma follows.
\end{proof}

\begin{lem}
\label{lem:Morse}
Let $D \subset \R^d$ be homeomorphic to the ball $B^d$, and $F \colon \inn D \to \R$ be a smooth function on the interior of $D$. Assume that $F(y) \to +\infty$ as $y \to \partial D$ and that $D^2F(y)$ is positive definite at every critical point. Then the critical point of $F$ is unique.
\end{lem}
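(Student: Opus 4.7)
The plan is to count the critical points of $F$ via the connected components of its sublevel sets. By positive-definiteness of $D^2F(y_\ast)$ at a critical point $y_\ast$, the Morse lemma provides local coordinates in which $F = F(y_\ast) + \|y - y_\ast\|^2$; thus every critical point is an isolated strict local minimum of Morse index $0$, $F$ is Morse, and its critical set is discrete. Coercivity ($F \to +\infty$ near $\partial D$) implies that each sublevel set $A_c \ce \{y \in \inn D : F(y) \le c\}$ is compact in $\inn D$, so $A_c$ contains only finitely many critical points.

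Next I would let $n(c)$ denote the number of connected components of $A_c$ and study its behaviour as $c$ increases. Two standard consequences of Morse theory provide the input: (i) if $[a,b]$ contains no critical value, the negative gradient flow deformation-retracts $A_b$ onto $A_a$, so $n(a) = n(b)$; (ii) if the only critical values in $(a,b)$ correspond to index-$0$ critical points, then $A_b$ is obtained from $A_a$ by attaching disjoint small disks around each such new critical point, so $n$ jumps by exactly the number of new critical points. Since every critical point here has index $0$, $n(c)$ is a non-decreasing step function equal to the number of critical points with $F$-value at most $c$.

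The final step is to show $n(c) = 1$ for $c$ beyond every critical value. Fix such a $c$ and two points $p, q \in A_c$. Since $\inn D$ is homeomorphic to an open ball it is path-connected, so pick a continuous path $\gamma \colon [0,1] \to \inn D$ from $p$ to $q$; compactness of $[0,1]$ yields $c' \ce \max(F \circ \gamma) < \infty$. If $c' \le c$ then $\gamma$ already lies in $A_c$ and joins $p$ to $q$. Otherwise $[c, c']$ contains no critical values, so $A_{c'}$ deformation-retracts onto $A_c$; since $p$ and $q$ lie in a common component of $A_{c'}$ (witnessed by $\gamma$), they also lie in a common component of $A_c$. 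Hence $A_c$ is connected and $n(c) = 1$, so the total number of critical points equals $\lim_{c \to \infty} n(c) = 1$.

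The main obstacle is justifying the Morse-theoretic deformation statements on the open manifold $\inn D$. This is routine: smoothness of $F$ together with the coercivity assumption makes sublevel sets compact, which trivializes the Palais--Smale condition and lets the classical gradient-flow retraction arguments (as in Milnor's \emph{Morse Theory}) go through verbatim. A cleaner but less elementary alternative would be to invoke the Morse equality $\sum_k (-1)^k c_k = \chi(\inn D) = 1$ directly; since $c_k = 0$ for $k \ge 1$ by hypothesis, this forces $c_0 = 1$.
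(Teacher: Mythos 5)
Your proposal is correct and takes essentially the same route as the paper's proof: both use Morse theory to show that a sublevel set of $F$ splits into one component per critical point below that level (every critical point having index $0$ by the positive-definiteness hypothesis), and both then join two critical points by a path in the connected set $\inn D$ and raise the level above the path's maximum of $F$ to conclude that the sublevel set is connected, forcing the critical point count to be one. The only cosmetic differences are that the paper phrases this as a direct contradiction between ``$k$ disjoint disks'' and ``same component'' rather than via your monotone component-counting function $n(c)$, and your closing remark about the Morse equality $\sum_k (-1)^k c_k = \chi = 1$ is a genuinely shorter alternative that the paper does not use.
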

\begin{proof}
The function $F$ attains its minimum in $D$, therefore it has at least one critical point.
Due to $D^2F(y) > 0$ at all critical points, critical points are isolated. Due to $F(y) \to +\infty$ as $y \to \partial D$, the critical values form a discrete subset of $\R$, and in particular can be ordered:
\[
F(y^1) \le F(y^2) \le \cdots \le F(y^k) \le \cdots
\]
Let $a \in (F(y^k), F(y^{k+1}))$. By the Morse theory, the set $F^{-1}(-\infty, a)$ is homeomorphic to the union of $k$ open disks, with $y_1, \ldots, y_k$ lying in different disks.

On the other hand, if we choose a path $\alpha \colon [0,1] \to D$ joining $y_1$ and $y_2$ and take
\[
a > \max_{y \in \alpha[0,1]} F(y)
\]
then $y_1$ and $y_2$ lie in the same component of $F^{-1}(-\infty, a)$. This contradiction shows that the critical point is unique.
\end{proof}

\begin{proof}[Proof of uniqueness in Theorem \ref{thm:BodyVsBody}]
We are considering the restriction of $F$ to $\inn(C_1^* \cap \{y_0 = 1\}) = \inn K_1^\circ$.
From Section \ref{sec:ExistTwoBodies} we know that $F(y) \to +\infty$ as $y \to \partial K_1^\circ$.
Let us show that under assumption \eqref{eqn:CR} the quadratic form $D^2F$ is positive definite at all critical points.
Due to Lemma \ref{lem:D2F}, at a critical point we have
\[
D^2_{u,u}F(y) = (d+2) \left( \frac{I_u(\phi_y(K_1))}{\vol(\phi_y(K_1))} - \frac{I_u(\phi_y(K_2))}{\vol(\phi_y(K_2))} \right)
\]
Consider the orthogonal to~$u$ support hyperplanes $a_1, b_1$ and $a_2, b_2$ of $\phi_y(K_1)$ and $\phi_y(K_2)$. They are images under $\phi_y$ of support hyperplanes of $K_1$ and $K_2$ that are either parallel or share a $(d-2)$-dimensional affine subspace. Since the cross-ratio is projectively invariant, \eqref{eqn:CR} holds for $a_1, b_1$ and $a_2, b_2$.
By Lemma \ref{lem:CRWidth} we have
\[
\frac{W_u(\phi_y(K_2))}{W_u(\phi_y(K_1))} < \kappa_d = \sqrt{\frac{6}{(d+1)(d+2)}}
\]
which, by Lemma \ref{lem:MomWidth}, implies
\[
\frac{I_u(\phi_y(K_2))}{\vol(\phi_y(K_2))} < \frac{I_u(\phi_y(K_1))}{\vol(\phi_y(K_1))}
\]
Thus, at every critical point $D^2_{u,u}F(y) > 0$ for all $u \ne 0$. By Lemma \ref{lem:Morse}, this implies that the critical point is unique.
\end{proof}

Let us show that the bound \eqref{eqn:CR} is sharp. Take as $K_1$ a double cone of height $1$ over a $(d-1)$-dimensional subset of $\R^{d-1}$ with centroid at the origin, and as $K_2$ a cylinder of height $> \kappa_d$ over a similar, but smaller, set. For example, $K_1$ may be the standard cross-polytope, and $K_2$ a rectangular parallelepiped. Then the centroids of $K_1$ and $K_2$ coincide, so that $e_0$ is a critical point of the function $F$. The quadratic form $D^2F(e_0)$ takes a negative value in the direction of the axis of $K_1$ and $K_2$. Therefore $e_0$ is not the minimum point of $F$. Thus a minimum point provides a non-affine projective transformation that fits the centroids of $K_1$ and $K_2$.

\subsection{If one of the bodies is a ball}

\begin{lem}
\label{lem:MomBall}
For a $d$-dimensional ball $B^d_r$ of radius $r$ we have
\[
\frac{I_u(B^d_r)}{\vol(B^d_r)} = \frac{r^2}{d+2}
\]
\end{lem}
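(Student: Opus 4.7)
The plan is to exploit the rotational invariance of the ball combined with the coordinate-permutation symmetry. Since $B^d_r$ is invariant under every orthogonal transformation fixing the origin, the value $I_u(B^d_r)$ depends on $u$ only through $\|u\|$; so without loss of generality I take $u$ to be a unit vector, and then by rotational invariance I may further assume $u=e_1$. The centroid $\bc(B^d_r)$ is $0$ by symmetry, hence
\[
I_u(B^d_r) = \int_{B^d_r} \langle x, e_1 \rangle^2\, dx = \int_{B^d_r} x_1^2\, dx.
\]

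Next I would use the permutation symmetry of the ball: interchanging coordinates is a linear isometry preserving $B^d_r$, so $\int_{B^d_r} x_i^2\, dx$ is independent of $i$. Summing over $i$ gives
\[
d \int_{B^d_r} x_1^2\, dx = \int_{B^d_r} \|x\|^2\, dx,
\]
so the problem reduces to computing $\int_{B^d_r} \|x\|^2\, dx$ and $\vol(B^d_r)$. Passing to spherical coordinates and letting $\omega_{d-1}$ denote the surface area of the unit sphere $\Sph^{d-1}$,
\[
\int_{B^d_r} \|x\|^2\, dx = \omega_{d-1} \int_0^r \rho^{d+1}\, d\rho = \frac{\omega_{d-1} r^{d+2}}{d+2}, \qquad \vol(B^d_r) = \frac{\omega_{d-1} r^d}{d}.
\]

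Taking the ratio,
\[
\frac{I_u(B^d_r)}{\vol(B^d_r)} = \frac1d \cdot \frac{\int_{B^d_r} \|x\|^2\, dx}{\vol(B^d_r)} = \frac1d \cdot \frac{d\, r^2}{d+2} = \frac{r^2}{d+2},
\]
which is the claim. There is no genuine obstacle here; the entire argument is symmetry plus a single-variable integration, and it makes transparent why the factor $d+2$ arises.
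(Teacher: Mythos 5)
Your argument is correct and is essentially the paper's own proof: both reduce $I_u$ to $\tfrac1d\int_{B^d_r}\|x\|^2\,dx$ by noting the moment is direction-independent and that the moments in $d$ orthogonal directions sum to the polar moment, then evaluate by radial integration. Your explicit normalization of $u$ to a unit vector is a harmless clarification of the convention the paper uses implicitly.
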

\begin{proof}
The moment $I_u(B^d_r)$ doesn't depend on $u$. The sum of the moments in $d$ pairwise orthogonal directions equals the polar moment $\int_{B^d_r} \|x\|^2\, dx$. Thus we have
\[
I_u(B^d_r) = \frac1{d} \int_0^r \int_{S^{d-1}_t} t^2\, dx\, dt = \frac{\omega_{d-1}}{d} \int_0^r t^{d+1}\, dt = \frac{\omega_{d-1}r^{d+2}}{d(d+2)} 
\]
where $S^{d-1}_t$ is the $(d-1)$-dimensional sphere of radius $t$, and $\omega_{d-1}$ is the volume of the unit $(d-1)$-sphere. On the other hand, $\vol(B_r) = \frac1{d} \omega_{d-1} r^d$, which leads to the formula of the lemma.
\end{proof}

\begin{proof}[Proof of Theorem \ref{thm:BodyVsBall}]
Similar to the proof of Theorem \ref{thm:BodyVsBody}, it suffices to show that
\[
\frac{I_u(\phi_y(B^d))}{\vol(\phi_y(B^d))} > \frac{I_u(\phi_y(K))}{\vol(\phi_y(K))}
\]
for all $y$.

The image of a ball under an admissible projective transformation is an ellipsoid. It is easily seen that the normalized moment of inertia $I_u/\vol$ of an ellipsoid equals to the normalized moment of a ball with diameter equal to the width of the ellipsoid in direction $u$:
\[
\frac{I_u(\phi_y(B^d))}{\vol(\phi_y(B^d))} = \frac{W_u(\phi_y(B^d))^2}{4(d+2)}
\]
Because of Lemmas \ref{lem:MomWidth} and \ref{lem:MomBall} we have
\begin{multline*}
\frac{I_u(\phi_y(K))}{\vol(\phi_y(K))} < \frac{W_u(\phi_y(B^d))^2}{4(d+2)} \Leftarrow \frac{W_u(\phi_y(K))^2}{12} < \frac{W_u(\phi_y(B^d))^2}{4(d+2)}\\
\Leftarrow \frac{W_u(\phi_y(K))}{W_u(\phi_y(B^d))} < \sqrt{\frac{3}{d+2}}
\end{multline*}
To ensure the latter inequality for all $u$, it suffices to require
\[
\crra(b_2, a_2; a_1, b_1) < \frac{(1 + \sqrt{\frac{3}{d+2}})^2}{(1 - \sqrt{\frac{3}{d+2}})^2}
\]
for all quadruples of parallel tangent hyperplanes to $\phi_y(B^d)$ and $\phi_y(K)$. This, in turn, is implied by the same inequality for concurrent tangent hyperplanes to $B^d$ and $K$.
\end{proof}

\begin{proof}[Proof of Theorem \ref{thm:BallVsBody}]
Without loss of generality, assume $0 \in \inn K$ (this may be achieved by a projective transformation that fixes $B$, and the Theorem is of projective nature).
By Section \ref{sec:CompPolar}, $K_y^\circ = \phi_y(K^\circ) + y$, so that
\[
\bc(K^\circ_y) = \bc(B^\circ_y) \Leftrightarrow \bc(\phi_y(K^\circ)) = \bc(\phi_y(B))
\]
Use the same method as in the proofs of Theorems \ref{thm:BodyVsBody} and \ref{thm:BodyVsBall}. The assumption of the theorem implies
\[
\width(K^\circ) < \log \frac{1+\sqrt{\frac{2}{d+1}}}{1-\sqrt{\frac{2}{d+1}}}
\]
which implies
\[
\frac{W_u(\phi_y(K^\circ))}{W_u(\phi_y(B))} < \sqrt{\frac{2}{d+1}}
\]
for all $y$, and in particular for critical points of the function $F$. Due to Lemmas \ref{lem:MomWidth} and \ref{lem:MomBall}, this implies that at the critical points $F$ is strictly convex. Thus by Lemma \ref{lem:Morse} the critical point is unique.
\end{proof}

\section{Open questions}
\label{sec:FutRes}
\subsection{Other dimensions}
We restricted our attention to the cases $\dim K_i \in \{0, d\}$ because it implies the affine covariance of the centroid \eqref{eqn:AffCovar}, which makes it possible to formulate the uniqueness problem (projective transformations modulo affine ones). For $\dim K = k \notin \{0, d\}$ the centroid is affinely covariant under some additional restrictions, for example if $K$ is centrally symmetric or if the affine span of $K$ has dimension $k$.

\begin{prb}
What is the most general class of $k$-dimensional subsets of $\R^d$ with affinely covariant centroids?
\end{prb}

Note that the affine covariance of the centroid of $K$ doesn't yet make the uniqueness question well-posed: in principle, one needs the affine covariance for all projective images of $K$.

\begin{prb}
Study the existence and uniqueness questions when $\dim K_i \notin \{0, d\}$ for at least one $i$. Do the solutions correspond to the critical points of some function $F$?
\end{prb}

\subsection{Point vs a body}
By Theorem \ref{thm:OneVsBody}, every point inside a cusp-free set becomes the centroid after some projective transformation.
Example \ref{exl:CounterPointBody} gives a point inside a set lying close to a sharp cusp so that no projective transformation can make it the centroid.

\begin{prb}
Weaken the cusp-freeness condition so that to preserve the existence of a projective transformation for any point in the interior. Is the following condition necessary and sufficient: the image of $K$ under any projective transformation that sends some support hyperplane of $K$ to infinity has infinite volume?
\end{prb}

\begin{prb}
If $K$ has sharp cusps, describe the set of all points that can become centroids. Is it convex? Is it related to the Dupin's floating body?
\end{prb}

We conjecture the following solution of the latter problem: for every support hyperplane $\ell$ of $K$ take a projective transformation $\phi_\ell$ that sends $\ell$ to infinity. If $\phi_\ell(K)$ has a finite volume, then cut off by a hyperplane parallel to $\ell$ half of the volume of $\phi_\ell(K)$ (remove the infinite part). The complement in $K$ to the preimages of all parts removed in this way is the set of points that can become the centroid.

\subsection{Several points vs a body}
In Examples \ref{exl:CounterBodyMany} and \ref{exl:d+1Points} we saw that the projective transformation fitting the centroids of a finite set and of a ``bigger'' $d$-dimensional set does not always exist, and if, then may be not unique. By contrast, if the $d$-dimensional set is a ball, then we have unconditional existence and uniqueness.

\begin{prb}
Does a projective transformation always exist, if through every point on the boundary of $\conv K$ goes exactly one support hyperplane?
\end{prb}

\begin{prb}
Is the projective transformation unique, if $K$ is ``round enough'' in some sense?
\end{prb}

\subsection{If no body is contained in the other}
Our method to prove the existence was to show that the function $F$ tends to $+\infty$ near the boundary of its domain. This was ensured by the assumption $K_2 \subset \inn\conv K_1$. It is possible to prove the existence of a critical point of $F$ under less restrictive assumptions, for example, if $d=2$ and the gradient of $F$ ``turns'' as we go along the boundary of domain of $F$. This means that a projective transformation exists if $K_2$ ``sticks out'' of $K_1$ in at least two places. One could formalize and generalize this argument by using the degree of the map from the boundary of the domain of $F$ to the $(d-1)$-dimensional sphere.

\begin{prb}
Give a sufficient condition for the existence of a projective transformation in the case when neither $K_1 \subset \conv K_2$ nor $K_2 \subset \conv K_1$.
\end{prb}

\end{document}